\theoremstyle{definition}
\newtheorem{definition}{Definition}[section]
\theoremstyle{remark}
\newtheorem{remark}[definition]{Remark}
\theoremstyle{plain}
\newtheorem{theorem}[definition]{Theorem}
\newtheorem{corollary}[definition]{Corollary}
\newtheorem*{theoremA}{Theorem A}
\newtheorem*{theoremB}{Theorem B}
\newtheorem{lemma}[definition]{Lemma}
\newcommand{\N}{\mathbb{N}}
\newcommand{\maxG}{\max\nolimits_{G}\hspace{.09 em}}
\DeclareMathOperator{\Aut}{Aut}
\DeclareMathOperator{\Cl}{Cl}
\title[Lower central words in finite $p$-groups]{Lower central words in finite $p$-groups}
\author{Iker de las Heras}
\address{Zientzia eta Teknologia Fakultatea, Matematika Saila, Euskal Herriko Unibertsitatea (UPV/EHU), Sarriena Auzoa z/g, 48940 Leioa, Spain.} 
\email{iker.delasheras@ehu.eus}
\author{Marta Morigi}
\address{Dipartimento di Matematica, Universit\`a di Bologna, 
Piazza di Porta San Donato 5, 40126 Bologna, Italy.}
\email{marta.morigi@unibo.it}
\begin{document}

\begin{abstract}
It is well known that the set of values of a lower central word in a group $G$ need not be a subgroup.
For a fixed lower central word $\gamma_r$ and for $p\ge 5$, Guralnick showed that if $G$ is a finite $p$-group such that the verbal subgroup $\gamma_r(G)$ is abelian and 2-generator, then $\gamma_r(G)$ consists only of $\gamma_r$-values.
In this paper we extend this result, showing that the assumption that $\gamma_r(G)$ is abelian can be dropped.
Moreover, we show that the result remains true even if $p=3$.
Finally, we prove that the analogous result for pro-$p$ groups is true.
\end{abstract}

\thanks{The first author is supported by the Spanish Government, grant MTM2017-86802-P, partly with FEDER funds, and by the Basque Government, grant IT974-16. He is also supported by a predoctoral grant of the University of the Basque Country. The second author is a member of INDAM}
\keywords{$p$-groups, commutators, lower central words}
\subjclass[2010]{20D15, 20F12, 20F14}

\maketitle

\section{Introduction}

A word $w$ in $k$ variables is an element of the free group $F_k$ with $k$ generators.
For any group $G$, this word can be seen as a map from the Cartesian product of $k$ copies of $G$ to the group $G$ itself by substituting group elements for the variables.
The image of this map is called the set of $w$-values of $G$ and it is denoted by $G_{w}$.
The subgroup generated by this set is called the verbal subgroup of $w$ in $G$ and is denoted by $w(G)$.

In this paper we will focus on the lower central words.
This words are defined recursively by the rule $\gamma_1(x_1)=x_1$ and
$$
\gamma_r(x_1,\ldots,x_r)=[\gamma_{r-1}(x_1,\ldots,x_{r-1}),x_r]
$$
for $r\ge 2$.
Thus, the verbal subgroup $\gamma_r(G)$ of the word $\gamma_r$ in a group $G$ coincides with the $r$-th term of the lower central series of $G$.
In this context, it is well known that the set of $\gamma_r$-values need not be a subgroup.
In other words, $G_{\gamma_r}$ may be a proper subset of $\gamma_r(G)$.


However, several families of groups have been found for which the equality $\gamma_r(G)=G_{\gamma_r}$ holds.
The study of this property started with the case $r=2$, that is, when the word $\gamma_r$ is the common commutator word and its verbal subgroup is just the derived subgroup of the group.
One of the main results in this case is the proof by Liebeck, O'Brien, Shalev and Tiep in \cite{LOST} of the 
so-called Ore Conjecture, according to which every finite simple group $G$ satisfies the condition $G'=G_{\gamma_2}$.

In the opposite direction, still in the case $r=2$, the result is also true for nilpotent groups with cyclic derived subgroup, as proved by Rodney in \cite{R}.
If, instead, we drop the nilpotency assumption, the result fails to hold.
Namely, in \cite{M}, Macdonald provides some examples of groups
$G$ with $G'$ cyclic and $G'\ne G_{\gamma_2}$.
For finite nilpotent groups, or, equivalently, for finite $p$-groups, Rodney addressed the simplest cases, showing that $G'=G_{\gamma_2}$ if $G'$ is $3$-generator and central or if $G'$ is elementary abelian of rank $3$ (\cite{R2}).
Guralnick extended Rodney's results proving that if $G'$ is abelian, then $G'=G_{\gamma_2}$ whenever $G'$ can be generated by $2$ elements (\cite[Theorem A]{G}) or whenever $G'$ can be generated by $3$ elements and $p\ge 5$ (\cite[Theorem B]{G}).
In addition, Guralnick himself showed that the result is no longer true if $G'$ is $3$-generator and $p=2$ or $p=3$ (\cite{G}, Example 3.5 and Example 3.6).

On this basis, the first author and G.\ A.\ Fern\'andez-Alcober in \cite{DF} and \cite{D} improved Guralnick's results, showing that the condition that $G'$ is abelian can be removed.
Moreover, Macdonald (\cite[Exercise 5, page 78]{M2}) and Kappe and Morse (\cite[Example 5.4]{KM2}) had already shown that for every prime $p$ there exist finite $p$-groups with 4-generator abelian derived subgroup such that $G'\neq G_{\gamma_2}$.
Therefore, for $r=2$, the study of this property for finite $p$-groups in terms of the number of generators of 
the derived subgroup is already completed.

For the case $r>2$, however, much less is known.
The first results were due to Dark and Newell in \cite{DN}, where they generalized Macdonald's and Rodney's results in \cite{M} and \cite{R} to lower central words.
So far, the main results in this context were proved by Guralnick: he showed in \cite{G2} and \cite{G3} that if $G$ is a finite $p$-group, $p\ge 5$, such that $\gamma_r(G)$ is 2-generator and abelian, then $\gamma_r(G)=G_{\gamma_r}$.
In addition, he found an example of a $2$-group such that $\gamma_r(G)\neq G_{\gamma_r}$, but the case $p=3$ remained unknown.

The goal of this paper is to generalize again Guralnick's result, showing that the condition that $\gamma_r(G)$ is abelian is not necessary.
Moreover, we prove that the result is also true if $p=3$, closing in that way the gap between the primes $2$ and $5$.

\begin{theoremA}
Let $G$ be a finite $p$-group and let $r\ge 2$.
If $\gamma_r(G)$ is cyclic or if $p$ is odd and $\gamma_r(G)$ can be generated with $2$ elements, 
then there exist $x_1,\ldots,x_{j-1},x_{j+1},\ldots,x_r\in G$ with $1\le j\le r$ such that
$$
\gamma_r(G)=\{[x_1,\ldots,x_{j-1},g,x_{j+1},\ldots,x_{r}]\mid g\in G\}.
$$
\end{theoremA}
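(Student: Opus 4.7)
The plan is to induct on the order of $\gamma_r(G)$. The base case $\gamma_r(G)=1$ is vacuous, so assume otherwise and pick a minimal normal subgroup $N$ of $G$ with $N\subseteq\gamma_r(G)$; since $G$ is a finite $p$-group, $N$ is central of order $p$. Applying the inductive hypothesis to $G/N$ (whose verbal subgroup $\gamma_r(G)/N$ is still cyclic, resp.\ $2$-generator with $p$ odd) yields an index $j$ and fixed entries $x_1,\ldots,\hat{x}_j,\ldots,x_r\in G$ for which the set
\[
S=\{[x_1,\ldots,x_{j-1},g,x_{j+1},\ldots,x_r]\mid g\in G\}
\]
satisfies $SN=\gamma_r(G)$. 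The bulk of the proof is the lifting step: to show that, after possibly adjusting the $x_i$'s, $S$ itself equals $\gamma_r(G)$, equivalently that $S$ is a union of $N$-cosets.

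For the lifting, I would exploit the standard fact that modulo $\gamma_{r+1}(G)$ the lower central word $\gamma_r$ is multilinear in each variable, so the induced map
\[
\phi\colon G\to \gamma_r(G)/\gamma_{r+1}(G),\qquad g\mapsto [x_1,\ldots,g,\ldots,x_r]\,\gamma_{r+1}(G),
\]
is a group homomorphism. The Burnside basis theorem applied to the $p$-group $\gamma_r(G)$, together with the generator hypothesis, allows one to arrange the fixed entries so that $\phi$ is surjective. The remaining task is then to sweep out each coset of $\gamma_{r+1}(G)$ in $\gamma_r(G)$ (and in particular each coset of $N$) by varying $g$ over the relevant fibre of $\phi$. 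Using the identity $[a,gh]=[a,h]\,[a,g]\,[[a,g],h]$ iteratively, the perturbation of $[x_1,\ldots,g_0,\ldots,x_r]$ produced by a kernel element $h$ is expressed as the product of a principal term lying in $N$ and a nested commutator correction lying in $\gamma_{r+1}(G)$.

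Both structural hypotheses enter at this point. When $\gamma_r(G)$ is cyclic, $N$ is forced to be its unique subgroup of order $p$, and the perturbation analysis reduces to a one-dimensional $p$-adic calculation that succeeds for every prime $p$. When $\gamma_r(G)$ is $2$-generator with $p$ odd, one controls the $p$-th power corrections via Hall--Petresco-type identities; the assumption $p\ge 3$ ensures that the relevant binomial coefficients appearing in the expansion of $(gh)^p$ are tractable, whereas the parallel step collapses for $p=2$, in line with Guralnick's known counterexample in that characteristic.

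The main obstacle is this lifting step in the non-abelian regime. In Guralnick's original abelian setting the correction $[[a,g_0],h]$ was controllable because it lay in a central abelian piece, but here $\gamma_{r+1}(G)$ need not be central in $\gamma_r(G)$, so coordinating this correction with the prescribed element of $N$ demands a careful commutator calculus that may have to branch according to the structure of $\gamma_r(G)/\gamma_{r+1}(G)$ (cyclic versus genuinely $2$-generator). The freedom granted by the theorem statement to choose any index $j\in\{1,\ldots,r\}$, rather than always varying the last entry, is precisely what supplies enough flexibility to make the lifting go through uniformly; identifying the correct $j$ in each configuration is, I expect, the most delicate part of the argument.
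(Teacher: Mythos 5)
There is a genuine gap --- in fact two, and they are precisely the two places where all the work in this theorem lives. First, you assert that ``the Burnside basis theorem applied to the $p$-group $\gamma_r(G)$, together with the generator hypothesis, allows one to arrange the fixed entries so that $\phi$ is surjective.'' This does not follow. Burnside tells you that $\gamma_r(G)$ is generated by $\gamma_r$-values modulo its Frattini subgroup, but the values $[x_1,\ldots,x_{j-1},g,x_{j+1},\ldots,x_r]$ with \emph{all but one} entry frozen form a very thin subset of all $\gamma_r$-values, and showing that for some choice of $j$ and of the frozen entries these already generate $\gamma_r(G)$ is the hardest structural step of the proof. In the paper this is Lemma \ref{lemma generation C=G} and Lemma \ref{lemma generation C neq G}, each proved by an induction on $r$ (not on $|\gamma_r(G)|$) that passes to quotients by centralizers $C_{\gamma_{r-1}(G)}(x_r)$ and requires the machinery of the subgroups $D_r(U)$, $E_r(U)$ and the dichotomy $C_r(G)=G$ versus $C_r(G)\neq G$; in the cyclic case it forces a specific choice of the position $j$ (maximal with $x_j\in C_G(\gamma_r(G)/\gamma_r(G)^4)$ when $p=2$). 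Your proposal treats this as a routine normalization.

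Second, the lifting step is only described, not carried out, and the filtration you choose makes it harder than it needs to be. An arbitrary minimal normal subgroup $N\le\gamma_r(G)$ has no useful relation to the power structure of $\gamma_r(G)$, so there is no mechanism forcing every element of $N$ to be a value $[y_1,\ldots,g,\ldots,y_r]$ (which is what the gluing lemma --- the paper's Lemma \ref{lemma linking} --- actually requires, and requires for every conjugate $y_i\in x_i^G$, a uniformity your induction does not track). The paper instead filters by $1=\gamma_r(G)^{p^s}\le U^{p^{s-1}}\le\gamma_r(G)^{p^{s-1}}\le\cdots\le\gamma_r(G)$ for a suitable intermediate $U$, precisely because the Hall--Petresco analysis (Lemmas \ref{lemma petrescoeasy}--\ref{lemma petrescohard}) yields the congruence $[x_1,\ldots,h,\ldots,x_r]^{p^k}\equiv[x_1,\ldots,h^{p^k},\ldots,x_r]\pmod{N^{p^k}}$, which sweeps out exactly these power-filtration factors as $g$ varies; no analogous statement is available for a randomly chosen central subgroup of order $p$. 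So while your overall skeleton (cover the factors of a normal series by values with one free slot, then glue via the separation identity) matches the paper's strategy, both of the substantive inputs --- the generation lemma and the power congruences along the correct filtration --- are missing from the proposal.
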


As in \cite{DF} and \cite{D}, we will also prove the analogous version of Theorem A for pro-$p$ groups.
In the case of a pro-$p$-group $G$,
$\gamma_r(G)$ denotes the topological closure of the subgroup generated by the set of all $\gamma_r$-values.

\begin{theoremB}
Let $G$ be a pro-$p$ group and let $r\ge 2$.
If $\gamma_r(G)$ is procyclic or if $p$ is odd and $\gamma_r(G)$ can be topologically generated 
with $2$ elements, 
then there exist $x_1,\ldots,x_{j-1},x_{j+1},\ldots,x_r\in G$ with $1\le j\le r$ such that
$$
\gamma_r(G)=\{[x_1,\ldots,x_{j-1},g,x_{j+1},\ldots,x_{r}]\mid g\in G\}.
$$
\end{theoremB}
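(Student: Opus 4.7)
The plan is to reduce Theorem~B to Theorem~A via a standard inverse limit argument, using the fact that a pro-$p$ group $G$ is the inverse limit $\varprojlim_{N} G/N$, where $N$ ranges over the open normal subgroups of $G$. For every such $N$, the quotient $G/N$ is a finite $p$-group with $\gamma_r(G/N) = \gamma_r(G) N / N$, so the hypothesis on $\gamma_r(G)$ transfers: if $\gamma_r(G)$ is procyclic, then $\gamma_r(G/N)$ is cyclic, and if $\gamma_r(G)$ is topologically $2$-generated with $p$ odd, then $\gamma_r(G/N)$ is $2$-generated. Theorem~A then supplies, for each $N$, an index $j_N \in \{1,\ldots,r\}$ and a tuple in $(G/N)^{r-1}$ realising $\gamma_r(G/N)$ as the set of commutators of the prescribed shape with the $j_N$-th entry varying.

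To upgrade these finitely many tuples to a single tuple in $G^{r-1}$, I first observe that being a good tuple is preserved under the natural projections: if $M \le N$ are open normal subgroups, then $G/M \twoheadrightarrow G/N$ restricts to a surjection $\gamma_r(G/M) \twoheadrightarrow \gamma_r(G/N)$, and the image of a good tuple for $G/M$ with index $j$ is a good tuple for $G/N$ with the same index $j$. A pigeonhole argument on the finite set $\{1,\ldots,r\}$ then produces a single index $j$ such that the finite set $T_N^j$ of good tuples for $G/N$ with that index is non-empty for every $N$. These sets form an inverse system of non-empty finite sets, so $\varprojlim_N T_N^j \ne \emptyset$, yielding a tuple $(x_1,\ldots,x_{j-1},x_{j+1},\ldots,x_r) \in G^{r-1}$ whose image in each $G/N$ is good.

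It remains to check that $S = \{[x_1,\ldots,x_{j-1},g,x_{j+1},\ldots,x_r] \mid g \in G\}$ equals $\gamma_r(G)$. The inclusion $S \subseteq \gamma_r(G)$ is immediate, and $S$ is closed in $G$ as the continuous image of the compact space $G$ under the iterated commutator map. For any $h \in \gamma_r(G)$ and any open normal $N$, the choice of the tuple gives $hN \cap S \ne \emptyset$, hence $h \in SN$; since $S$ is closed in the profinite topology, $\bigcap_N SN = S$, and therefore $h \in S$. The only genuinely non-routine step in this plan is the pigeonhole argument that eliminates the a priori dependence of the index $j$ on $N$; everything else is a standard profinite compactness reduction, so the main substance of Theorem~B is already contained in Theorem~A.
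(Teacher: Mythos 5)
Your proposal is correct and follows essentially the same route as the paper: reduce to Theorem A on the finite quotients $G/N$, fix a single index $j$ valid for all open normal $N$ by a finiteness argument (the paper realises your pigeonhole step by taking the maximal value of the minimal admissible index), extract one tuple by compactness (the paper uses the finite intersection property of the closed sets of good tuples in $G^{r-1}$, you use the non-emptiness of an inverse limit of finite non-empty sets), and conclude since the resulting set of values is closed and dense in $\gamma_r(G)$.
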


\textit{Notation and organization.\/}
Let $G$ be a group.
If $L$ is a normal subgroup of $G$, then $[L,_1\,G]=[L,G]$ denotes the subgroup generated by all commutators $[x,y]$ with $x\in L$ and $y\in G$, and we define recursively $[L,_{n+1}\,G]=[[L,_n\,G],G]$ for all $n\ge 1$.
If $H\le G$ and $x\in G$, then we set $[x,H]=\langle [x,h]\mid h\in H \rangle$.
Moreover, $H^n$ will denote the subgroup generated by all $n$-th powers of elements of $H$.
We denote the Frattini subgroup of $G$ by $\Phi(G)$ and if $G$ is finitely generated, $d(G)$ stands for the minimum number of generators of $G$.
Finally, if $G$ is a topological group, we write $\Cl_G(H)$ to refer to the topological closure of $H$ in $G$ and we write $H\trianglelefteq_{\mathrm{o}} G$ to denote that $H$ is an open normal subgroup of $G$.

We start with some general preliminary results in Section \ref{section preliminaries} that will be used frequently along the paper.
Then we split the proof of Theorem A into three sections, dealing separately with two different cases: first, in Section 
\ref{section cyclic case} we prove the result when $\gamma_r(G)$ is cyclic, and then, in Section \ref{section non-cyclic C=G} and Section \ref{section non-cyclic CneqG} we prove it when $d(\gamma_r(G))=2$ and $p$ is odd, making an additional distinction on the position of a certain subgroup inside the group.
However, the proof for the non-cyclic case in Section \ref{section non-cyclic C=G} and Section \ref{section non-cyclic CneqG} will require further preliminaries that will be developed in Section \ref{section preliminaries non-cyclic}.
Finally, we prove Theorem B in Section \ref{section profinite}.

\section{Preliminaries}
\label{section preliminaries}


Throughout the paper we will use freely the following
well-known commutator identities (see for instance \cite[5.1,5]{Rob}).

\begin{lemma}
\label{comm id}
Let $x,y,z$ be elements of a group.
Then:
\begin{enumerate}
 \item $[x,y]=[y,x]^{-1}$.
  \item $[xy,z]=[x,z]^y[y,z]$, and $[x,yz]=[x,z][x,y]^{z}$.
  \item $\left[x,y^{-1}\right]=[y,x]^{y^{-1}}$, and $\left[x^{-1},y\right]=[y,x]^{x^{-1}}$.
  \item $[x,y^{-1},z]^y\,[y,z^{-1},x]^z\,[z,x^{-1},y]^x=1$ (the Hall-Witt identity).
\end{enumerate}
\end{lemma}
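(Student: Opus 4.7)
The plan is to verify each identity by direct expansion of the definitions $[a,b]=a^{-1}b^{-1}ab$ and $a^b=b^{-1}ab$, followed by free cancellation. All four assertions are universal identities in the variables that appear, so it suffices to check them in the free group on those variables; no group-theoretic hypothesis needs to be invoked.

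Part (i) is immediate from $[y,x]^{-1}=(y^{-1}x^{-1}yx)^{-1}=x^{-1}y^{-1}xy$. For the first identity of (ii), I would compare $[xy,z]=y^{-1}x^{-1}z^{-1}xyz$ with the expansion $[x,z]^y[y,z]=y^{-1}(x^{-1}z^{-1}xz)y\cdot(y^{-1}z^{-1}yz)$; the internal block $zy\cdot y^{-1}$ cancels and the two sides coincide. The second formula of (ii) is verified symmetrically. For (iii), a one-line computation gives $[x,y^{-1}]=x^{-1}yxy^{-1}=y(y^{-1}x^{-1}yx)y^{-1}=[y,x]^{y^{-1}}$, and the companion formula is analogous.

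The Hall--Witt identity (iv) is the only portion that requires genuine care, and I expect it to be the main obstacle. My plan is to first apply (iii) to rewrite each of the three inner commutators $[x,y^{-1}]$, $[y,z^{-1}]$, $[z,x^{-1}]$ as a conjugate of the corresponding positive commutator, so that for example $[x,y^{-1},z]^y$ simplifies to $[[y,x],z^y]$, with the other two factors transforming cyclically. The claim then reduces to the cyclically symmetric assertion
$$
[y,x,z^y]\,[z,y,x^z]\,[x,z,y^x]=1,
$$
which I would verify by expanding each factor as a freely reduced word in $x^{\pm 1},y^{\pm 1},z^{\pm 1}$. The resulting product telescopes by repeated cancellations at the two junctions, propagating inward until the empty word is reached. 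The work is entirely symbolic bookkeeping, which is why this identity is traditionally quoted rather than reproved; in a published account I would defer to the reference \cite{Rob} already cited in the paper.
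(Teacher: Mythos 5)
Your proposal is correct, but note that the paper itself offers no proof of this lemma at all: it simply quotes these identities as well known and cites Robinson (5.1.5), so the direct expansion you describe is precisely the standard verification that the cited reference carries out. Your reduction of the Hall--Witt identity via part (iii) to the cyclic form $[y,x,z^y]\,[z,y,x^z]\,[x,z,y^x]=1$ is valid (since $[x,y^{-1}]^y=([y,x]^{y^{-1}})^y=[y,x]$), and the subsequent free cancellation does telescope to the empty word, so nothing is missing.
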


The next standard properties are consequences of the identities above and for the reader convenience we collect them in a lemma that will be often used without mentioning.

\begin{lemma}
\label{lemma prelim}
Let $G$ be a group.
Then:
\begin{enumerate}
 \item If $L$ and $N$ are two normal subgroups of $G$ and $n\in\N$, then $[L^n,N]\le [L,N]^n [L,N,L]$.
  \item If $L$ is a normal subgroup of $G$, then $[L,\gamma_i(G)]\le [L,_iG]$ for every $i\in\N$.
\end{enumerate}
\end{lemma}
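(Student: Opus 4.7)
For part (i), the approach is to expand commutators $[a, y]$ generating $[L^n, N]$ (with $a \in L^n$ and $y \in N$) by repeated use of Lemma \ref{comm id}. Writing $a$ as a product $l_1^n \cdots l_k^n$ with $l_i \in L$ (absorbing any signs into the $l_i$) and iterating the identity $[uv, z] = [u, z]^v [v, z]$, the commutator $[a, y]$ becomes a product of conjugates of the $[l_i^n, y]$ by elements of $L$; applying the same identity again yields $[l^n, y] = \prod_{k=0}^{n-1} [l, y]^{l^k}$. Since for any $c \in [L, N]$ and $l \in L$ one has $c^l = c \cdot [c, l]$ with $[c, l] \in [L, N, L]$, all such inner conjugations disappear modulo the normal subgroup $[L, N, L]$. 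Consequently $[a, y]$ reduces to $\prod_i [l_i, y]^n \in [L, N]^n$ modulo $[L, N, L]$, giving the claimed containment.

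For part (ii), the plan is induction on $i$. The base case $i = 1$ is just $[L, \gamma_1(G)] = [L, G] = [L,_1\,G]$. For the inductive step, assume $[L, \gamma_i(G)] \le [L,_i\,G]$ and apply the Three Subgroup Lemma --- a direct consequence of the Hall--Witt identity in Lemma \ref{comm id} (iv) --- to the triple of normal subgroups $\gamma_i(G)$, $G$, $L$, with respect to the normal subgroup $N = [L,_{i+1}\,G]$. Two of the three iterated commutators land in $N$: namely, $[L, \gamma_i(G), G] \le [[L,_i\,G], G] = N$ by the inductive hypothesis, and $[[L, G], \gamma_i(G)] \le [[L, G],_i\,G] = N$ by applying the inductive hypothesis with the normal subgroup $[L, G]$ in place of $L$. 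The Three Subgroup Lemma then gives $[L, \gamma_{i+1}(G)] = [\gamma_i(G), G, L] \le N$, closing the induction.

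The expected obstacle is minor, since both parts amount to routine commutator calculus of the sort that standardly appears in preliminaries. The point requiring most care is the bookkeeping in part (i): one must verify that $[L, N]^n [L, N, L]$ is genuinely a normal subgroup of $G$ (using normality of both factors, the second being automatic since $L, N \trianglelefteq G$ and the first following from $(c^g)^n = (c^n)^g$), and that all the conjugating elements produced when expanding via $[uv, z] = [u, z]^v [v, z]$ truly lie in $L$, so that the discarded commutators really do fall into $[L, N, L]$.
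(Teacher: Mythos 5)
Your proof is correct; the paper itself gives no argument for this lemma, stating only that the properties are standard consequences of the commutator identities in Lemma \ref{comm id}, and your expansion of $[l^n,y]$ modulo $[L,N,L]$ for part (i) and the Three Subgroup Lemma induction (applied simultaneously to all normal subgroups $L$, so that it can be invoked for $[L,G]$) for part (ii) are exactly the intended standard derivations.
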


We will also use without mentioning the fact that if $N\le L$ are two normal subgroups of $G$ such that 
$[L:N|=p^2$
then $[L,G,G]\le N$, while if $L/N$ is cyclic then $[L,_i\,G]\le L^{p^i}N$ for each $i\in \N$.

The following lemma is essentially the well-known Hall-Petresco Identity (see \cite[Appendix A.1]{berk}).
\begin{lemma}
\label{Hall-Petr}
Let $x,y$ be elements of a group and let $n\in \N$.
Then for each $i=2,\dots, n$ 
there exists $c_i\in \gamma_i(\langle
y,[x,y]\rangle)$ such that 
$$[x,y]^n=[x,y^n]c_2^{\binom n2}c_3^{\binom n3}\cdots c_n^{\binom nn}.$$\end{lemma}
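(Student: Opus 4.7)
The plan is to prove the identity by induction on $n$, working inside the subgroup $H=\langle y,[x,y]\rangle$ and setting $c=[x,y]$. The base case $n=1$ is immediate since both sides reduce to $c$ with no correction terms.

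For the inductive step, Lemma~\ref{comm id}(ii) gives $[x,y^n]=[x,y^{n-1}]\,c^{y^{n-1}}$, and hence
\[
[x,y^{n-1}]=[x,y^n]\cdot c^{-y^{n-1}}.
\]
Substituting this into the inductive hypothesis $[x,y]^{n-1}=[x,y^{n-1}]\prod_{i=2}^{n-1}d_i^{\binom{n-1}{i}}$, with $d_i\in\gamma_i(H)$, and then multiplying on the right by $c$ would yield
\[
[x,y]^n=[x,y^n]\cdot c^{-y^{n-1}}\prod_{i=2}^{n-1}d_i^{\binom{n-1}{i}}\cdot c.
\]
Modulo $\gamma_2(H)$ the trailing factor collapses to $c^{-1}c=1$, so it already sits in $\gamma_2(H)$.

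The hard part will be to rearrange this trailing factor into the canonical shape $\prod_{i=2}^{n}c_i^{\binom{n}{i}}$ with $c_i\in\gamma_i(H)$. For this I would iteratively expand $c^{y^{n-1}}=c\cdot[c,y^{n-1}]$ and unfold $[c,y^{n-1}]$ via the standard collection process, which produces successive contributions in each $\gamma_j(H)$ with exponent $\binom{n-1}{j-1}$, up to commutators of strictly higher weight. Combining these with the inductive terms $d_i^{\binom{n-1}{i}}$ and invoking Pascal's identity $\binom{n-1}{i-1}+\binom{n-1}{i}=\binom{n}{i}$ then supplies precisely the right exponents, while the higher-weight errors can be absorbed into the $c_j$ with larger $j$ by an upward induction on $i$. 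Alternatively, since $[x,y^n]=(cy^{-1})^n\,y^n$ by a short induction using Lemma~\ref{comm id}(ii), one may apply the classical Hall--Petresco identity of \cite[Appendix A.1]{berk} directly to the two elements $c$ and $y^{-1}$ of $H$, which performs the same bookkeeping in a single step.
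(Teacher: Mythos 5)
The paper does not actually prove this lemma: it is stated as ``essentially the well-known Hall--Petresco identity'' with a pointer to \cite[Appendix A.1]{berk}. Your closing ``alternatively'' sentence is exactly the reduction that justifies that citation, and it is correct and complete: since conjugation by $x$ is a homomorphism, $[x,y^n]=x^{-1}y^{-n}xy^n=(x^{-1}y^{-1}x)^ny^n=(cy^{-1})^ny^n$ with $c=[x,y]$, so the classical identity $c^n(y^{-1})^n=(cy^{-1})^n\prod_{i\ge 2}d_i^{\binom ni}$ with $d_i\in\gamma_i(\langle c,y^{-1}\rangle)=\gamma_i(H)$ yields $c^n=[x,y^n]\prod_{i\ge 2}\bigl(d_i^{y^n}\bigr)^{\binom ni}$, and $d_i^{y^n}\in\gamma_i(H)$ because $\gamma_i(H)$ is normal in $H$. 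That is the intended proof, and no induction is even needed for the identity $[x,y^n]=(cy^{-1})^ny^n$.

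Your main argument, the direct induction on $n$, has a genuine gap precisely at the step you yourself label the ``hard part''. After the (correct) reduction to the trailing factor $c^{-y^{n-1}}\prod_i d_i^{\binom{n-1}{i}}c$, you propose to combine, for each weight $i$, the new contribution of exponent $\binom{n-1}{i-1}$ with the inherited term $d_i^{\binom{n-1}{i}}$ via Pascal's identity. But this produces a single element of $\gamma_i(H)$ raised to the power $\binom ni$ only if the new weight-$i$ contribution is, modulo $\gamma_{i+1}(H)$, a power of the \emph{same} element $d_i$; for unrelated $u,v\in\gamma_i(H)$ the product $u^{\binom{n-1}{i-1}}v^{\binom{n-1}{i}}$ need not equal $c_i^{\binom ni}$ modulo $\gamma_{i+1}(H)$ for any $c_i$ (already in a free nilpotent group, where $\gamma_i/\gamma_{i+1}$ is free abelian, $\binom ni$-th roots need not exist). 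For the same reason the higher-weight ``errors'' cannot simply be ``absorbed into the $c_j$ with larger $j$'': the expression $c_j^{\binom nj}$ only sweeps out $\binom nj$-th powers modulo $\gamma_{j+1}(H)$. Controlling exactly these points is the whole content of the Hall--Petresco identity, so the inductive route as sketched assumes what it must prove; the reduction to the cited classical identity is the argument to keep.
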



Outer commutator words, also known under the name of multilinear commutator words, are words
obtained by nesting commutators, but using always different variables.
More formally, the word $w(x)=x$ in one variable is an outer commutator word; if $\alpha$ and $\beta$ are outer commutator words involving different variables then the word $w=[\alpha,\beta]$ is an outer commutator, and all outer commutator words are obtained in this way.
Thus, lower central words are particular instances of outer commutator words, and as Lemma \ref{lemma powerful} below shows, the verbal subgroup of such words in finite $p$-groups is powerful whenever it can be generated by $2$ elements.
Hence, the theory of powerful $p$-groups will be essential in this paper.
These groups are usually seen as a generalization of abelian groups since they satisfy, among others, the following properties:
\begin{enumerate}
    \item $\Phi(G)=G^p$. In particular $|G:G^p|=p^{d(G)}$.
    \item $d(H)\le d(G)$ for every $H\le G$.
    \item $G^p=\{g^p\mid g\in G\}$.
    \item If $G=\langle x_1,\ldots,x_n\rangle$, then $G^p=\langle x_1^p,\ldots,x_n^p\rangle$.
    \item The power map from $G^{p^{i-1}}/G^{p^i}$ to $G^{p^i}/G^{p^{i+1}}$ that sends $gG^{p^i}$ to $g^{p}G^{p^{i+1}}$ is an epimorphism for every $i\ge 0$.
\end{enumerate}
A background in such groups can be found, for instance, in \cite[Chapter 2]{DdSMS} or \cite[Chapter 11]{K}.

In order to prove Lemma \ref{lemma powerful} we first need the following result, which is a basic fact about finite $p$-groups.

\begin{lemma}
\label{lemma inclusion}
Let $G$ be a finite $p$-group and $N,K$ normal subgroups of $G$.
If $N\le KN^p[N,G]$, then $N\le K$.
\end{lemma}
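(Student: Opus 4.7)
The plan is to reduce the statement to a standard fact about finite $p$-groups by passing to $\bar G := G/K$. Writing $\bar N := NK/K$, the desired conclusion $N \le K$ is equivalent to $\bar N = 1$. Since taking $p$-th powers and forming commutators commute with the quotient map, projecting the hypothesis $N \le KN^p[N,G]$ into $\bar G$ yields
$$\bar N \le \bar N^p[\bar N, \bar G],$$
while the reverse inclusion holds because $\bar N$ is normal in $\bar G$. Hence
$$\bar N = \bar N^p[\bar N, \bar G].$$

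The task thus reduces to showing that no nontrivial normal subgroup of a finite $p$-group can satisfy such an equation. The plan here is to derive a contradiction by choosing a $\bar G$-invariant subgroup $L$ of $\bar N$ that is maximal among proper ones; such an $L$ exists by finiteness of $\bar N$. The quotient $\bar N/L$ is then a minimal normal subgroup of the finite $p$-group $\bar G/L$, so by a standard argument it is central in $\bar G/L$ and has order $p$. In particular $[\bar N,\bar G] \le L$ and $\bar N^p \le L$, so
$$\bar N^p[\bar N, \bar G] \le L \lneq \bar N,$$
contradicting the displayed equation. Therefore $\bar N = 1$, as required.

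I do not anticipate any real obstacle. The lemma is in the spirit of the Burnside basis theorem applied to the $G$-module structure on $N$, and the only step deserving a moment of care is verifying the standard fact that minimal normal subgroups of a finite $p$-group are central of order $p$, which in turn follows from an elementary $p$-group action argument.
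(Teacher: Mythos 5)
Your proof is correct and follows essentially the same route as the paper: factor out $K$ and observe that a nontrivial normal subgroup $N$ of a finite $p$-group cannot satisfy $N=N^p[N,G]$. The paper simply asserts this last properness as standard, while you supply the (correct) justification via a maximal $G$-invariant proper subgroup and the fact that minimal normal subgroups of a finite $p$-group are central of order $p$.
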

\begin{proof}
Factor out $K$ and just note that if $N$ is non-trivial, then $N^p[N,G]$ is a proper subgroup of $N$, which is a contradiction.
\end{proof}

\begin{lemma}
\label{lemma powerful}
Let $G$ be a finite $p$-group and $w$ an outer commutator word.
If $d(w(G))=2$, then $w(G)'\le w(G)^{p^2}$.
In particular $w(G)$ is powerful.
\end{lemma}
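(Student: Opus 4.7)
The plan is to induct on $|G|$ and use Lemma \ref{lemma inclusion} as the main algebraic tool. Set $W=w(G)$. Since the image of $G_w$ in $W/\Phi(W)$ spans this two-dimensional $\F_p$-vector space, we may pick $a_1,a_2\in G_w$ with $W=\langle a_1,a_2\rangle$; standard commutator calculus then shows that $W'/[W',W]$ is cyclic, generated by $[a_1,a_2]$. Applying Lemma \ref{lemma inclusion} to the characteristic subgroups $W'$ and $W^{p^2}$ of $G$, the problem reduces to showing $[a_1,a_2]\in M:=W^{p^2}(W')^p[W',G]$, since $[W',W]\le [W',G]$ and $M$ is $G$-invariant. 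Once $W'\le W^{p^2}$ is established, $W$ is powerful because $W^{p^2}\le W^p$ for $p$ odd and $W^{p^2}=W^4$ for $p=2$.

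For the inductive step I first factor out the characteristic subgroup $W'\cap W^{p^2}$, after which I may assume $W'\cap W^{p^2}=1$. Next, taking a minimal $G$-normal subgroup $M\le W$ of order $p$ (which lies in $Z(G)$), either $M\not\le\Phi(W)$ — which makes $W/M$ cyclic with $M\le Z(W)$, forcing $W$ abelian and $W'=1$ — or $M\le\Phi(W)$ and induction on $G/M$ gives $W'\le W^{p^2}M$. Combined with the previous reduction and the contradiction hypothesis $W'\not\le W^{p^2}$, this forces $|W'|=p$, so $W$ has nilpotency class $2$, $(W')^p=1$, and $[W',G]=1$ (the latter because $G$ is a $p$-group acting on a group of order $p$, so the action is trivial). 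In this reduced setting the target inclusion collapses to $[a_1,a_2]\in W^{p^2}$.

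To finish, one exploits the outer commutator structure of $w$. Writing $w=[u,v]$ for outer commutators $u,v$ on disjoint variables, express $a_1=[b,c]$ with $b\in G_u$ and $c\in G_v$; for any $g\in G$, $b^g\in G_u$ and $c^g\in G_v$, so $[b^{g_1},c^{g_2}]$ is again a $w$-value. A combination of the Hall--Witt identity applied to the triple $b,c^{-1},a_2$ with the Hall--Petresco identity to expand $p$-th powers of commutators (using $\binom{p}{i}\equiv 0\pmod p$ for $1\le i\le p-1$), together with the class-$2$ bilinearity of the commutator map $W/W'\times W/W'\to W'$, lets one rewrite $[a_1,a_2]$ as a product of $p^2$-th powers of elements of $W$, contradicting $W'\cap W^{p^2}=1$.

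The main obstacle is this last step: the reductions are essentially routine commutator algebra together with applications of Lemma \ref{lemma inclusion}, but translating the outer commutator structure of $w$-values into an explicit expression for $[a_1,a_2]$ as a product of $p^2$-th powers requires delicate Hall--Witt combinatorics and careful bookkeeping of the conjugation closures of $G_u$, $G_v$, and $G_w$, and is the real content of the proof.
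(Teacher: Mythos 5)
Your reductions in the first two paragraphs are essentially sound: choosing $a_1,a_2\in G_w$ generating $W=w(G)$, noting $W'=\langle[a_1,a_2]\rangle[W',W]$, invoking Lemma \ref{lemma inclusion}, and cutting down to the case where $W$ has class $2$, $|W'|=p$, $(W')^p=[W',G]=1$ and $W'\cap W^{p^2}=1$ all check out. But the proof stops exactly where the difficulty begins. The claim that the Hall--Witt identity applied to $b,c^{-1},a_2$ together with Hall--Petresco ``lets one rewrite $[a_1,a_2]$ as a product of $p^2$-th powers of elements of $W$'' is not an argument: you give no mechanism by which $p^2$-th powers would appear (Hall--Petresco with exponent $p$ produces only $p$-th powers and the coefficients $\binom{p}{i}$), and the Hall--Witt terms such as $[c,a_2^{-1},b]$ land in subgroups like $[v(G),w(G),u(G)]$ that have no evident relation to $W^{p^2}$ in your reduced setting. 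For $w=\gamma_2$ the statement you defer to this final step is precisely Theorem 1 of \cite{B} (Blackburn), a substantial result which the paper cites rather than reproves; your outline would have to reconstruct its proof and does not. So there is a genuine gap, and it is the entire content of the lemma, as you yourself acknowledge in the closing sentence.

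For contrast, the paper's proof avoids this computation altogether. After disposing of $w=\gamma_2$ by citing Blackburn, it observes that every other outer commutator word satisfies $w(G)\le\gamma_3(G)$, whence $w(G)'=[w(G),w(G)]\le[w(G),\gamma_3(G)]\le[w(G),G,G,G]$; the hypothesis $|w(G):\Phi(w(G))|=p^2$ forces $[w(G),G,G]\le\Phi(w(G))$, and a short case analysis on whether or not $[w(G),G]\le\Phi(w(G))$ then annihilates the relevant repeated commutator. Your approach never exploits the depth of $w(G)$ in the lower central series of $G$, which is the lever that makes the paper's argument short; without it you are forced back onto the hard Blackburn-type commutator calculus that you have not supplied. (A minor side remark, affecting the statement rather than your proof: the one-variable word $w=x$ must be tacitly excluded, since for it the conclusion fails already for dihedral $2$-groups; both your decomposition $w=[u,v]$ and the paper's reduction to $w(G)\le\gamma_3(G)$ implicitly assume $w$ has at least two variables.)
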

\begin{proof}
By Theorem 1 of \cite{B} the result is true if $w$ is the commutator word, so we assume $w(G)\le\gamma_3(G)$.
In order to show that $w(G)'\le w(G)^{p^2}$ we may assume that $w(G)^{p^2}=1$, and by Lemma \ref{lemma inclusion} we can also assume $[w(G)',G]=(w(G)')^p=1$.

Since $d(w(G))=2$ we have $|w(G):\Phi(w(G))|=p^2$, and so $[w(G),G,G]\le\Phi(w(G))$.
Observe first that
$$
[\Phi(w(G)),w(G)]=[w(G)^pw(G)',w(G)]\le(w(G)')^p[w(G)',w(G)]=1,
$$
so in particular $\Phi(w(G))$ is abelian and $\Phi(w(G))^p=(w(G)^p)^p(w(G)')^p=w(G)^{p^2}=1$.
Moreover,
\begin{equation}\label{eq}
    \begin{split}
    [\Phi(w(G)),G]&=[w(G)^pw(G)',G]=[w(G)^p,G][w(G)',G]\\
    &\le[w(G),G]^p[w(G),G,w(G)].
    \end{split}
\end{equation}
We consider now two cases in turn: $[w(G),G]\le\Phi(w(G))$ and $[w(G),G]\not\le\Phi(w(G))$.

If $[w(G),G]\le\Phi(w(G))$, then by (\ref{eq}) we have
\begin{align*}
    [\Phi(w(G)),G]&\le[w(G),G]^p[w(G),G,w(G)]\\
    &\le\Phi(w(G))^p[\Phi(w(G)),w(G)]=1.
\end{align*}
Hence,
\begin{align*}
w(G)'&=[w(G),w(G)]\le [w(G),\gamma_3(G)]\\
&\le[w(G),G,G,G]\le[\Phi(w(G)),G]=1,
\end{align*}
as desired.

Suppose now $[w(G),G]\not\le\Phi(w(G))$.
By (\ref{eq}), we have
\begin{align*}
    [w(G),G,G,G,G]&\le[\Phi(w(G)),G,G]\\
    &\le [[w(G),G]^p[w(G),G,w(G)],G]\\
    &\le [w(G),G,G]^p[w(G),G,G,G,G,G]\\
    &\le\Phi(w(G))^p[w(G),G,G,G,G,G]\\
    &=[w(G),G,G,G,G,G],
\end{align*}
so $[w(G),G,G,G,G]=1$.
In addition, the quotient group
$$
w(G)/[w(G),G]\Phi(w(G))
$$
is cyclic.
Hence,
\begin{align*}
    w(G)'&=[w(G),[w(G),G]\Phi(w(G))]\\
    &\le[w(G),G,G,G,G]=1,
\end{align*}
and the proof is complete.
\end{proof}

Therefore, as we will deal with $2$-generator verbal subgroups, we will always assume that $\gamma_r(G)$ is powerful.
Moreover, the next lemma, proved in Lemma \ref{lemma prelim} of \cite{DF}, shows that actually all the subgroups of 
$\gamma_r(G)$  are also powerful.

\begin{lemma}
\label{lemma powerful subgroups}
Let $G$ be a powerful $p$-group.
If $d(G)=2$, then every subgroup $H$ of $G$ is also powerful.
\end{lemma}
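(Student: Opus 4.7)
The plan is to induct on $|G|$. By property (ii) of powerful $p$-groups listed in the paragraph before Lemma~\ref{lemma inclusion}, every subgroup $H \leq G$ satisfies $d(H) \leq d(G) = 2$. When $d(H) \leq 1$ the subgroup $H$ is cyclic, hence powerful, so we may assume $d(H) = 2$ and $H \neq G$; the task then reduces to showing $[H,H] \leq H^p$ (or $[H,H] \leq H^4$ in the case $p=2$, the argument being formally identical).

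For the inductive step the key observation is that, since $d(H) = 2$ and $H < G$, Burnside's basis theorem rules out $H \Phi(G) = G$ (otherwise the generators of $H$ would already generate $G$), so $H$ is contained in some maximal subgroup $M$ of $G$. Such an $M$ contains $\Phi(G) = G^p$ and has $|G:M|=p$, so in particular $d(M) \leq 2$; assuming one can show that $M$ is still powerful, the inductive hypothesis applied to the smaller powerful 2-generator $p$-group $M$ immediately yields that $H$ is powerful.

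The main obstacle is the verification that a maximal subgroup $M$ of $G$ containing $\Phi(G)$ is itself powerful. The inclusion $[M,M] \leq [G,G] \leq G^p$ is immediate; the real content lies in promoting it to $[M,M] \leq M^p$. This is where the 2-generator hypothesis bites: one can choose generators $a, b$ of $G$ with $b \in M$, so that $M = \langle b\rangle G^p$, and then combine Lemma~\ref{lemma prelim}(i), the Hall--Petresco identity (Lemma~\ref{Hall-Petr}), and the description $G^p = \langle a^p, b^p\rangle$ (property (iv)) to rewrite the commutators $[b, g^p]$, for $g \in G$, as $p$-th powers of elements of $M$ modulo deeper terms of the lower central series. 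A descending induction on the nilpotency class, completed with the help of Lemma~\ref{lemma inclusion}, then produces every element of $[M,M]$ inside $M^p$, closing the argument.
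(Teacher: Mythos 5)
The paper does not actually prove this statement: it is imported wholesale from \cite{DF}, with only a citation, so there is no in-paper argument to measure yours against. Judged on its own terms, your outline is sound and completable, but three points need care. First, the reduction is fine: by induction on $|G|$ it suffices to show that a maximal subgroup $M\supseteq\Phi(G)$ is again powerful (it automatically satisfies $d(M)\le 2$, e.g.\ by property (ii) or because $M=\langle b\rangle G^p=\langle b,a^p\rangle$). Second, with $M=\langle b\rangle G^p$ the subgroup $[M,M]$ is the normal closure of $[b,G^p]$ \emph{together with} $[G^p,G^p]$; you only treat the first family. The second is easy --- $[G^p,G^p]\le[G^p,G]\le G^{p^2}\le M^p$ --- but it does require the fact that $G^p$ is powerfully embedded, i.e.\ $[G^p,G]\le G^{p^2}$; note that Lemma~\ref{lemma prelim}(i) alone is circular here, since it only yields $[G^p,G]\le (G')^p\gamma_3(G)$ while $\gamma_3(G)\le[G^p,G]$. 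The same remark disposes of your ``deeper terms'': writing $x\in G^p$ as $x=g^p$ by property (iii) and applying Lemma~\ref{Hall-Petr}, one gets $[b,g^p]=[b,g]^pc^{-1}$ with $c\in\gamma_2(\langle g,[b,g]\rangle)\le\gamma_3(G)\le G^{p^2}\le M^p$, and $[b,g]\in G'\le G^p\le M$ gives $[b,g]^p\in M^p$; this is cleaner and safer than the descending induction you gesture at. Third, the case $p=2$ is not ``formally identical'': there the target is $[M,M]\le M^4$, and to place $[b,g]^2$ in $M^4$ you need the extra observation that $[b,g]\in G'\le G^4=\{g^4\mid g\in G\}$, so $[b,g]=h^4$ and $[b,g]^2=(h^2)^4$ with $h^2\in G^2\le M$. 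With these details supplied, your argument is correct.
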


The following result is a particular case of Lemma 3.1 of \cite{D}, where it is proved more generally for potent $p$-groups.

\begin{lemma}
\label{lemma index}
Let $G$ be a powerful $p$-group with $p\ge 3$.
If $N\le L$ are two normal subgroups of $G$, then $|N:N^{p^i}|\le|L:L^{p^i}|$ for all $i\ge 0$.
In particular $|L^{p^i}:N^{p^i}|\le |L:N|$.
\end{lemma}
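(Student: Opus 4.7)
The two inequalities in the statement are in fact equivalent: the identity
$|L:L^{p^i}|\cdot|L^{p^i}:N^{p^i}|=|L:N^{p^i}|=|L:N|\cdot|N:N^{p^i}|$
shows that $|N:N^{p^i}|\le|L:L^{p^i}|$ holds if and only if $|L^{p^i}:N^{p^i}|\le|L:N|$. I will therefore focus on establishing the first inequality.

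I would argue by induction on $i$, the case $i=0$ being immediate. For the step from $i$ to $i+1$, the factorization
$|N:N^{p^{i+1}}|=|N:N^{p^i}|\cdot|N^{p^i}:N^{p^{i+1}}|$
(and the analogous one for $L$), combined with the induction hypothesis, reduces the task to proving
$|N^{p^i}:N^{p^{i+1}}|\le|L^{p^i}:L^{p^{i+1}}|$.
But $N^{p^i}$ and $L^{p^i}$ are characteristic in the normal subgroups $N$ and $L$ of $G$, and hence themselves normal in $G$, so this is exactly the $i=1$ case of the lemma applied to the pair $N^{p^i}\le L^{p^i}$. Consequently, the whole statement reduces to proving $|N:N^p|\le|L:L^p|$ for any pair of normal subgroups $N\le L$ of a powerful $p$-group $G$ with $p\ge 3$.

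For the case $i=1$, my plan is to exploit the power-map structure of the powerful $p$-group $G$. Since $p$ is odd and $G$ is powerful, the assignment $g\mapsto g^p$ induces a surjective homomorphism $G/G^p\to G^p/G^{p^2}$, and more generally between consecutive terms of the $p$-power series; the Hall--Petresco identities together with powerfulness of $G$ ensure that $(xy)^p\equiv x^py^p$ modulo a sufficiently small subgroup absorbed in higher $p$-powers. The idea is to use this to lift a composition series of $N/N^p$ through the subgroup $NL^p/L^p\le L/L^p$ without loss of length, so that every ``new'' chief factor of $N/N^p$ (with respect to the chain $N^p\le N\cap L^p\le N$) is matched by a distinct chief factor of $L/L^p$ lying above $NL^p$.

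The main obstacle is precisely this $i=1$ step. The delicate point is that the normal subgroups $N$ and $L$ need not themselves be powerful, so powerfulness must be applied through their embedding in $G$ rather than intrinsically. This is exactly what is carried out, in the more general framework of potent $p$-groups, in Lemma 3.1 of \cite{D}, and the same technique applies here.
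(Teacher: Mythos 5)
The paper does not actually prove this lemma: it is recorded as a special case of Lemma 3.1 of \cite{D}, proved there for potent $p$-groups, and no argument is given. Your proposal ends in the same place --- the substantive inequality is ultimately deferred to that same reference --- so what has to be judged is the scaffolding you erect around the citation, and that scaffolding has a genuine gap.

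The reduction to the case $i=1$ does not work as stated. Applying the case $i=1$ to the pair $N^{p^i}\le L^{p^i}$ yields $|N^{p^i}:(N^{p^i})^p|\le|L^{p^i}:(L^{p^i})^p|$, whereas your induction needs $|N^{p^i}:N^{p^{i+1}}|\le|L^{p^i}:L^{p^{i+1}}|$. Only the inclusion $N^{p^{i+1}}\le(N^{p^i})^p$ is automatic, so $|N^{p^i}:(N^{p^i})^p|$ is a priori \emph{smaller} than $|N^{p^i}:N^{p^{i+1}}|$ and the inequality you obtain bounds the wrong quantity. The missing identity $(N^{p^i})^p=N^{p^{i+1}}$ would follow if every element of $N^{p^i}$ were a $p^i$-th power of an element of $N$; but, as you yourself observe, $N$ need not be powerful, and this ``power abelian'' property of normal subgroups of powerful (more generally, potent) $p$-groups for odd $p$ is a theorem of Gonz\'alez-S\'anchez and Jaikin-Zapirain of essentially the same depth as the lemma itself --- indeed, once one has it, together with $|N:N^{p^i}|=|\Omega_i(N)|$, the lemma is immediate from $\Omega_i(N)\le\Omega_i(L)$ and no induction on $i$ is needed. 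Your treatment of the case $i=1$ is, moreover, only a gesture: surjectivity of the power map $G/G^p\to G^p/G^{p^2}$ concerns $G$ itself, not the pair $N\le L$, and the proposed lifting of a composition series through $NL^p/L^p$ is never carried out, so no mechanism forcing $|N:N^p|\le|L:L^p|$ is actually exhibited. The one part that is complete and correct is the equivalence of the two displayed inequalities via $|L:N^{p^i}|=|L:L^{p^i}|\,|L^{p^i}:N^{p^i}|=|L:N|\,|N:N^{p^i}|$.
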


In order to prove Theorem A we will construct a series of subgroups from $\gamma_r(G)$ to $1$ with the property that every element of each factor group of two consecutive subgroups in the series can be written as a $\gamma_r$-value in a suitable way.
Lemma \ref{lemma linking} below will then allow us to go up in this series, proving that actually all the subgroups in the series consist of $\gamma_r$-values, until we reach $\gamma_r(G)$.
The key part of the proof is the following lemma, which is a generalization to outer commutator words of Lemma 2.1 in \cite{AS}.

\begin{lemma}
\label{lemma separation}
Let $G$ be a group and let $w$ be an outer commutator word in $r$ variables.
Let $y_1,\ldots,y_{j-1},h,y_{j+1},\ldots,y_r\in G$.
Then there exist
$h_1,\ldots,h_r\in\langle h\rangle^G$ such that for every $g\in G$,
\begin{equation*}
\begin{split}
    w(y_1&,\ldots,y_{j-1},gh,y_{j+1},\ldots,y_r)\\
    &=w(y_1^{h_1},\ldots,y_{j-1}^{h_{j-1}},g^{h_j},y_{j+1}^{h_{j+1}},\ldots,y_r^{h_r})w(y_1,\ldots,y_{j-1},h,y_{j+1},\ldots,y_r).
\end{split}
\end{equation*}
\end{lemma}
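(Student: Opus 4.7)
The plan is to prove the lemma by induction on the structural complexity of the outer commutator word $w$. The base case $w(x_1)=x_1$ holds trivially with $h_1=1$, since $gh = g^{h_1}\cdot h$. For the inductive step, write $w=[\alpha,\beta]$, where $\alpha$ and $\beta$ are outer commutators on disjoint variable sets, say $\{x_1,\ldots,x_k\}$ and $\{x_{k+1},\ldots,x_r\}$. Exactly one of $\alpha,\beta$ contains the variable $x_j$, and this splits the argument into two cases.

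Suppose first that $x_j$ lies among the variables of $\alpha$. By the inductive hypothesis applied to $\alpha$, there exist $a_1,\ldots,a_k\in\langle h\rangle^G$ with
$\alpha(y_1,\ldots,gh,\ldots,y_k)=A\cdot H_\alpha$,
where $A=\alpha(y_1^{a_1},\ldots,g^{a_j},\ldots,y_k^{a_k})$ and $H_\alpha=\alpha(y_1,\ldots,h,\ldots,y_k)$. Setting $B=\beta(y_{k+1},\ldots,y_r)$ and expanding with Lemma \ref{comm id}(ii),
$w(y_1,\ldots,gh,\ldots,y_r)=[A\cdot H_\alpha,B]=[A,B]^{H_\alpha}\cdot[H_\alpha,B]=[A^{H_\alpha},B^{H_\alpha}]\cdot[H_\alpha,B]$.
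The second factor is exactly $w(y_1,\ldots,h,\ldots,y_r)$, and the first is $w$ evaluated at $y_i^{h_i}$ with $h_i=a_iH_\alpha$ for $i\le k$ and $h_i=H_\alpha$ for $i>k$. The case $x_j\in\beta$ is analogous but needs a rearrangement: the inductive hypothesis gives $\beta(y_{k+1},\ldots,gh,\ldots,y_r)=B'\cdot H_\beta$, and Lemma \ref{comm id}(ii) produces $[A,B'\cdot H_\beta]=[A,H_\beta]\cdot[A,B']^{H_\beta}$, where the factors appear in the wrong order. Using the identity $uv=v^{u^{-1}}u$, rewrite this as $[A,B']^{C}\cdot[A,H_\beta]$ with $C=H_\beta[A,H_\beta]^{-1}$, and distribute $C$ through the arguments of $\alpha$ and $\beta$ to reach the required form.

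Both constructions rest on the auxiliary fact---itself proved by a short structural induction on $w$---that any evaluation of an outer commutator word at a tuple containing $h$ in some slot lies in $\langle h\rangle^G$ (the inductive step uses $[c,d]=c^{-1}c^d$ together with the normality of $\langle h\rangle^G$). This guarantees $H_\alpha,H_\beta\in\langle h\rangle^G$ and, since $\langle h\rangle^G$ is normal, also $[A,H_\beta]\in\langle h\rangle^G$, so all the conjugators produced above remain in $\langle h\rangle^G$. Crucially, none of the $a_i$, $b_i$, $H_\alpha$, $H_\beta$ or $C$ depend on $g$, so the $h_i$ one extracts are uniform in $g$, as required. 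The main obstacle is essentially bookkeeping: tracking how conjugators accumulate across the nested commutators and, in the second case, performing the swap of factors without losing control of the fact that the resulting conjugator sits in $\langle h\rangle^G$.
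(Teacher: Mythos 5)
Your proof is correct and follows essentially the same route as the paper: induction on the structure of $w=[\alpha,\beta]$, applying the hypothesis to the component containing the $j$-th variable and then swapping the two factors at the cost of a conjugation lying in $\langle h\rangle^G$. Your conjugator $C=H_\beta[A,H_\beta]^{-1}$ simplifies to $H_\beta^{A}$, which is exactly the element $z_2^{\alpha(y_1,\ldots,y_k)}$ appearing in the paper's computation, so the two arguments coincide (the paper merely treats only the case where the variable sits in $\beta$ and declares the other case similar, which you work out explicitly).
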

\begin{proof}
We proceed by induction on the number of variables appearing in the outer commutator word $w$.
If such number is $1$, i.e. if $w=x$, then the result is obvious.
Hence, assume $w=[\alpha,\beta]$, where $\alpha$ and $\beta$ are outer commutator words involving $k$ and $r-k$ variables with $k<r$, respectively.
Assume also that $j> k$, so that

\begin{equation*}
\begin{split}
    w(y_1&,\ldots,y_{j-1},gh,y_{j+1},\ldots,y_r)\\
    &=[\alpha(y_1,\ldots,y_k),\beta(y_{k+1},\ldots,y_{j-1},gh,y_{j+1},\ldots,y_r)].
\end{split}
\end{equation*}
By induction, we have
\begin{equation*}
\begin{split}
    \beta&(y_{k+1},\ldots,y_{j-1},gh,y_{j+1},\ldots,y_r)\\
    &=\beta(y_{k+1}^{h_1},\ldots,y_{j-1}^{h_{j-1}},g^{h_j},y_{j+1}^{h_{j+1}},\ldots,y_r^{h_r})
    \beta(y_{k+1},\ldots,y_{j-1},h,y_{j+1},\ldots,y_r),
\end{split}
\end{equation*}
where $h_{k+1},\ldots,h_r\in\langle h\rangle^G$.

For simplicity, write $z_1=\beta(y_{k+1}^{h_1},\ldots,y_{j-1}^{h_{j-1}},g^{h_j},y_{j+1}^{h_{j+1}},\ldots,y_r^{h_r})$, 
$z_2=\beta(y_{k+1},\ldots,y_{j-1},h,y_{j+1},\ldots,y_r)$, and notice that
\begin{equation*}
\begin{split}
&[\alpha(y_1,\ldots,y_k),z_1z_2]=[\alpha(y_1,\ldots,y_k),z_2][\alpha(y_1,\ldots,y_k),z_1]^{z_2}\\
&=[\alpha(y_1,\ldots,y_k),z_1]^{z_2^{\alpha(y_1,\ldots,y_k)}}[\alpha(y_1,\ldots,y_k),z_2].
\end{split}
\end{equation*}
Since clearly $z_2\in\langle h\rangle^G$, the result follows.

The case $j\le k$ is similar.
\end{proof}

The following result is an easy consequence of Lemma \ref{lemma separation}; it is also proved in \cite[Proposition 1.2.1]{S}.

\begin{corollary}
\label{corollary separation}
Let $G$ be a group.
Then, for every $i=1,\dots,n$ and for every $g,g_1,\dots,g_{i-1},g_{i+1},\dots g_n\in G$, $h\in \gamma_s(G)$ we have
  \begin{equation*}
      \begin{split}
      [g_1,\dots,g_{i-1},gh,g_{i+1},\dots,g_n]&\equiv\\
      [g_1,\dots,g_{i-1},g,g_{i+1},\dots ,g_n&][g_1,\dots,g_{i-1},h,g_{i+1},\dots,g_n]\pmod{\gamma_{n+s}(G)}.
    \end{split}
  \end{equation*}
  In particular, if $h\in G'$ then
  \begin{equation*}
      \begin{split}
      [g_1,\dots,g_{i-1},gh,g_{i+1},\dots,&g_n]\equiv\\
      &[g_1,\dots,g_{i-1},g,g_{i+1},\dots ,g_n]\pmod{\gamma_{n+1}(G)}.
    \end{split}
  \end{equation*}
\end{corollary}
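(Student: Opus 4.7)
The plan is to apply Lemma~\ref{lemma separation} to the outer commutator word $w=\gamma_n$ at position $j=i$, and then absorb the resulting conjugation discrepancies into $\gamma_{n+s}(G)$. Lemma~\ref{lemma separation} yields elements $h_1,\ldots,h_n\in\langle h\rangle^G$ such that
\begin{equation*}
    [g_1,\ldots,g_{i-1},gh,g_{i+1},\ldots,g_n] = [g_1^{h_1},\ldots,g^{h_i},\ldots,g_n^{h_n}]\,[g_1,\ldots,g_{i-1},h,g_{i+1},\ldots,g_n],
\end{equation*}
with $gh$ and $g^{h_i}$ occupying position $i$. Since $\gamma_s(G)\trianglelefteq G$ and $h\in\gamma_s(G)$, the normal closure $\langle h\rangle^G$ is contained in $\gamma_s(G)$; hence each $h_k\in\gamma_s(G)$ and $g_k^{h_k}=g_k[g_k,h_k]$ with $[g_k,h_k]\in[G,\gamma_s(G)]\subseteq\gamma_{s+1}(G)$.

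It then suffices to prove the auxiliary claim: if $y_1,\ldots,y_n\in G$ and $c_1,\ldots,c_n\in\gamma_{s+1}(G)$, then
\begin{equation*}
    [y_1c_1,\ldots,y_nc_n]\equiv[y_1,\ldots,y_n]\pmod{\gamma_{n+s}(G)}.
\end{equation*}
I would prove this by induction on $n$. The base case $n=1$ is immediate. For the inductive step, write $u=[y_1c_1,\ldots,y_{n-1}c_{n-1}]$ and $u_0=[y_1,\ldots,y_{n-1}]$; by induction $u=u_0d$ for some $d\in\gamma_{n-1+s}(G)$. Using $[ab,z]=[a,z]^b[b,z]$ gives $[u,y_nc_n]=[u_0,y_nc_n]^d[d,y_nc_n]$, and both $[d,y_nc_n]\in[\gamma_{n-1+s}(G),G]\subseteq\gamma_{n+s}(G)$ and the $d$-conjugation discrepancy $[[u_0,y_nc_n],d]\in[\gamma_n(G),\gamma_{n-1+s}(G)]\subseteq\gamma_{n+s}(G)$ get absorbed. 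Finally $[u_0,y_nc_n]=[u_0,c_n][u_0,y_n]^{c_n}$ via $[a,bz]=[a,z][a,b]^z$, and $[u_0,c_n]\in[\gamma_{n-1}(G),\gamma_{s+1}(G)]\subseteq\gamma_{n+s}(G)$ together with $[u_0,y_n]^{c_n}\equiv[y_1,\ldots,y_n]\pmod{\gamma_{n+s+1}(G)}$ close the induction.

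For the second half of the corollary, take $s=2$; then the first half gives congruence modulo $\gamma_{n+2}(G)$, and the extra factor $[g_1,\ldots,g_{i-1},h,g_{i+1},\ldots,g_n]$ lies in $\gamma_{n+1}(G)$ because $h\in G'=\gamma_2(G)$ contributes depth $2$ in the lower central series while the other $n-1$ entries each contribute depth $1$. Hence this extra factor already vanishes modulo $\gamma_{n+1}(G)\supseteq\gamma_{n+2}(G)$, yielding the stated congruence. The only delicate point is the bookkeeping of lower central degrees in the auxiliary claim; each commutator identity must be invoked so that every residual factor lands in $\gamma_{n+s}(G)$.
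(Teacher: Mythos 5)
Your argument is correct and follows the route the paper itself indicates: the corollary is derived from Lemma~\ref{lemma separation} with $w=\gamma_n$, after which the conjugating elements $h_k\in\langle h\rangle^G\le\gamma_s(G)$ only perturb each entry by a factor in $\gamma_{s+1}(G)$, and the standard weight count $[\gamma_a(G),\gamma_b(G)]\le\gamma_{a+b}(G)$ pushes every residual term into $\gamma_{n+s}(G)$. The bookkeeping in your auxiliary claim checks out, so no further comment is needed.
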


\begin{lemma}
\label{lemma linking}
Let $G$ be a group and $w$ an outer commutator word on $r$ variables.
Let $N\le L\le G$ with $N$ normal in $G$ and suppose that for some $x_1,\ldots,x_{j-1},x_{j+1},\ldots,x_r\in G$, the following two conditions hold:
\begin{enumerate}
    \item $L\subseteq \bigcup_{g\in G}Nw(y_1,\ldots,y_{j-1},g,y_{j+1},\ldots,y_r)\text{ for every }y_i\in x_i^G.$
    \item $N\subseteq\{w(y_1,\ldots,y_{j-1},g,y_{j+1},\ldots,y_r)\mid g\in G\}$ for every $y_i\in x_i^G$.
\end{enumerate}
Then, $L\subseteq\{w(y_1,\ldots,y_{j-1},g,y_{j+1},\ldots,y_r)\mid g\in G\}$ for every $y_i\in x_i^G$.
\end{lemma}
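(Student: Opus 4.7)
The plan is to fix an arbitrary element $l \in L$ together with a choice of conjugates $y_i \in x_i^G$ for $i \neq j$, and to produce a single $g \in G$ for which
$$l = w(y_1, \ldots, y_{j-1}, g, y_{j+1}, \ldots, y_r).$$
By hypothesis (i), I can write $l = n w_0$ where $n \in N$ and $w_0 = w(y_1, \ldots, y_{j-1}, g_0, y_{j+1}, \ldots, y_r)$ for some $g_0 \in G$. The idea is to absorb the prefactor $n$ into a single commutator value by replacing $g_0$ with a product $g' g_0$ for a cleverly chosen $g' \in G$; this is exactly the manipulation Lemma \ref{lemma separation} was designed to enable.

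Specifically, I would apply Lemma \ref{lemma separation} with $h = g_0$ and a free variable $g$ in the $j$-th slot, obtaining elements $h_1, \ldots, h_r \in \langle g_0 \rangle^G$, depending only on $g_0$ (not on $g$), such that
$$w(y_1, \ldots, y_{j-1}, g g_0, y_{j+1}, \ldots, y_r) = w(y_1^{h_1}, \ldots, y_{j-1}^{h_{j-1}}, g^{h_j}, y_{j+1}^{h_{j+1}}, \ldots, y_r^{h_r}) \, w_0.$$
For each $i \neq j$ the conjugate $y_i^{h_i}$ still lies in $x_i^G$, so hypothesis (ii) applies with this new tuple, yielding some $g'' \in G$ with
$$n = w(y_1^{h_1}, \ldots, y_{j-1}^{h_{j-1}}, g'', y_{j+1}^{h_{j+1}}, \ldots, y_r^{h_r}).$$

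Setting $g = (g'')^{h_j^{-1}}$ gives $g^{h_j} = g''$, so the previous displayed identity becomes $w(y_1, \ldots, y_{j-1}, g g_0, y_{j+1}, \ldots, y_r) = n w_0 = l$, which exhibits $l$ as a $w$-value of the desired shape. I do not anticipate any serious obstacle: the proof is essentially a one-step reduction. The only delicate point worth checking is that condition (ii) can legitimately be reapplied to the conjugated tuple $y_i^{h_i}$, but this is built into the statement, since $x_i^G$ is closed under $G$-conjugation and the quantifier ``for every $y_i \in x_i^G$'' in (ii) is precisely what makes this step work.
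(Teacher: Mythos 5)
Your proof is correct and follows essentially the same route as the paper's: decompose $l$ via hypothesis (i), apply Lemma \ref{lemma separation} with $h=g_0$ to get the conjugating elements $h_1,\ldots,h_r$, use hypothesis (ii) on the conjugated tuple to realize $n$, and undo the conjugation in the $j$-th slot by taking $g=(g'')^{h_j^{-1}}$. The one delicate point you flag — that (ii) applies to the tuple $(y_i^{h_i})$ because $x_i^G$ is conjugation-closed — is exactly the point the paper's quantifier ``for every $y_i\in x_i^G$'' is designed to handle, and you handle it correctly.
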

\begin{proof}
Take an arbitrary coset $Nw(y_1,\ldots,y_{j-1},h,y_{j+1},\ldots,y_r)$ of $N$ in $L$, with $y_i\in x_i^G$ and $h\in G$.
Take $h_1,\ldots,h_r$ as in Lemma \ref{lemma separation} and let 
$z$ be an arbitrary element of $N$.
By assumption, there exists $u\in G$ such that $z=w(y_1^{h_1},\ldots,y_{j-1}^{h_{j-1}},u,y_{j+1}^{h_{j+1}},\ldots,y_r^{h_r})$
and we may also assume that $u$ is of the form $u=g^{h_j}$ with $g\in G$.

So, by Lemma \ref{lemma separation} our arbitrary element $zw(y_1,\ldots,y_{j-1},h,y_{j+1},\ldots,y_r)$ 
of the above coset can be written as
\begin{equation*}
\begin{split}
    w(y_1^{h_1},\ldots,y_{j-1}^{h_{j-1}},&g^{h_j},y_{j+1}^{h_{j+1}},\ldots,y_r^{h_r})w(y_1,\ldots,y_{j-1},h,y_{j+1},\ldots,y_r)\\
    &=w(y_1,\ldots,y_{j-1},gh,y_{j+1},\ldots,y_r),
\end{split}
\end{equation*}
as desired.
\end{proof}

We end this section with the following three technical lemmas, which will be basically used to
introduce powers inside commutators in the factor groups of the series of $\gamma_r(G)$ mentioned before
Lemma \ref{lemma separation}.
In particular, Lemma \ref{lemma petrescohard}
will be especially useful to prove that these factor groups consists only of some suitable $\gamma_r$-values.

\begin{lemma}
\label{lemma petrescoeasy}
Let $G$ be a finite $p$-group such that for some $r\ge 2$ we have $d(\gamma_r(G))\le2$ if $p$ is odd or $d(\gamma_r(G))=1$ if $p=2$.
Then,
$$
[x_1,\ldots,x_r]^{p^k}\equiv [[x_1,\ldots,x_j]^{p^k},x_{j+1},\ldots,x_r]\pmod{\gamma_r(G)^{p^{k+1}}}
$$
for every $x_1,\ldots,x_r\in G$, $k\ge 0$ and $2\le j\le r$.
Moreover, if $[x_1,\ldots,x_i]\in R$ for some normal subgroup $R$ of $G$ and $1\le i\le j$, then
$$
[x_1,\ldots,x_r]^{p^k}\equiv [[x_1,\ldots,x_j]^{p^k},x_{j+1},\ldots,x_r]\pmod{[R,_{r-i}\,G]^{p^{k+1}}}.
$$
\end{lemma}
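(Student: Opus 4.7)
My plan is to prove the stronger Moreover form by induction on $r-j$; the first statement is then the particular case $R=\gamma_i(G)$ with $i=j$, since $[\gamma_j(G),_{r-j}\,G]=\gamma_r(G)$. The base case $r=j$ is immediate as both sides are equal to $[x_1,\ldots,x_r]^{p^k}$.

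For the inductive step, I apply the Moreover hypothesis at position $j+1$ with the normal subgroup $S:=[R,_{j+1-i}\,G]$, observing that $[x_1,\ldots,x_{j+1}]\in S$ and that $[S,_{r-j-1}\,G]=[R,_{r-i}\,G]$. Setting $u=[x_1,\ldots,x_j]$ and $v=x_{j+1}$, this reduces the problem to proving
\[
[[u,v]^{p^k},x_{j+2},\ldots,x_r]\equiv[[u^{p^k},v],x_{j+2},\ldots,x_r]\pmod{[R,_{r-i}\,G]^{p^{k+1}}}.
\]
By the Hall-Petresco identity (Lemma \ref{Hall-Petr}) applied to $[v,u]^{p^k}$ and then inverted using $[v,u]=[u,v]^{-1}$, one obtains $[u,v]^{p^k}\cdot[u^{p^k},v]^{-1}=D$, where $D\in\bigl\langle c_\ell^{\binom{p^k}{\ell}}:2\le\ell\le p^k\bigr\rangle$ and $c_\ell\in\gamma_\ell(\langle u,[v,u]\rangle)$. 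It is thus enough to show that $[D,x_{j+2},\ldots,x_r]$ belongs to $[R,_{r-i}\,G]^{p^{k+1}}$.

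This last inclusion is checked term by term, combining four ingredients: (a) since purely-$u$ and purely-$[v,u]$ iterated commutators vanish, every nontrivial $c_\ell$ must involve both letters, and coupled with $u\in\gamma_j(G)$ and $[v,u]\in\gamma_{j+1}(G)$ this forces $c_\ell\in\gamma_{\ell j+1}(G)$, whence $[c_\ell,x_{j+2},\ldots,x_r]\in\gamma_{(\ell-1)j+r}(G)$; (b) Kummer's formula $v_p(\binom{p^k}{\ell})=k-v_p(\ell)$; (c) the commutator-power inclusion $[L^n,N]\le[L,N]^n[L,N,L]$ of Lemma \ref{lemma prelim}(i), used to commute $p$-th powers past the outer commutators modulo deeper terms; and (d) the powerfulness of $\gamma_r(G)$ (Lemma \ref{lemma powerful}) combined with the unnumbered fact $[L,G,G]\le N$ when $[L:N]=p^2$ quoted just after Lemma \ref{lemma prelim}, which iterates to $\gamma_{r+m}(G)\le\gamma_r(G)^{p^{\lfloor m/2\rfloor}}$ (and to the analogous inclusion in the $R$-filtration needed for the Moreover step).

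The main obstacle is the arithmetic of (a)--(d): for each $\ell\ge 2$ one must verify the inequality $\lfloor(\ell-1)j/2\rfloor\ge v_p(\ell)+1$, which balances the depth-gain of $c_\ell$ against the loss of $p$-divisibility in $\binom{p^k}{\ell}$ when $\ell$ is a large $p$-power. The hypothesis $j\ge 2$ is exactly what makes this balance work for $p$ odd; when $p=2$, the hypothesis $d(\gamma_r(G))=1$ (cyclicity) yields the sharper inclusion $[L,_i\,G]\le L^{p^i}$ from the same quoted facts, and the same arithmetic goes through.
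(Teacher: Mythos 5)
Your proposal follows essentially the same route as the paper's proof: induction on $r-j$, with the Hall--Petresco identity supplying the correction terms $c_\ell$ at each step, the observation that every nontrivial $c_\ell$ involves both generators and hence lies at depth about $\ell j$ in the relevant filtration, and the powerful-group inclusion $[L^{p^m},G,G]\le L^{p^{m+1}}$ (respectively $[L^{2^m},G]\le L^{2^{m+1}}$ in the cyclic $p=2$ case) to convert that depth into $p$-th powers against the valuation of $\binom{p^k}{\ell}$. The differences are cosmetic --- you restate the inductive hypothesis with the shifted subgroup $S=[R,_{j+1-i}\,G]$ and use Kummer's exact formula where the paper uses the bound $v_p\bigl(\binom{p^k}{n}\bigr)\ge k-(n-2)$ --- so this is the paper's argument.
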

\begin{proof}
The first assertion follows immediately from the second one.
We fix $r$, and we will prove by induction on $r-j$ that the assertion holds for all $k$.
Thus, assume $[x_1,\ldots,x_i]\in R$ for some normal subgroup $R$ of $G$ and some $1\le i\le j$.
For $r=j$ the result is clear, so assume $j<r$ and
$$
[x_1,\ldots,x_r]^{p^k}\equiv [[x_1,\ldots,x_{j+1}]^{p^k},x_{j+2},\ldots,x_r]\pmod{[R,_{r-i}\, G]^{p^{k+1}}}.
$$
By the Hall-Petresco Identity, we have
$$
[x_1,\ldots,x_{j+1}]^{p^k}=[[x_1,\ldots,x_j]^{p^{k}},x_{j+1}]c_2^{\binom{p^k}{2}}\ldots c_{p^k}
$$
with $c_n\in\gamma_n(\langle[x_1,\ldots,x_{j+1}],[x_1,\ldots,x_j]\rangle)$ for $2\le n\le p^k$.
Since $j\ge 2$, it follows that
$$
c_n\in[R,_{nj-i+1}\,G]\le [R,_{j-i+2(n-1)+1}\,G]\qquad 
$$
for every $n$.
Note that $\binom{p^k}{n}\ge p^k-(n-2)$ if $p$ is odd and $\binom{p^k}{n}\ge p^k-(n-1)$ if $p=2$.
We denote with $\lceil s\rceil$ the smallest integer which is greater or equal to $s$.
So, if $p$ is odd, we get
$$
c_n^{\binom{p^k}{n}}\in[R,_{j-i+2(n-1)+1}\,G]^{\lceil p^{k-(n-2)}\rceil},
$$
and if $p=2$ we get
$$
c_n^{\binom{2^k}{n}}\in[R,_{j-i+2(n-1)+1}\,G]^{\lceil 2^{k-(n-1)}\rceil}.
$$
Since $d(\gamma_r(G))\le2$, it follows by Lemma \ref{lemma powerful} that $\gamma_r(G)$ is powerful.
By Lemma \ref{lemma powerful subgroups} we then obtain that for all $m\ge 0$, $[R,_{j-i}\,G]^{p^m}$
is also poweful and $d([R,_{j-i}\,G]^{p^m})\le 2$,
so 
$$
|[R,_{j-i}\,G]^{p^m}:[R,_{j-i}\,G]^{p^{m+1}}|\le p^2
$$
for all $m\ge 0$.
This implies, in particular, that
$$
[[R,_{j-i}\,G]^{p^m},G,G]\le[R,_{j-i}\,G]^{p^{m+1}},
$$
for all $m\ge 0$, and therefore
$$
[R,_{j-i+2(n-1)+1}\,G]^{\lceil p^{k-(n-2)}\rceil}\le[R,_{j-i+1}\,G]^{p^{k+1}}.
$$
Now, if $p$ is odd, using the inductive hypothesis with $k+1$ in place of $k$  we have
\begin{align*}
    [[R,_{j-i+2(n-1)+1}\,G]^{\lceil p^{k-(n-2)}\rceil},_{r-j-1}\,G]&\le[[R,_{j-i+1}\,G]^{p^{k+1}}],_{r-j-1}\,G]\\
    &\le[R,_{r-i}\,G]^{p^{k+1}}.
\end{align*}

If $p=2$ the result follows arguing in the same way, taking into account the fact that, in this case, $\gamma_r(G)$ is cyclic and hence
$$
[[R,_{r-i}\,G]^{2^m},G]\le[R,_{r-i}\,G]^{2^{m+1}}.
$$
\end{proof}

\begin{lemma}
\label{lemma petrescoout}
Let $G$ be a finite $p$-group such that for some $r\ge 2$ we have $d(\gamma_r(G))\le 2$ if $p$ is odd and $d(\gamma_r(G))=1$ if $p=2$.
Assume that $H$ and $K$ are normal subgroups of $G$, with $K$ generated by 
$\gamma_{j-1}$-values.
Then for every $k\ge 0$ and for every $j$ with $1\le j\le r$, we have
$$
[K,H^{p^k},_{r-j}\,G]\le[K,H,_{r-j}\,G]^{p^k}.
$$
\end{lemma}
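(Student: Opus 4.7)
The plan is to induct on $k$, with the base case $k = 0$ being trivial. Using the commutator identity $[ab,c] = [a,c]^b[b,c]$ and the hypothesis that $K$ is generated by $\gamma_{j-1}$-values (and that both $K$ and $H$ are normal), the problem reduces to showing
$$
[x, y^{p^k}, g_1, \ldots, g_{r-j}] \in [K, H,_{r-j}\,G]^{p^k}
$$
for a single $\gamma_{j-1}$-value $x = [x_1, \ldots, x_{j-1}] \in K$, $y \in H$ and $g_1, \ldots, g_{r-j} \in G$. Writing $y^{p^k} = (y^{p^{k-1}})^p$ and applying the Hall--Petresco identity (Lemma~\ref{Hall-Petr}) with $n = p$ gives
$$
[x, y^{p^k}] \;=\; [x, y^{p^{k-1}}]^{p} \cdot \prod_{i=2}^{p} c_i^{-\binom{p}{i}},
$$
where each $c_i \in \gamma_i(\langle y^{p^{k-1}}, [x, y^{p^{k-1}}]\rangle) \subseteq [K, H^{p^{k-1}},_{i-1}\,G]$. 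Commutating with $g_1, \ldots, g_{r-j}$ then leaves a main term from $[x, y^{p^{k-1}}]^p$, correction terms from the $c_i$'s, and cross terms from the bilinearity of commutators; the last will be absorbed by the estimates below.

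For the main term, apply Lemma~\ref{lemma petrescoeasy} with $R = [K, H^{p^{k-1}}]$, its ``$i$'' equal to $j$ and its ``$k$'' equal to $1$ to obtain
$$
[[x, y^{p^{k-1}}]^{p}, g_1, \ldots, g_{r-j}] \equiv [x, y^{p^{k-1}}, g_1, \ldots, g_{r-j}]^{p} \pmod{[K, H^{p^{k-1}},_{r-j}\,G]^{p^2}}.
$$
The inductive hypothesis places $[x, y^{p^{k-1}}, g_1, \ldots, g_{r-j}]$ inside $[K, H^{p^{k-1}},_{r-j}\,G] \subseteq [K, H,_{r-j}\,G]^{p^{k-1}}$; since $[K, H,_{r-j}\,G] \le \gamma_r(G)$ is powerful and $2$-generated (Lemma~\ref{lemma powerful subgroups}), the powerful-group property $(A^{p^{k-1}})^p = A^{p^k}$ shows that raising to the $p$-th power lands in $[K, H,_{r-j}\,G]^{p^k}$. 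The modulus $[K, H^{p^{k-1}},_{r-j}\,G]^{p^2}$ is likewise absorbed into $[K, H,_{r-j}\,G]^{p^{k+1}}$ by the inductive hypothesis.

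For each correction term $c_i^{\binom{p}{i}}$, noting that $K$ is automatically generated by $\gamma_{j'-1}$-values for every $j' \le j$ (since any $\gamma_{j-1}$-value can be viewed as a $\gamma_{j'-1}$-value by grouping its initial entries), the inductive hypothesis applied with an appropriate $j'$ places $c_i$ inside $[K, H,_{i-1}\,G]^{p^{k-1}}$ modulo deeper commutators. Commutating with $g_1, \ldots, g_{r-j}$ and collecting powers gives a contribution in $[K, H,_{r-j+i-1}\,G]^{p^{k-1}\binom{p}{i}}$. For $i < p$ the divisibility $p \mid \binom{p}{i}$ already provides the needed $p^k$-factor, while for $i = p$ (odd-$p$ case) the binomial coefficient is $1$, but the extra $p - 1$ commutators with $G$ compensate: the powerful $2$-generator structure of $\gamma_r(G)$ yields $[A, G, G] \le A^p$ for every normal $A \le \gamma_r(G)$ (since $|A^{p^s}:A^{p^{s+1}}| \le p^2$), so iteration gives $[K, H,_{r-j+p-1}\,G] \le [K, H,_{r-j}\,G]^{p^{\lfloor(p-1)/2\rfloor}}$, and $\lfloor(p-1)/2\rfloor \ge 1$ for $p \ge 3$ restores the missing factor; the cyclic hypothesis handles $p = 2$ via the stronger $[A, G] \le A^p$. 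The main technical obstacle is precisely this tight accounting of the $c_p$ correction in the odd-$p$ case, together with the bookkeeping needed to confirm that every cross commutator from expanding $[AB, g_1, \ldots, g_{r-j}]$ lies deep enough in the central series to be absorbed into $[K, H,_{r-j}\,G]^{p^k}$.
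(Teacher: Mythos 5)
Your overall strategy (induction on $k$, Hall--Petresco, absorption of the correction terms via the powerful $2$-generator structure of $\gamma_r(G)$) is the right one, and your treatment of the main term via Lemma \ref{lemma petrescoeasy} is sound. But the proof is not complete, and you say so yourself: the ``tight accounting of the $c_p$ correction'' and the ``bookkeeping'' for cross terms are precisely the points where the argument must be carried out rather than deferred. Two concrete problems. First, the step ``the inductive hypothesis applied with an appropriate $j'$ places $c_i$ inside $[K,H,_{i-1}\,G]^{p^{k-1}}$'' does not typecheck: to extract $[K,H^{p^{k-1}},_{i-1}\,G]\le [K,H,_{i-1}\,G]^{p^{k-1}}$ from the statement of the lemma you need $r-j'=i-1$, i.e.\ $j'=r-i+1$, and nothing guarantees $j'\le j$ (needed for $K$ to be generated by $\gamma_{j'-1}$-values) or even $j'\ge 1$. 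The workable order of operations is the opposite one: first append $g_1,\dots,g_{r-j}$, observe that $[c_i,g_1,\dots,g_{r-j}]\in[K,H^{p^{k-1}},_{r-j+i-1}\,G]=[[K,H^{p^{k-1}},_{r-j}\,G],_{i-1}\,G]$, apply the inductive hypothesis with the \emph{original} $j$, and only then peel off the extra $i-1$ commutators using $[A^{p^s},G,G]\le A^{p^{s+1}}$. Second, the ``collecting powers'' step --- pushing the exponent $p^{k-1}\binom{p}{i}$ through $r-j$ further commutators --- is itself a statement of the same nature and difficulty as the lemma being proved, and is asserted rather than established; for $i=2$ in particular the single extra commutator does not produce a factor of $p$ from $[A,G,G]\le A^p$, so the divisibility $p\mid\binom{p}{2}$ must genuinely be transported outside the commutator, which requires its own (sub)induction.

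The idea you are missing is the paper's reduction of the case $k\ge 2$ to the case $k=1$: write $H^{p^k}=(H^p)^{p^{k-1}}$ and apply the inductive hypothesis to the pair $(K,H^p)$ with exponent $k-1$, giving
$$
[K,H^{p^k},_{r-j}\,G]\le[K,H^p,_{r-j}\,G]^{p^{k-1}}\le\bigl([K,H,_{r-j}\,G]^p\bigr)^{p^{k-1}}=[K,H,_{r-j}\,G]^{p^k},
$$
the last equality because $[K,H,_{r-j}\,G]\le\gamma_r(G)$ is powerful. This confines the Hall--Petresco analysis to $k=1$, where it is carried out at the level of subgroups (so no cross terms need tracking) and the corrections collapse to the clean inclusion $[K,H^p]\le[K,H]^p[K,H,H,H]$, with $[K,H,H,H,_{r-j}\,G]\le[[K,H,_{r-j}\,G],G,G]\le[K,H,_{r-j}\,G]^p$. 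If you adopt this reduction, your $k=1$ computation essentially becomes the whole proof.
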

\begin{proof}
We use induction on $k$.
The case $k=0$  is trivial, so assume $k=1$ first, and suppose $p\ge 3$ (if $p=2$ the proof follows in the same way).
As $p$ divides $\binom p i$ for $2\le i <p$ and $\gamma_3(\langle [K,H],H\rangle)\le [K,H,H,H]$, the Hall-Petresco Identity yields
$$
[K,H^p]\le[K,H]^p[K,H,H,H].
$$
Note that $[K,H]$ is generated by elements of the type $[x_1,\dots,x_{j-1},x_j]^p$,
where 
$x_1,\dots,x_{j-1}\in G$ and $x_j\in H$, so
by Lemma \ref{lemma petrescoeasy}, we have
$$
[[K,H]^p,_{r-j}\,G]\le[K,H,_{r-j}\,G]^p.
$$
On the other hand, $\gamma_r(G)$ is powerful by Lemma \ref{lemma powerful}.
Thus, it follows from 
Lemma \ref{lemma powerful subgroups} that
$$
|[K,H,_{r-j}\,G]:[K,H,_{r-j}\,G]^p|\le p^2,
$$
so we get
\begin{align*}
    [K,H,H,H,_{r-j}\,G]&\le[[K,H,_{r-j}\,G],G,G]\\
    &\le[K,H,_{r-j}\,G]^p.
\end{align*}
Hence,
$$
[K,H^p,_{r-j}\,G]\le
[[K,H]^p[K,H,H,H]],_{r-j}\,G]
\le [K,H,_{r-j}\,G]^p,
$$
as desired.

Assume now $k\ge 2$.
Then, by induction,
\begin{align*}
    [K,H^{p^k},_{r-j}\,G]&\le[K,(H^p)^{p^{k-1}},_{r-j}\,G]\\
    &\le [K,H^p,_{r-j}\,G]^{p^{k-1}}\\
    &\le ([K,H,_{r-j}\,G]^p)^{p^{k-1}},
\end{align*}
and since $[K,H,_{r-j}\,G]$ is powerful by Lemma \ref{lemma powerful subgroups}, we have
$$
([K,H,_{r-j}\,G]^p)^{p^{k-1}}=[K,H,_{r-j}\,G]^{p^k}.
$$
\end{proof}

\begin{lemma}
\label{lemma petrescohard}
Let $G$ be a finite $p$-group and let $N,L$ be normal subgroups of $G$ such that $\gamma_r(G)^p\le N\le L \le \gamma_r(G)$ with $r\ge 2$ and $|L:N|=p$.
Assume that there exist some $j$ with $1\le j\le r$ and 
$x_1,\ldots,x_{j-1},h,x_{j+1},\ldots,x_r\in G$ such that
\begin{align*}
    L&=\langle[x_1,\ldots,x_{j-1},h,x_{j+1},\ldots,x_r]\rangle N.
\end{align*}
Let $H$ be the normal closure of $\langle h\rangle$ in $G$ and assume also that one of the following conditions hold:
\begin{enumerate}
    \item $p$ is odd, $d(\gamma_r(G))\le2$ and the subgroup
    $$
    [\gamma_{j}(G),H,H,_{r-j}\,G]
    $$
    is central of exponent $p$ modulo $N^p$.
    
    \item $p=2$, the subgroup $\gamma_r(G)$ is cyclic and
    \begin{align*}
    [x_1,\ldots,x_{j-1}&,h,x_{j+1},\ldots,x_r]^{2}\\
    &\equiv[x_1,\ldots,x_{j-1},h^{2},x_{j+1},\ldots,x_r]\pmod{N^{2}}.
    \end{align*}
\end{enumerate}
Then,
\begin{align*}
    [x_1,\ldots,x_{j-1}&,h,x_{j+1},\ldots,x_r]^{p^k}\\
    &\equiv[x_1,\ldots,x_{j-1},h^{p^k},x_{j+1},\ldots,x_r]\pmod{N^{p^k}}
\end{align*}
for every $k\ge 0$.
In particular,
$$
L^{p^k}=\langle[x_1,\ldots,x_{j-1},h^{p^k},x_{j+1},\ldots,x_r]\rangle N^{p^k}.
$$
\end{lemma}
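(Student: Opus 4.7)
The plan is to prove the main congruence by induction on $k$; the ``in particular'' statement then follows from the congruence together with $L = \langle\ell\rangle N$ and powerfulness of $L \le \gamma_r(G)$ (Lemma \ref{lemma powerful subgroups}), so that property (4) of powerful groups gives $L^{p^k} = \langle \ell^{p^k}\rangle N^{p^k} = \langle \ell_k\rangle N^{p^k}$. Throughout, set $a = [x_1,\ldots,x_{j-1}]$ (with $[a,h]:=h$ when $j=1$), $\ell = [a,h,x_{j+1},\ldots,x_r]$ and $\ell_k = [a,h^{p^k},x_{j+1},\ldots,x_r]$. The case $k=0$ is trivial.

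For $k=1$, hypothesis (2) provides the congruence directly. Under (1), I would first invoke Lemma \ref{lemma petrescoeasy} with $R = \langle [a,h]\rangle^G \le \gamma_j(G)$, whose exponent-$p^2$ tail commutator falls into $\gamma_r(G)^{p^2} \le N^p$, to obtain
$$\ell^p \equiv [[a,h]^p, x_{j+1}, \ldots, x_r] \pmod{N^p}.$$
Then I would apply the Hall--Petresco identity (Lemma \ref{Hall-Petr}) to write $[a,h]^p = [a,h^p]\prod_{i=2}^p c_i^{\binom p i}$ with $c_i \in \gamma_i(\langle h,[a,h]\rangle)$, and expand the resulting outer commutator using Corollary \ref{corollary separation}. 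A weight analysis forces each nontrivial $c_i$ ($i \ge 2$) into $[\gamma_j(G),H]$, and for $i\ge 3$ into $[\gamma_j(G),H,H]$ modulo higher-weight terms that are absorbed into $N^p$ via iterated $\gamma_r$-power bounds. Commuting with $x_{j+1},\ldots,x_r$ lands these contributions in $Y := [\gamma_j(G),H,H,_{r-j}\,G]$; the hypothesis that $Y/N^p$ is central of exponent $p$, combined with the divisibility $p \mid \binom{p}{i}$ for $2\le i\le p-1$, kills the $i \le p-1$ terms modulo $N^p$, while the $c_p$ contribution is handled using its placement in $Y$ together with the centrality $[Y,G] \le N^p$.

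For the inductive step $k \to k+1$, I would write $\ell^{p^k} = \ell_k\, n$ with $n \in N^{p^k}$ and raise to the $p$-th power. Hall--Petresco expansion of $(\ell_k n)^p$ inside the powerful group $\gamma_r(G)$ (cyclic in case (2)) puts the correction in $[\gamma_r(G), N^{p^k}]$; a repeated application of Lemma \ref{lemma prelim}(i), together with $\gamma_r(G)' \le \gamma_r(G)^{p^2} \le N^p$ (Lemma \ref{lemma powerful}) and the powerfulness of $N$ (Lemma \ref{lemma powerful subgroups}), bounds this by $N^{p^{k+1}}$, while $n^p \in N^{p^{k+1}}$, giving $\ell^{p^{k+1}} \equiv \ell_k^p \pmod{N^{p^{k+1}}}$. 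To finish, I need $\ell_k^p \equiv \ell_{k+1} \pmod{N^{p^{k+1}}}$, which I would prove by repeating the Hall--Petresco-plus-weight analysis from the base case applied to $[a,h^{p^k}]^p$. The crucial refinement is that the error terms now involve $H^{p^k}$: Lemma \ref{lemma petrescoout} converts $[\gamma_j(G),H^{p^k},_{r-j}\,G]$-type subgroups into $[\gamma_j(G),H,_{r-j}\,G]^{p^k}$, endowing each error with an extra factor of $p^k$; combined with the centrality-and-exponent-$p$ hypothesis on $Y$, the errors land in $(Y^p)^{p^k} \le N^{p^{k+1}}$.

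The hardest step is precisely this last passage to modulus $N^{p^{k+1}}$: a naive reapplication of the base case only yields $\ell_k^p \equiv \ell_{k+1} \pmod{N^p}$, and bridging the gap requires a careful use of Lemma \ref{lemma petrescoout} to extract $p^k$-factors from commutators involving $H^{p^k}$, while simultaneously tracking all the weight-bumps produced by the Hall--Petresco collection. Case (2) is notably easier, since $\gamma_r(G)$ cyclic is abelian, so all commutator corrections vanish and the induction propagates essentially automatically once the base hypothesis is in place.
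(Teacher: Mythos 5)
Your proposal is correct and follows essentially the same route as the paper's proof: induction on $k$, splitting off the $N^{p^k}$-correction from $(\ell_k n)^p$ via the index bound in the powerful group $\gamma_r(G)$, pulling the $p$-th power inside with Lemma \ref{lemma petrescoeasy}, expanding $[a,h^{p^k}]^p$ by Hall--Petresco, and killing the $\gamma_2$- and $\gamma_3$-error terms by combining Lemma \ref{lemma petrescoout} (to extract the $p^k$-factors from commutators with $H^{p^k}$) with the centrality-and-exponent-$p$ hypothesis, exactly as in the paper's claim (\ref{star}). The only differences are organizational (you treat $k=1$ as a separate base case where the paper folds it into the inductive step) and do not change the argument.
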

\begin{proof}
We use induction on $k$.
If $k=0$ there is nothing to prove and, if $p=2$ and $k=1$, then the result follows from the hypothesis.
Thus, assume $k\ge 1$ if $p$ is odd or $k\ge 2$ if $p=2$, and suppose, by induction, that
$$
[x_1,\ldots,x_{j-1},h,x_{j+1},\ldots,x_r]^{p^{k-1}}=[x_1,\ldots,x_{j-1},h^{p^{k-1}},x_{j+1},\ldots,x_r]y
$$
for some $y\in N^{p^{k-1}}$.

Let $u=[x_1,\ldots,x_{j-1},h^{p^{k-1}},x_{j+1},\ldots,x_r]\in \gamma_r(G)$.
Note that $(uy)^p=
u^py^pc$ where $c\in [N^{p^{k-1}},\gamma_r(G)]\le [N^{p^{k-1}},G,G]\le (N^{p^{k-1}})^p=N^{p^{k}}$.
Thus,
\begin{equation*}
\begin{split}
    ([x_1,\ldots,x_{j-1}&,h^{p^{k-1}},x_{j+1},\ldots,x_r]y)^p\\
    &\equiv[x_1,\ldots,x_{j-1},h^{p^{k-1}},x_{j+1},\ldots,x_r]^p\pmod{N^{p^k}}.
\end{split}
\end{equation*}
Moreover, by Lemma \ref{lemma petrescoout}, we have
\begin{align*}
[\gamma_{j}(G),H^{p^{k-1}},_{r-j}\,G]^{p^2}&\le[\gamma_{j}(G),H,_{r-j}\,G]^{p^{k+1}}\\
&\le(\gamma_r(G)^{p^{k+1}})\le N^{p^k},
\end{align*}
so using Lemma \ref{lemma petrescoeasy} with $R=[\gamma_{j-1}(G),H^{p^{k-1}}]$ we obtain
\begin{align*}
    [x_1,\ldots,x_{j-1},&h,x_{j+1},\ldots,x_r]^{p^{k}}\\
    &\equiv[x_1,\ldots,x_{j-1},h^{p^{k-1}},x_{j+1},\ldots,x_r]^p\\
    &\equiv[[x_1,\ldots,x_{j-1},h^{p^{k-1}}]^p,x_{j+1},\ldots,x_r]\pmod{N^{p^k}}.
\end{align*}

Suppose now $p$ is odd.
We first prove that
\begin{equation}\label{star}
 [\gamma_{j-1}(G),H^{p^{k-1}},H^{p^{k-1}},_{r-j}\,G]
\text{ is central of exponent }p \text{ modulo }N^{p^k}.\end{equation}

If $k=1$ the claim follows from the hypothesis, so we may assume $k\ge 2$.
Recall that $L$, $N$ and $[\gamma_{j-1}(G),H,H,_{r-j}\,G]$ are powerful by Lemma \ref{lemma powerful} and Lemma \ref{lemma powerful subgroups}.
From Lemma \ref{lemma petrescoout} we then get
\begin{align*}
    [\gamma_{j-1}(G),H^{p^{k-1}}&,H^{p^{k-1}},_{r-j+1}\,G]\\
    &\le [\gamma_{j-1}(G),H,H,_{r-j+1}\,G]^{p^{2k-2}}\\
    &\le (N^p)^{p^{2k-2}}\le N^{p^k}
\end{align*}
and
\begin{align*}
    [\gamma_{j-1}(G),H^{p^{k-1}}&,H^{p^{k-1}},_{r-j}\,G]^p\\
    &\le ([\gamma_{j-1}(G),H,H,_{r-j}\,G]^{p^{2k-2}})^p\\
    &\le (N^p)^{p^{2k-2}}\le N^{p^k}.
\end{align*}
This proves (\ref{star}).

By the Hall-Petresco Identity, since $p\ge 3$, we get 
$$
[x_1,\ldots,x_{j-1},h^{p^{k-1}}]^p=[x_1,\ldots,x_{j-1},h^{p^k}]z_2^pz_3,
$$
where $z_i\in\gamma_i(\langle[x_1,\ldots,x_{j-1},H^{p^{k-1}}],H^{p^{k-1}}\rangle)$ for $i=2,3$.
Write
$$
R=[\gamma_{j-1}(G),H^{p^{k-1}},H^{p^{k-1}}],
$$
so that $z_2\in R$ and $z_3\in[R,G].$

On the one hand, by (\ref{star}) we have
\begin{align*}
[z_3,x_{j+1},\ldots,x_r]\in [R,_{r-j+1}\,G]\le N^{p^k}.
\end{align*}

On the other hand it follows from Lemma \ref{lemma petrescoout} 
with $H=R$ and $K=G$ and from (\ref{star})
that
\begin{align*}
    [z_2^p,x_{j+1},\ldots,x_{r}]\in[R,_{r-j}\,G]^p\le N^{p^k}.
\end{align*}
Therefore,
\begin{align*}
    [x_1,\ldots,x_{j-1}&,h,x_{j+1},\ldots,x_r]^{p^k}\equiv [[x_1,\ldots,x_{j-1},h^{p^k}]z_2^pz_3,x_{j+1},\ldots,x_r]\\
    &\equiv[x_1,\ldots,x_{j-1},h^{p^k},x_{j+1},\ldots,x_r]\pmod{N^{p^k}}
\end{align*}
as we wanted.

If $p=2$, since $\gamma_r(G)$ is cyclic, we have $L=\gamma_r(G)$, $N=\gamma_r(G)^{p}$ and 
the inductive step easily follows 
from the Hall-Petresco Identity.
Namely, 
$$
[x_1,\ldots,x_{j-1},h^{2^{k-1}}]^2=[x_1,\ldots,x_{j-1},h^{2^k}]z_2,
$$
where $z_2\in [\gamma_{j-1}(G),G^{2^{k-1}},G^{2^{k-1}}]$.
By Lemma \ref{lemma petrescoout} we have
$$[\gamma_{j-1}(G),G^{2^{k-1}},G^{2^{k-1}},_{r-j}\,G]\le \gamma_{r+1}(G)^{2k-2}\le \gamma_{r}(G)^{2^{k+1}}=N^{2^k},$$
so the result follows as above.\end{proof}

\section{Proof of Theorem A when $\gamma_r(G)$ is cyclic}
\label{section cyclic case}

Dark and Newell already proved Theorem A when $\gamma_r(G)$ is cyclic in \cite{DN}, but we will give an alternative simpler proof in Theorem \ref{theorem cyclic p=2} below.
In addition, we will also prove the case $p=2$, which was omitted since it was pointed out to be very technical.
Moreover, even if Theorem \ref{theorem cyclic p=2} can be modified so that it works for all primes, we will prove the case in which $p$ is odd separately in Theorem \ref{theorem p odd},
since in this case the proof turns out to be much shorter.
First, however, we need the following simple but very helpful lemma.

\begin{lemma}
\label{lemma aut}
Let $N$ be a cyclic normal subgroup of a group $G$.
Then, $[N,G']=1$.
\end{lemma}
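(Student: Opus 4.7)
The plan is to exploit the well-known fact that the automorphism group of any cyclic group is abelian. Since $N$ is normal in $G$, conjugation induces a homomorphism $\varphi \colon G \to \Aut(N)$ whose kernel is the centralizer $C_G(N)$.

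Because $N$ is cyclic, we have that $\Aut(N)$ is abelian: indeed, if $N$ is finite of order $n$, then $\Aut(N) \cong (\Z/n\Z)^{\times}$, and if $N$ is infinite cyclic, then $\Aut(N) \cong \Z/2\Z$. In either case, the image $\varphi(G)$ is abelian, so $\varphi(G') = 1$. This means that $G' \le C_G(N)$, which is precisely the statement $[N, G'] = 1$.

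There is no serious obstacle here; the only thing to be a little careful about is that the statement does not assume $N$ is finite, so one should acknowledge both the finite and infinite cyclic cases when invoking the abelianness of $\Aut(N)$. Given how short the argument is, in the paper it will likely be written in one or two sentences.
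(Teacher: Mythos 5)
Your proof is correct and follows essentially the same route as the paper: since $N$ is cyclic, $\Aut(N)$ is abelian, hence $G/C_G(N)$ is abelian and $G'\le C_G(N)$. The paper does not separate the finite and infinite cases, but that refinement changes nothing of substance.
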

\begin{proof}
Since $N$ is cyclic, the automorphism group $\Aut(N)$ of $N$ is abelian.
Hence, $G/C_G(N)$ is also abelian, which means that $G'\le C_G(N)$.
\end{proof}

We will also need the following result, which is Lemma 2.3 of \cite{DF}.

\begin{lemma}\label{2.3DF}
Let $G$ be a group and let $N\le L \le G$, with $N$ normal in $G$.
Suppose that for some $x\in G$ the following two conditions hold:
\begin{itemize}
 \item[(i)] $L/N \subseteq \left\{N[x,g]\,| g\in G\right\}$.
 \item[(ii)] $N\subseteq \left\{[x,g]\,| g\in G\right\}$.
\end{itemize}
Then $L\subseteq\left\{[x,g]\,| g\in G\right\}$.
\end{lemma}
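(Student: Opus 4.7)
The plan is to show that an arbitrary element $\ell\in L$ can be realised as $[x,g]$ for a suitable $g\in G$, by first approximating $\ell$ modulo $N$ using hypothesis (i) and then absorbing the resulting correction term from $N$ into the commutator via hypothesis (ii). Concretely, I would start by choosing, for a given $\ell\in L$, an element $g_0\in G$ and $n\in N$ such that $\ell=n[x,g_0]$; condition (i) guarantees this. The natural candidate for the target expression $[x,g]$ is $g=g_1g_0$ for some $g_1$ still to be determined, since the commutator identity $[x,ab]=[x,b]\,[x,a]^b$ from Lemma~\ref{comm id}(ii) gives
\[
[x,g_1g_0]=[x,g_0]\,[x,g_1]^{g_0},
\]
which has exactly the shape needed to match $n[x,g_0]$ after rearranging.

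Equating $[x,g_1g_0]=n[x,g_0]$ forces
\[
[x,g_1]^{g_0}=[x,g_0]^{-1}n\,[x,g_0],\qquad \text{that is,}\qquad [x,g_1]=n^{[x,g_0]g_0^{-1}}.
\]
Here the crucial point is that $N$ is normal in $G$, so the right-hand side lies in $N$; condition (ii) then supplies a $g_1\in G$ with precisely this value of $[x,g_1]$. Plugging back yields $[x,g_1g_0]=\ell$. The main (indeed essentially the only) obstacle is picking the correct conjugate of $n$: one has to unwind the commutator identity carefully so that, after conjugation by $g_0$, the correction factor reproduces $n$ in the right position. Once this is done the proof is immediate, and no further machinery (nilpotency, finiteness, $p$-group assumptions) is required — the statement is purely group-theoretic.
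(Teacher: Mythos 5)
Your argument is correct: solving $[x,g_1]=n^{[x,g_0]g_0^{-1}}$ via (ii) and normality of $N$, then recombining with $[x,g_1g_0]=[x,g_0][x,g_1]^{g_0}$, does give $[x,g_1g_0]=n[x,g_0]=\ell$. The paper itself only cites \cite{DF} for this lemma rather than proving it, but your proof is exactly the specialization to the commutator word of the paper's own Lemma~\ref{lemma separation}/Lemma~\ref{lemma linking} mechanism (approximate modulo $N$ by (i), absorb the $N$-correction by (ii) after the appropriate conjugation), so it is essentially the same approach.
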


\begin{theorem}
\label{theorem p odd}
Let $G$ be a finite $p$-group with $p$ odd and $\gamma_r(G)$ cyclic.
Then
$$\gamma_r(G)=\{[g_1,\ldots,g_r]\mid g_1,\ldots,g_r\in G\}.$$
\end{theorem}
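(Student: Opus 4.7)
The plan is to reduce the statement to an application of Lemma~\ref{2.3DF} with $x=[x_1,\ldots,x_{r-1}]$ for a well-chosen tuple. Since $\gamma_r(G)$ is cyclic and generated as a subgroup by the set of all $\gamma_r$-values, some single $\gamma_r$-value already generates the Frattini quotient $\gamma_r(G)/\gamma_r(G)^p$, and hence all of $\gamma_r(G)$. So I would pick $x_1,\ldots,x_r\in G$ with $c:=[x_1,\ldots,x_r]$ a generator of $\gamma_r(G)$, write $\delta:=[x_1,\ldots,x_{r-1}]$, and aim to prove $\gamma_r(G)\subseteq\{[\delta,g]\mid g\in G\}$.

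The crucial structural input is that, for $p$ odd, the conjugation action of $G$ on the cyclic normal $p$-subgroup $\gamma_r(G)\cong\Z/p^n\Z$ factors through a $p$-subgroup of $\Aut(\gamma_r(G))\cong(\Z/p^n\Z)^{\times}$, whose unique $p$-Sylow is contained in $1+p\Z/p^n\Z$. Thus each $g\in G$ acts on $\gamma_r(G)$ by $c\mapsto c^{1+pm_g}$ for some integer $m_g$, which gives $[\gamma_r(G),G]\le\gamma_r(G)^p$. Combining this with Lemma~\ref{lemma prelim}(i) and the abelianness of $\gamma_r(G)$ yields
\[
[\gamma_r(G),G,G]\le[\gamma_r(G)^p,G]\le[\gamma_r(G),G]^p[\gamma_r(G),G,\gamma_r(G)]\le\gamma_r(G)^{p^2}.
\]
In particular, for any $H\le G$ we have $[\gamma_r(G),H,H]\le\gamma_r(G)^{p^2}=(\gamma_r(G)^p)^p$, so hypothesis~(1) of Lemma~\ref{lemma petrescohard} at $j=r$ with $N=\gamma_r(G)^p$ is automatic.

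With this in place, I would prove by downward induction on $k\ge 0$ that $\gamma_r(G)^{p^k}\subseteq\{[\delta,g]\mid g\in G\}$. For $k$ large enough that $\gamma_r(G)^{p^k}=1$, the base case is $1=[\delta,1]$. For the inductive step, assuming the inclusion for $k+1$, I would apply Lemma~\ref{2.3DF} with $x=\delta$, $L=\gamma_r(G)^{p^k}$, and $N=\gamma_r(G)^{p^{k+1}}$: condition (ii) is the inductive hypothesis, and condition (i) requires each of the $p$ cosets $Nc^{ip^k}$ with $0\le i<p$ to contain some $[\delta,g]$. The $i=0$ coset contains $1$, and for $i$ coprime to $p$ Corollary~\ref{corollary separation} together with $\gamma_{r+1}(G)\le\gamma_r(G)^p$ gives $[\delta,x_r^i]\equiv c^i\pmod{\gamma_r(G)^p}$, so $\gamma_r(G)=\langle[\delta,x_r^i]\rangle\gamma_r(G)^p$. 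Lemma~\ref{lemma petrescohard} with $h=x_r^i$ then delivers
\[
[\delta,x_r^i]^{p^k}\equiv[\delta,x_r^{ip^k}]\pmod{\gamma_r(G)^{p^{k+1}}},
\]
and since $\gamma_r(G)$ is abelian one also has $[\delta,x_r^i]^{p^k}\equiv c^{ip^k}\pmod{\gamma_r(G)^{p^{k+1}}}$. Comparing these congruences places $[\delta,x_r^{ip^k}]$ in the coset $Nc^{ip^k}$.

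The main obstacle I expect is the verification of hypothesis~(1) of Lemma~\ref{lemma petrescohard}, which boils down to the containment $[\gamma_r(G),G,G]\le\gamma_r(G)^{p^2}$. This is the step where both the cyclicity of $\gamma_r(G)$ and the assumption that $p$ is odd genuinely enter the argument.
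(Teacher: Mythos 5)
Your proposal is correct and follows essentially the same route as the paper: fix a single $\gamma_r$-value $c=[x_1,\ldots,x_r]$ generating the cyclic subgroup $\gamma_r(G)$, establish the congruence $[\delta,x_r^i]^{p^k}\equiv[\delta,x_r^{ip^k}]\pmod{\gamma_r(G)^{p^{k+1}}}$ with $\delta=[x_1,\ldots,x_{r-1}]$, and climb the chain of power subgroups by downward induction using Lemma~\ref{2.3DF}. The only (inessential) difference is that the paper derives this congruence by a direct Hall--Petresco computation, estimating $c_i\in\gamma_{r+i-1}(G)\le\gamma_r(G)^{p^{i-1}}$, whereas you route it through Lemma~\ref{lemma petrescohard}, whose hypothesis~(1) you correctly verify at $j=r$ via $[\gamma_r(G),G,G]\le\gamma_r(G)^{p^2}$ (this containment, coming from the cyclicity of $\gamma_r(G)$ and $p$ odd, is the same structural fact the paper uses implicitly).
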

\begin{proof}
Let $\gamma_r(G)=\langle[x_1,\ldots,x_r]\rangle$ with $x_1,\ldots,x_r\in G$.
Then,
$$
\gamma_r(G)^{p^k}=\langle[x_1,\ldots,x_r]^{p^k}\rangle
$$
for every $k\ge 1$.
By the Hall-Petresco Identity, we have
$$
[x_1,\ldots,x_r]^{p^k}=[x_1,\ldots,x_r^{p^k}]c_2^{\binom{p^k}{2}}\ldots c_{p^k}
$$
with $c_i\in\gamma_i(\langle[x_1,\ldots,x_r],x_r\rangle)$.
When $i<p^k$, we have $c_i\in\gamma_{r+i-1}(G)\le\gamma_r(G)^{p^{i-1}}$, and so $c_i^{\binom{p^k}{i}}\in \gamma_r(G)^{p^{k+1}}$ since $p\ge 3$.
If $i=p^k$, then $c_{p^k}\in\gamma_{r+p^k-1}(G)\le \gamma_r(G)^{p^{p^k-1}}\le\gamma_r(G)^{p^{k+1}}$.
Therefore,
$$
\gamma_r(G)^{p^k}=\langle[x_1,\ldots,x_r^{p^k}]\rangle
$$
for every $k\ge 0$.
Moreover, since $[x_1,\ldots,x_r^{p^k},G]\le\gamma_r(G)^{p^{k+1}}$, we have
$$
[x_1,\ldots,x_r^{p^k}]^i\equiv[x_1,\ldots,x_r^{ip^k}]\pmod{\gamma_r(G)^{p^{k+1}}}
$$
for every $i\ge0$, so the result follows from Lemma \ref{2.3DF}.
\end{proof}

\begin{theorem}
\label{theorem cyclic p=2}
Let $G$ be a finite $2$-group with $\gamma_r(G)$ cyclic.
Then
$$\gamma_r(G)=\{[g_1,\ldots,g_r]\mid g_1,\ldots,g_r\in G\}.$$
\end{theorem}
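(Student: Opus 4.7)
The proof proceeds in parallel to that of Theorem \ref{theorem p odd}, but uses Lemma \ref{lemma petrescohard}(2) in place of the direct Hall--Petresco computation, since for $p=2$ the binomial coefficient $\binom{2^k}{2}$ does not have a sufficiently high $2$-adic valuation to absorb the error terms. I may assume $|\gamma_r(G)| = 2^m$ with $m \geq 2$ (the cases $m \leq 1$ being immediate) and choose $x_1,\dots,x_r \in G$ such that $c := [x_1,\dots,x_r]$ generates $\gamma_r(G)$; such a choice exists because $\gamma_r(G)$ is cyclic and generated by $\gamma_r$-values. Setting $L_k := \gamma_r(G)^{2^k}$, I obtain a chain $\gamma_r(G) = L_0 > L_1 > \cdots > L_m = 1$ with factors of order $2$.

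The plan is to verify the base hypothesis of Lemma \ref{lemma petrescohard}(2), namely
\[
c^{2} \;\equiv\; [x_1,\dots,x_{j-1}, x_j^{2}, x_{j+1},\dots,x_r] \pmod{L_2}
\]
for a suitable slot $j$. Once this holds, the lemma yields $c^{2^k} \equiv [x_1,\dots,x_j^{2^k},\dots,x_r] \pmod{L_{k+1}}$ for every $k \geq 0$, so that $[x_1,\dots,x_j^{2^k},\dots,x_r]$ generates the unique non-trivial coset of $L_{k+1}$ in $L_k$. An iterated application of Lemma \ref{2.3DF} along the chain $L_m = 1 < L_{m-1} < \cdots < L_0 = \gamma_r(G)$ then gives $\gamma_r(G) = \{[x_1,\dots,x_{j-1}, g, x_{j+1},\dots,x_r] : g \in G\}$, which implies the statement.

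To establish the base hypothesis, expand using $[z, h^2] = [z, h]^2 \cdot [z, h, h]$ with $z = [x_1,\dots,x_{j-1}]$ and $h = x_j$; after iterating commutators with $x_{j+1},\dots,x_r$ and absorbing the deep errors into $L_2$, the task reduces to showing that $[x_1,\dots,x_{j-1}, h, h, x_{j+1},\dots,x_r] \in L_2$. Since $\gamma_r(G)$ is cyclic, by Lemma \ref{lemma aut} the action map $\pi \colon G \to \Aut(\gamma_r(G)) \cong (\mathbb{Z}/2^m\mathbb{Z})^{\times}$ factors through $G^{\mathrm{ab}}$. Once the nested commutator enters $\gamma_r(G)$, each subsequent bracket with a variable $y$ multiplies the resulting element of $\gamma_r(G)$ by $\pi(y) - 1$, whose $2$-adic valuation is at least $2$ exactly when $y \in K := \pi^{-1}(1+4\mathbb{Z})$. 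Consequently, if $\pi(h) \equiv 1 \pmod 4$ (when $j = r$) or $\pi(x_r) \equiv 1 \pmod 4$ (when $j < r$), the error lies in $L_2$, as required.

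The main obstacle is the joint requirement that $c$ generate $\gamma_r(G)$ while the appropriate variable lies in $K$. If $K = G$ this is automatic; otherwise $K$ has index $2$ in $G$, and one analyses the multilinear form $\bar{c}\colon (G/G')^r \to \mathbb{F}_2$ induced by the commutator modulo $L_1$. Varying the slot $j$ and exploiting that $G^{\mathrm{ab}}$ is non-cyclic (forced by $\gamma_r(G) \neq 1$, hence $d(G) \geq 2$) yields enough flexibility to place one of the $x_i$ inside $K$ without spoiling the generating property; in the most recalcitrant subcase one resorts to induction on $|\gamma_r(G)|$, reducing modulo the unique subgroup of order $2$ in $\gamma_r(G)$ and using Lemma \ref{2.3DF} to lift the result.
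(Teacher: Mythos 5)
Your skeleton coincides with the paper's: the same chain $\gamma_r(G)^{2^k}$, the same reduction of the base case to showing that the error term $[y_1,\ldots,y_{j-1},h,h,x_{j+1},\ldots,x_r]$ lies in $\gamma_r(G)^4$, Lemma \ref{lemma petrescohard}(ii) to propagate the congruence to all $k$, and a downward induction along the chain to finish (the paper uses Lemma \ref{lemma linking} rather than Lemma \ref{2.3DF}; note that Lemma \ref{2.3DF} as stated only covers words of the form $[x,g]$, i.e.\ $j=r$, and Lemma \ref{lemma linking} forces you to verify the coset condition for all conjugates $y_i\in x_i^G$, which you do not address). Your subgroup $K=\pi^{-1}(1+4\Z)$ is exactly the paper's $C=C_G(\gamma_r(G)/\gamma_r(G)^4)$. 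The problem is that the one genuinely delicate point --- producing a generating $\gamma_r$-value with a variable from this index-$\le 2$ subgroup sitting in a position that kills the error term --- is precisely where your argument stops being a proof. Phrases like ``yields enough flexibility to place one of the $x_i$ inside $K$ without spoiling the generating property'' and ``in the most recalcitrant subcase one resorts to induction on $|\gamma_r(G)|$'' describe a hope, not an argument; you neither exhibit the case division for the multilinear form $\bar c$ nor explain how the proposed induction would be set up or why it terminates. Moreover, your valuation bookkeeping requires the \emph{last} variable $x_r$ to lie in $K$ when $j<r$, whereas nothing in your construction guarantees this, and you also never ensure $j\ge 2$, which Lemma \ref{lemma petrescoeasy} needs.

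The paper closes this gap with a single maximality trick that replaces your entire final paragraph. Among all $\gamma_r$-values $[x_1,\ldots,x_r]$ that generate $\gamma_r(G)$, choose one maximizing the largest index $j$ with $x_j\in C$ (one has $j\ge 2$ because $G'=[G,C]$). If the error term $[y_1,\ldots,y_j,y_j,y_{j+1},\ldots,y_r]$ were not in $\gamma_r(G)^4$, it would generate $\gamma_{r+1}(G)=\gamma_r(G)^2$, forcing $[y_1,\ldots,y_j,y_j,y_{j+1},\ldots,y_{r-1}]$ to generate $\gamma_r(G)$; but this is a $\gamma_r$-value with a $C$-variable in position $j+1$, contradicting the maximality of $j$. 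No case analysis on $K=G$ versus $[G:K]=2$, no study of the form $\bar c$, and no auxiliary induction on $|\gamma_r(G)|$ are needed. Until you replace your closing paragraph with an argument of comparable precision, the proof is incomplete at its central step.
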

\begin{proof} Define $C=C_G(\gamma_r(G)/\gamma_r(G)^{4})$.
Since $\gamma_r(G)$ is cyclic, the quotient group $\gamma_r(G)/\gamma_r(G)^{4}$ has order $4$, so that $|G:C|\le 2$.
Let $\gamma_r(G)=\langle[x_1,\ldots,x_r]\rangle$ with $x_1,\ldots,x_r\in G$ and let $j$ be the maximum 
number such that $x_j\in C$.
Assume, in addition, that $[x_1,\ldots,x_r]$ is, among all $\gamma_r$-values which are generators of $\gamma_r(G)$,
the one with maximum $j$ (observe that $j\ge 2$ since $G'=[G,C]$).

For every $i=1,\ldots,r$ consider an arbitrary element $y_i\in x_i^G$, so that $y_i=x_i[x_i,g]$ for some $g\in G$.
Since $\gamma_{r+1}(G)\le\gamma_r(G)^2$, it follows from Corollary \ref{corollary separation} that
$$[y_1,\ldots,y_r]\equiv [x_1,\ldots,x_r]\pmod{\gamma_r(G)^2},$$
and since $\gamma_r(G)^2=\Phi(\gamma_r(G))$, we have
$$
\gamma_r(G)=\langle[y_1,\ldots,y_r]\rangle.
$$
Therefore
$$
\gamma_{r}(G)^{2^k}=\langle[y_1,\ldots,y_r]^{2^k}\rangle
$$
for every $k\ge 1$.
We claim that
$$
[y_1,\ldots,y_r]^{2^k}\equiv[y_1,\ldots,y_j^{2^k},\ldots,y_r]\pmod{\gamma_r(G)^{2^{k+1}}}
$$
for every $y_i\in x_i^G$ and $k\ge 1$.
Take $k=1$ first.
By lemma \ref{lemma petrescoeasy} we have
$$
[y_1,\ldots,y_r]^{2}\equiv[[y_1,\ldots,y_j]^{2},y_{j+1},\ldots,y_r]\pmod{\gamma_r(G)^4},
$$
and observe that
$$
[y_1,\ldots,y_j^{2},\ldots,y_r]=[[y_1,\ldots,y_j]^{2}[y_1,\ldots,y_j,y_j],y_{j+1},\ldots,y_r].
$$
If 
$$
[y_1,\ldots,y_j,y_j,y_{j+1},\ldots,y_r]\not\in \gamma_r(G)^4,
$$
then
$$
\gamma_{r+1}(G)=\gamma_r(G)^2=\langle[y_1,\ldots,y_j,y_j,y_{j+1},\ldots,y_r]\rangle,
$$
and so
$$
\gamma_{r}(G)=\langle[y_1,\ldots,y_j,y_j,y_{j+1},\ldots,y_{r-1}]\rangle,
$$
which contradicts the maximality of $j$ in the choice of the generator $[x_1,\ldots,x_r]$.

Hence,
$$
[y_1,\ldots,y_j,y_j,y_{j+1},\ldots,y_r]\in \gamma_r(G)^4,
$$
so that
$$
[y_1,\ldots,y_r]^{2}\equiv[y_1,\ldots,y_j^{2},\ldots,y_r]\pmod{\gamma_r(G)^{4}}.
$$
The claim follows now from Lemma \ref{lemma petrescohard} with $L=\gamma_r(G)$, $N=\gamma_r(G)^p$.

Now we can conclude our proof.
Let $2^m$ be the order of $\gamma_r(G)$.
We will prove by 
induction on $m-k$ that
$$
\gamma_r(G)^{2^k}\subseteq \{[g_1,\ldots,g_r]\mid g_1,\ldots,g_r\in G\}.
$$
The result is true when $k=m$, so assume $k<m$ and
$$
\gamma_r(G)^{2^{k+1}}\subseteq \{[g_1,\ldots,g_r]\mid g_1,\ldots,g_r\in G\}.
$$
We apply Lemma \ref{lemma linking} with $L=\gamma_r(G)^{2^{k-1}}$ and
$N=\gamma_r(G)^{2^{k}}$.
As
$$
L=[y_1,\ldots,y_j^{2^k},\ldots,y_r]N\cup N\subseteq 
\bigcup_{g\in G}\gamma_r(y_1,\ldots,y_{j-1},g,y_{j+1},\ldots,y_r)N
$$\vspace{-8pt}

\noindent for every $y_i\in x_i^G$, by Lemma \ref{lemma linking} 
we get
$$
\gamma_r(G)^{2^{k}}\subseteq \{[g_1,\ldots,g_r]\mid g_1,\ldots,g_r\in G\}.
$$
In particular, when $k=0$ we obtain
$$
\gamma_r(G)\subseteq \{[g_1,\ldots,g_r]\mid g_1,\ldots,g_r\in G\},
$$
as we wanted.
\end{proof}

Thus, combining Theorem \ref{theorem p odd} and Theorem \ref{theorem cyclic p=2} we get the result for all primes when $\gamma_r(G)$ is cyclic.

\section{Preliminaries for the proof of Theorem A when $\gamma_r(G)$ is generated by $2$ elements}
\label{section preliminaries non-cyclic}

We will use the following notation: if $H,K$ are subgroups of a group $G$, by $U\max\nolimits_{H}\hspace{.09 em}K$
we mean that $U$ is maximal among the proper subgroups of $K$ which are normalized by $H$, while $U\max K$ simply 
means that $U$ is a maximal subgroup of $K$.

The subgroups defined in Definition \ref{definition D} and Definition \ref{definition E} will be essential in our proof.

\begin{definition}
\label{definition D}
Let $G$ be a finite $p$-group and let $U\maxG \gamma_r(G)$ for some $r\ge 2$.
We define
$$
D_r(U)=C_{\gamma_{r-1}(G)}(G/U).
$$
In other words, for $x\in\gamma_{r-1}(G)$ we have $x\in D_r(U)$ if and only if $[x,G]\le U$.
\end{definition}

\begin{definition}
\label{definition E}
Let $G$ be a finite $p$-group and let $U\max_{\gamma_{r-1}(G)} \gamma_r(G)$ for some $r\ge 2$.
We define
$$
E_r(U)=C_G(\gamma_{r-1}(G)/U).
$$
In other words, $x\in E_r(U)$ if and only if $[x,\gamma_{r-1}(G)]\le U$.
\end{definition}

\begin{remark}
The subset $E(U)$ may not be a subgroup of $G$ if $U$ is not normal in $G$.
\end{remark}

The significance of these subgroups becomes clear in the following Lemma.

\begin{lemma}
\label{lemma DE}
Let $G$ be a finite $p$-group and let $r\ge 2$.
Then, for $x\in\gamma_{r-1}(G)$, we have $\gamma_r(G)=[x,G]$ if and only if
$$
x\not\in \bigcup\{D_r(U)\mid U\maxG \gamma_r(G)\}.
$$
Similarly, $\gamma_r(G)=[\gamma_{r-1}(G),y]$ if and only if
$$
y\not\in\bigcup\{E_r(U)\mid U\,\text{\emph{max}}_{\gamma_{r-1}(G)}\gamma_r(G)\}.
$$
\end{lemma}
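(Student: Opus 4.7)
The statement is really two equivalences of the same flavor, and I would prove both by the identical two-step strategy: the forward implications are immediate from Definitions \ref{definition D} and \ref{definition E}, while the converses reduce to showing that $[x,G]$ and $[\gamma_{r-1}(G),y]$ are invariant under the appropriate action, so that failure to exhaust $\gamma_r(G)$ would force them inside one of the enumerated maximal subgroups.

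For the forward direction of the first equivalence I would assume $\gamma_r(G)=[x,G]$ together with $x\in D_r(U)$ for some $U\maxG\gamma_r(G)$; by Definition \ref{definition D} this gives $[x,G]\le U$, hence $\gamma_r(G)\le U$, which contradicts the properness of $U$. The analogous argument handles the forward direction of the $E_r$-assertion, using that $[\gamma_{r-1}(G),y]$ and $[y,\gamma_{r-1}(G)]$ generate the same subgroup.

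For the converse in the $D_r$-case, the key step is to verify that $H:=[x,G]$ is normal in $G$. Using Lemma \ref{comm id}(ii), the rewriting $[x,g]^h=[x,h]^{-1}[x,gh]$ displays the $h$-conjugate of every generator of $H$ as a product of generators of $H$, so $H\trianglelefteq G$. Since $x\in\gamma_{r-1}(G)$, we also have $H\le\gamma_r(G)$. If $H$ were proper in $\gamma_r(G)$, finiteness of $G$ would let me enlarge $H$ to some $U\maxG\gamma_r(G)$, which would place $x\in D_r(U)$ and contradict the hypothesis; therefore $H=\gamma_r(G)$.

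For the converse in the $E_r$-case I would run the same argument on $K:=[\gamma_{r-1}(G),y]$, this time using the identity $[zw,y]=[z,y]^w[w,y]$, rearranged as $[z,y]^w=[zw,y][w,y]^{-1}$, to conclude that $K$ is normalized by $\gamma_{r-1}(G)$. If $K\ne\gamma_r(G)$, I would enlarge $K$ to a subgroup $U$ maximal among proper $\gamma_{r-1}(G)$-invariant subgroups of $\gamma_r(G)$, concluding that $y\in E_r(U)$. There is essentially no obstacle here; the only point worth flagging is the Remark that $E_r(U)$ may fail to be a subgroup, but since only membership is tested this causes no difficulty.
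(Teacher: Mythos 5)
Your proposal is correct and follows essentially the same route as the paper's proof: both reduce each equivalence to the observation that $[x,G]$ is normal in $G$ (respectively, that $[\gamma_{r-1}(G),y]$ is normalized by $\gamma_{r-1}(G)$), so that properness of this subgroup in $\gamma_r(G)$ is equivalent to its being contained in some maximal invariant subgroup $U$, i.e.\ to membership in $D_r(U)$ or $E_r(U)$. The paper states this more tersely, while you supply the commutator identities justifying normality, but the argument is the same.
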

\begin{proof}
The proof is essentially the same as the one of Lemma 2.9 of \cite{DF}.
Let $x \in\gamma_{r-1}(G)$.
Since $[x, G]$ is a normal subgroup of $G$, we have $[x, G] < \gamma_r(G)$ if and
only if $x\in D_r(U)$ for some $U\maxG \gamma_r(G)$, and the first assertion follows.
Similarly, since $[\gamma_{r-1}(G), y]$ is normalized by $\gamma_{r-1}(G)$, we have $[\gamma_{r-1}(G), y] < \gamma_r(G)$ if and
only if $y\in E_r(U)$ for some $U\max\nolimits_{\gamma_{r-1}(G)}\hspace{.09 em}\gamma_r(G)$.
\end{proof}

\begin{lemma}
\label{lemma DE2}
Let $G$ be a finite $p$-group with $d(\gamma_r(G))=2$ for some $r\ge 2$.
Let $U,V,W\maxG\gamma_{r}(G)$ with $V\neq W$ and $R,S,T\max\nolimits_{\gamma_{r-1}(G)}\gamma_r(G)$ with $S\neq T$.
Then,
\begin{enumerate}
    \item $D_r(U)\neq \gamma_{r-1}(G)$ and $E_r(R)\neq G$.
    \item $D_r(V)\cap D_r(W)\le D_r(U)$ and $E_r(S)\cap E_r(T)\subseteq E_r(R)$.
    \item If $U\neq R$, then $[D_r(U),E_r(R)]\le\gamma_r(G)^p$.
\end{enumerate}
\end{lemma}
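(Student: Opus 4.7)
The plan is to reduce all three statements to the following structural fact: every $U\maxG\gamma_r(G)$ and every $R\max\nolimits_{\gamma_{r-1}(G)}\gamma_r(G)$ has index exactly $p$ in $\gamma_r(G)$ and contains $\gamma_r(G)^p$. Since $d(\gamma_r(G))=2$, Lemma \ref{lemma powerful} makes $\gamma_r(G)$ powerful, so $\Phi(\gamma_r(G))=\gamma_r(G)^p$ has index $p^2$ in $\gamma_r(G)$. Once the structural fact is in place, everything follows from the observation that any two distinct index-$p$ subgroups of $\gamma_r(G)$ both containing $\gamma_r(G)^p$ must intersect precisely in $\gamma_r(G)^p$.

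To prove the structural fact, if $U\maxG\gamma_r(G)$ did not contain $\gamma_r(G)^p=\Phi(\gamma_r(G))$, then $U\Phi(\gamma_r(G))$ would be a $G$-normal subgroup properly containing $U$, hence equal to $\gamma_r(G)$ by maximality, which forces $U=\gamma_r(G)$, a contradiction. So $\gamma_r(G)^p\le U$, and $\gamma_r(G)/U$ is a nontrivial quotient of an elementary abelian group of order $p^2$. If $|\gamma_r(G):U|=p^2$, then the image of $G$ in $\Aut(\gamma_r(G)/\gamma_r(G)^p)\cong GL_2(\F_p)$, being a $p$-group, is unipotent and thus fixes at least one line; pulling such a line back to $\gamma_r(G)$ produces a $G$-normal subgroup strictly between $U$ and $\gamma_r(G)$, contradicting the maximality of $U$. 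Hence $|\gamma_r(G):U|=p$, and the same argument with $\gamma_{r-1}(G)$ in place of $G$ handles $R$.

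With the structural fact in hand the three assertions are short. For (i), $D_r(U)=\gamma_{r-1}(G)$ would give $\gamma_r(G)=[\gamma_{r-1}(G),G]\le U$, contradicting $U<\gamma_r(G)$, and $E_r(R)=G$ is ruled out identically. For (ii), since $V\ne W$ and both contain $\gamma_r(G)^p$ with index $p$, we have $V\cap W=\gamma_r(G)^p\le U$, so $x\in D_r(V)\cap D_r(W)$ satisfies $[x,G]\le V\cap W\le U$ and thus $x\in D_r(U)$; the $E_r$ inclusion is analogous. For (iii), when $U\ne R$ the structural fact gives $U\cap R=\gamma_r(G)^p$, and for $x\in D_r(U)$, $y\in E_r(R)$ the commutator $[x,y]$ lies in $[x,G]\le U$ and equally in $[\gamma_{r-1}(G),y]\le R$, so $[x,y]\in\gamma_r(G)^p$. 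The main obstacle is the structural fact itself, but it rests only on the Frattini characterisation available in powerful $p$-groups and on the standard observation that any $p$-subgroup of $GL_2(\F_p)$ is unipotent and hence fixes a line.
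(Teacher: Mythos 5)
Your proof is correct and takes essentially the same route as the paper: part (i) comes from $[\gamma_{r-1}(G),G]=\gamma_r(G)$, and parts (ii) and (iii) come from the fact that distinct maximal relatively normal subgroups of the powerful group $\gamma_r(G)$ all contain and pairwise intersect in $\Phi(\gamma_r(G))=\gamma_r(G)^p$. The only difference is that you spell out explicitly (via the Frattini argument and the observation that a $p$-subgroup of $\mathrm{GL}_2(\F_p)$ fixes a line) the structural fact that such maximal subgroups contain $\gamma_r(G)^p$ with index $p$, which the paper's proof uses without comment.
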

\begin{proof}
(i) is obvious, since $D_r(U)=\gamma_{r-1}(G)$ implies that $\gamma_r(G)\le U$ and similarly $E_r(R)=G$ implies that $\gamma_r(G)\le R$, and in both cases we have a contradiction.
   
We now prove (ii).
As $d(\gamma_r(G))=2$, the subgroup $\gamma_r(G)$ is powerful by Lemma \ref{lemma powerful}, so $\gamma_r(G)^p=\Phi(\gamma_r(G))$.
Hence, $V\cap W\le \gamma_r(G)^p\le U$ and $S\cap T\le \gamma_r(G)^p\le R$.
Then, the result follows from the fact that $x\in D_r(V)\cap D_r(W)$ if and only if $[x,G]\le V\cap W$ and $y\in E_r(S)\cap E_r(T)$ if and only if $[y,\gamma_{r-1}(G)]\le S\cap T$.
     
(iii) is true because $[D_r(U),E_r(R)]\le U\cap R\le\gamma_r(G)^p$.
\end{proof}

The following subgroup plays a fundamental role in \cite{G}, \cite{DF} and \cite{D}, and so does in our proof.

\begin{definition}
Let $G$ be a finite $p$-group.
We define
$$
C_r(G)=C_G(\gamma_r(G)/\gamma_r(G)^p).
$$
\end{definition}

\begin{lemma}
\label{lemma C}
Let $G$ be a finite $p$-group with $d(\gamma_r(G))=2$ for some $r\ge 2$.
Then:
\begin{enumerate}
    \item $|G:C_r(G)|\le p.$
    
    \item We have $G=C_r(G)$ if and only if $\gamma_{r+1}(G)\le \gamma_r(G)^p$.
    In this case, all subgroups $U$  such that $\gamma_r(G)^p< U<\gamma_r(G)$ are normal in $G$.
    Otherwise, $C_r(G)\neq G$ and there is only one normal subgroup $U$ of $G$  such that $\gamma_r(G)^p<U<\gamma_r(G)$, namely $U=\gamma_{r+1}(G)\gamma_r(G)^p$.
    
    \item We have $[\gamma_r(G)^{p^k},C_r(G)]\le\gamma_r(G)^{p^{k+1}}$ for all $k\ge 0$.
\end{enumerate}
\end{lemma}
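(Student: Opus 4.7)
Throughout, I will use that by Lemma \ref{lemma powerful} the hypothesis $d(\gamma_r(G))=2$ makes $\gamma_r(G)$ powerful, so $\gamma_r(G)^p=\Phi(\gamma_r(G))$ and the quotient $\gamma_r(G)/\gamma_r(G)^p$ is elementary abelian of order exactly $p^2$. The conjugation action of $G$ on this quotient gives a representation $G\to\Aut(\gamma_r(G)/\gamma_r(G)^p)$ whose kernel is precisely $C_r(G)$; parts (i) and (ii) reduce to a linear-algebra analysis of this representation, while (iii) is a commutator-arithmetic consequence of the powerful structure.

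\emph{For (i).} The image of $G$ in $\Aut(\gamma_r(G)/\gamma_r(G)^p)\cong\mathrm{GL}_2(\F_p)$ is a finite $p$-group, so it embeds in a Sylow $p$-subgroup of $\mathrm{GL}_2(\F_p)$, which has order $p$. Hence $|G:C_r(G)|\le p$.

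\emph{For (ii).} The equivalence $G=C_r(G)\Longleftrightarrow\gamma_{r+1}(G)\le\gamma_r(G)^p$ is immediate from $[\gamma_r(G),G]=\gamma_{r+1}(G)$. If equality holds, the action is trivial, so every $U$ with $\gamma_r(G)^p<U<\gamma_r(G)$ is $G$-invariant and hence normal. Otherwise, by (i), $G/C_r(G)$ is cyclic of order $p$, generated say by $gC_r(G)$, and $g$ acts on $\F_p^2$ as a non-identity unipotent element; such an element has a unique invariant line, namely its fixed subspace, which coincides with the image of $g-1$. Translating back, this image is $[\gamma_r(G),g]\gamma_r(G)^p/\gamma_r(G)^p=\gamma_{r+1}(G)\gamma_r(G)^p/\gamma_r(G)^p$, where the second equality uses $[\gamma_r(G),C_r(G)]\le\gamma_r(G)^p$. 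One finally checks strictness of the two inclusions $\gamma_r(G)^p<\gamma_{r+1}(G)\gamma_r(G)^p<\gamma_r(G)$: the lower one because $\gamma_{r+1}(G)\not\le\gamma_r(G)^p$ by hypothesis, and the upper one because $\gamma_r(G)=\gamma_{r+1}(G)\Phi(\gamma_r(G))$ would force $\gamma_r(G)=\gamma_{r+1}(G)$ by Burnside's basis theorem, contradicting nilpotency.

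\emph{For (iii).} I would induct on $k$, with $k=0$ being the definition of $C_r(G)$. For the inductive step, write $L=\gamma_r(G)^{p^{k-1}}$ and apply Lemma \ref{lemma prelim}(i) to obtain
\[
[\gamma_r(G)^{p^k},C_r(G)]=[L^p,C_r(G)]\le[L,C_r(G)]^p\,[L,C_r(G),L].
\]
By induction $[L,C_r(G)]\le\gamma_r(G)^{p^k}$, so the first factor lies in $(\gamma_r(G)^{p^k})^p=\gamma_r(G)^{p^{k+1}}$ via the powerful-group identity. The second factor is contained in $[\gamma_r(G)^{p^k},\gamma_r(G)]$, which I would bound using the standard powerful-group inequality $[P^{p^k},P]\le P^{p^{k+1}}$ applied to $P=\gamma_r(G)$; this inequality is itself provable by a Hall--Petresco expansion together with the fact that in a powerful $p$-group $P$ one has $\gamma_i(P)\le P^{p^{i-1}}$.

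\emph{Main obstacle.} The delicate step is the commutator arithmetic in (iii): handling the tail commutator $[L,C_r(G),L]$ requires the powerful-group identity $[P^{p^k},P]\le P^{p^{k+1}}$, which is not entirely trivial and must be justified via Hall--Petresco combined with $\gamma_i(P)\le P^{p^{i-1}}$ and a careful count of the $p$-adic valuations of the binomial coefficients $\binom{p^k}{i}$. By comparison, the linear-algebra picture in (ii) is conceptually cleaner, but still demands the translation from ``image of $g-1$ on $\F_p^2$'' into the algebraic identification with $\gamma_{r+1}(G)\gamma_r(G)^p$.
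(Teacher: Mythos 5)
Your proposal is correct and follows essentially the same route as the paper: part (i) via the embedding of $G/C_r(G)$ into a Sylow $p$-subgroup of $\Aut(\gamma_r(G)/\gamma_r(G)^p)$, and part (iii) by the identical induction using $[L^p,N]\le[L,N]^p[L,N,L]$ together with the powerfully-embedded property $[P^{p^k},P]\le P^{p^{k+1}}$. The only (cosmetic) divergence is in (ii), where you identify the unique $G$-invariant line as the fixed space, equal to the image of $g-1$, of a nontrivial unipotent element, whereas the paper counts the $p+1$ subgroups between $\gamma_r(G)^p$ and $\gamma_r(G)$ and their conjugacy classes; both arguments land on $U=\gamma_{r+1}(G)\gamma_r(G)^p$.
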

\begin{proof}
By Lemma \ref{lemma powerful} the subgroup $\gamma_r(G)$ is powerful, so $\gamma_r(G)/\gamma_r(G)^p$ is an elementary abelian $p$-group of rank $2$.
Now (i) follows from the fact that the quotient group $G/C_r(G)$ embeds in a Sylow $p$-subgroup of the automorphism group of $\gamma_r(G)/\gamma_r(G)^p$.

To prove (ii), we may assume that $\gamma_r(G)^p=1$.
There are precisely $p+1$ non-trivial proper subgroups of $\gamma_r(G)$, all cyclic of order $p$, and each of them is normal in $G$ if and only if it is central.
In addition, all such subgroups are central if and only $G=C_r(G)$, which is equivalent to $\gamma_{r+1}(G)=1$.
If there exists a non central subgroup $U$ of $G$ with $1\ne U< \gamma_r(G)$ then the conjugacy class of $U$ has size $p$, $C_r(G)\neq G$ and $\gamma_{r+1}(G)\ne 1$ is the only non-trivial normal subgroup of $G$ properly contained in $\gamma_r(G)$.
This proves (ii).

The proof of (iii) is an easy induction on $k$.
The base of the induction is given by the definition of $C_r(G)$, and if $k>0$
then
\begin{align*}
    [\gamma_r(G)^{p^k},C_r(G)]&\le [\gamma_r(G)^{p^{k-1}},C_r(G)]^p[\gamma_r(G)^{p^{k-1}},C_r(G),\gamma_r(G)^{p^{k-1}}] \\
    &\le\gamma_r(G)^{p^{k+1}}[\gamma_r(G)^{p^{k}},\gamma_r(G)]\le \gamma_r(G)^{p^{k+1}}
\end{align*}
by using the inductive hypothesis and the fact that $\gamma_r(G)$ is powerful.
\end{proof}

In the case $r=2$, i.e. when we deal with the common commutator word, we will also need the next lemma, which is just Lemma 2.9 (i) of \cite{DF}.

\begin{lemma}
\label{lemma C2}
If $G$ is a non-abelian finite $p$-group with $d(G')\le 2$, then for every $U\maxG G'$, we have $D_2(U)\le C_2(G)$.
\end{lemma}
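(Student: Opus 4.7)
The plan is to split into cases on $d(G')$ and on whether $G=C_2(G)$, reducing to a single substantive case that is then dispatched by the Hall--Witt identity.

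I would first invoke Lemma~\ref{lemma powerful} (with $w=\gamma_2$) to see that $G'$ is powerful, so $G'/G'^p$ is elementary abelian of rank $d(G')\le 2$ and every $U\maxG G'$ contains $G'^p=\Phi(G')$. If $d(G')\le 1$ then $G'$ is cyclic, $U=G'^p$ is the only member of $\maxG G'$, and $[x,G']\le[x,G]\le G'^p$ immediately yields $x\in C_2(G)$ for every $x\in D_2(U)$. If $d(G')=2$ and $G=C_2(G)$, the conclusion $D_2(U)\le G=C_2(G)$ is trivial. This leaves the case $d(G')=2$ with $G\ne C_2(G)$, where Lemma~\ref{lemma C} gives $|G:C_2(G)|=p$ and singles out $U=\gamma_3(G)G'^p$ as the unique element of $\maxG G'$.

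I would then set $V=G'/G'^p$ and $\bar U=U/G'^p$, and analyse the action of $G$ on $V$, which factors through $G/C_2(G)\cong\Z/p$ and is generated by a non-trivial unipotent $\sigma\in\Aut(V)$. Since $\bar U$ is the unique $1$-dimensional $G$-invariant subspace of $V$, it must coincide with the fixed subspace of $\sigma$; and because $G$ is a $p$-group while $\Aut(\bar U)\cong\F_p^\times$ has order coprime to $p$, $G$ must act trivially on $\bar U$, giving the crucial identity $[U,G]\le G'^p$.

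The main argument would then proceed by contradiction: assume some $x\in D_2(U)\setminus C_2(G)$. For any $a,b\in G$ the Hall--Witt identity gives
$$[x,a^{-1},b]^a\cdot[a,b^{-1},x]^b\cdot[b,x^{-1},a]^x=1.$$
Both $[x,a^{-1}]$ and $[b,x^{-1}]=[x,b]^{x^{-1}}$ lie in $U$ (using $[x,G]\le U$ and the $G$-normality of $U$), so the outer factors belong to $[U,G]\le G'^p$. Reducing modulo $G'^p$ and applying the standard formula $\overline{[[a,b^{-1}],x]}=(\sigma_x-1)\,\overline{[a,b^{-1}]}$ for commutators in $G'$, the Hall--Witt identity collapses to $\sigma_b\bigl((\sigma_x-1)\,\overline{[a,b^{-1}]}\bigr)=0$ in $V$. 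Since $\sigma_b$ is invertible and $\sigma_x$ is a non-trivial power of the unipotent $\sigma$, we have $\ker(\sigma_x-1)=\bar U$, forcing $[a,b^{-1}]\in U$; as $a,b$ range over $G$ this yields $G'\le U$, contradicting $U<G'$. The main obstacle is precisely this Hall--Witt computation, which critically depends on the identity $[U,G]\le G'^p$ to make two of the three factors vanish modulo $G'^p$ and leave the third as a constraint forcing $\sigma_x=1$.
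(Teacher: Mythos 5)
Your proof is correct. Note that the paper does not actually prove this lemma: it only cites Lemma~2.9(i) of \cite{DF}, so there is no in-paper argument to compare with line by line; the core of your argument (Hall--Witt plus the containment $[U,G]\le (G')^p$) is the standard mechanism behind that reference. Your case analysis and the identification of $U=\gamma_3(G)(G')^p$ in the substantive case are sound, and the Hall--Witt step correctly kills the two outer factors modulo $(G')^p$. Two remarks on economy rather than correctness. First, the containment $[U,G]\le (G')^p$ holds for \emph{every} $U\maxG G'$ without any case split: such a $U$ always contains $\Phi(G')=(G')^p$ (Frattini argument plus Lemma~\ref{lemma powerful}), so $U/(G')^p$ has order $1$ or $p$, and a $p$-group acts trivially on a group of order at most $p$; this makes the unipotent/fixed-space analysis and the appeal to Lemma~\ref{lemma C} unnecessary. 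Second, the final contradiction is a detour: once Hall--Witt gives $[a,b^{-1},x]\in (G')^p$ for all $a,b\in G$, the map $u\mapsto [u,x]$ is a homomorphism modulo $(G')^p$ and the elements $[a,b^{-1}]$ generate $G'$ modulo $\Phi(G')$, so $[G',x]\le (G')^p$ follows directly, i.e.\ $x\in C_2(G)$, with no need to assume $x\notin C_2(G)$ or to invoke $\ker(\sigma_x-1)=U/(G')^p$. In effect this is the three subgroups lemma applied modulo $(G')^p$ to $[G,G,x]$, using $[G,x,G]\le[U,G]\le (G')^p$ and $[x,G,G]\le[U,G]\le (G')^p$.
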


\section{Proof of Theorem A when $C_r(G)=G$}
\label{section non-cyclic C=G}

In order to apply Lemma \ref{lemma petrescohard} we will first find in Lemma \ref{lemma generation C=G} suitable generators for the verbal subgroup $\gamma_r(G)$.
Then, as mentioned before, we will conclude by applying Lemma \ref{lemma linking}.

\begin{lemma}
\label{lemma generation C=G}
Let $G$ be a finite $p$-group with $d(\gamma_r(G))=2$ for some $r\ge 2$.
If $C_r(G)=G$, then there exist an integer $j$ with $1\le j\le r$ and $x_1,\ldots,x_{j-1},x_{j+1},\ldots,x_r\in G$ such that
$$
\gamma_r(G)=\langle[y_1,\ldots,y_{j-1},g,y_{j+1},\ldots,y_r]\mid g\in G\rangle
$$
for every $y_i\in x_i^G$.
\end{lemma}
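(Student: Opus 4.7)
By Lemma \ref{lemma powerful}, $\gamma_r(G)$ is powerful, so $\Phi(\gamma_r(G))=\gamma_r(G)^p$ and $V:=\gamma_r(G)/\gamma_r(G)^p$ is a $2$-dimensional $\F_p$-vector space. The hypothesis $C_r(G)=G$ together with Lemma \ref{lemma C}(ii) yields $\gamma_{r+1}(G)\le\gamma_r(G)^p$ and identifies the subgroups $U\maxG\gamma_r(G)$ as the $p+1$ subgroups $U_1,\ldots,U_{p+1}$ strictly between $\gamma_r(G)^p$ and $\gamma_r(G)$. Iterated application of Corollary \ref{corollary separation} together with $\gamma_{r+1}(G)\le\gamma_r(G)^p$ yields $[y_1,\ldots,y_{j-1},g,y_{j+1},\ldots,y_r]\equiv[x_1,\ldots,x_{j-1},g,x_{j+1},\ldots,x_r]\pmod{\gamma_r(G)^p}$ for every $y_i\in x_i^G$; by Nakayama (applied to the powerful group $\gamma_r(G)$), it therefore suffices to find $j$ and parameters $x_1,\ldots,\widehat{x_j},\ldots,x_r\in G$ for which the group homomorphism $\chi_j\colon G\to V$, $g\mapsto[x_1,\ldots,g,\ldots,x_r]\gamma_r(G)^p$, is surjective.

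First try $j=r$. The assignment $\phi\colon\gamma_{r-1}(G)\to\mathrm{Hom}(G,V)$, $\phi(x)(g)=[x,g]\gamma_r(G)^p$, is a well-defined group homomorphism by $\gamma_{r+1}(G)\le\gamma_r(G)^p$ and Corollary \ref{corollary separation}. Surjectivity of $\chi_r$ at $(x_1,\ldots,x_{r-1})$ amounts, via Lemma \ref{lemma DE}, to $[x_1,\ldots,x_{r-1}]\notin\bigcup_i D_r(U_i)$---equivalently, to $\phi([x_1,\ldots,x_{r-1}])$ being of rank $2$. If some $\gamma_{r-1}$-value has this property, take $j=r$ and we are done.

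Otherwise, every $\gamma_{r-1}$-value $x$ has $\phi(x)=\theta_x\otimes v_x$ of rank at most $1$ in $\mathrm{Hom}(G,V)\cong G^*\otimes V$. For each fixed $(x_2,\ldots,x_{r-1})$, the group homomorphism $T\colon G\to\mathrm{Hom}(G,V)$, $x_1\mapsto\phi([x_1,x_2,\ldots,x_{r-1}])$, has image contained in the rank-$\le 1$ locus; by the classical classification of such subspaces (they are contained in $\theta\otimes V$ for some $\theta\in G^*$ or in $\mathrm{Hom}(G,\ell)$ for some line $\ell\le V$), cross-comparing these alternatives across positions $k=1,\ldots,r-1$ via the multilinearity of the $\gamma_{r-1}$-commutator pins down a single surjective linear functional $\theta\colon G\to\F_p$ with $\phi(x)=\theta\otimes v_x$ for every $x\in\gamma_{r-1}(G)$. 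Setting $H=\ker\theta$ we obtain $[\gamma_{r-1}(G),H]\le\gamma_r(G)^p$, while $[\gamma_{r-1}(G),g_0]\gamma_r(G)^p=\gamma_r(G)$ for any $g_0\in G\setminus H$.

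In this remaining case, set $x_r=g_0\in G\setminus H$ and observe that, for each $j\in\{1,\ldots,r-1\}$, we have $\chi_j(g)=\theta(g_0)\,v_{[x_1,\ldots,g,\ldots,x_{r-1}]}$ with $\theta(g_0)\in\F_p^{\times}$, so surjectivity of $\chi_j$ reduces to surjectivity of the $j$-th slice of the multilinear map $\overline u\colon(G/\Phi(G))^{r-1}\to V$, $(\overline a_1,\ldots,\overline a_{r-1})\mapsto v_{[a_1,\ldots,a_{r-1}]}$. This $\overline u$ is surjective (inheriting from $v\colon\gamma_{r-1}(G)\to V$); a slice-surjectivity lemma for surjective multilinear maps into $\F_p^2$---proved by induction on the number of factors, with the inductive step resting on the same rank-$\le 1$ tensor classification (which in the base case $s=2$ forces the original map to factor through a $\phi\otimes V$ pattern whose other direction is then surjective)---produces a surjective slice at some $j\in\{1,\ldots,r-1\}$, yielding the desired tuple $(x_2,\ldots,x_{r-1})$. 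The main obstacle is the structural analysis in the previous paragraph that reduces the failure of $j=r$ to the existence of a single linear functional $\theta$.
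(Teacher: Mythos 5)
Your overall strategy -- reduce modulo $\Phi(\gamma_r(G))=\gamma_r(G)^p$ to an $\F_p$-multilinear problem and prove a slice-surjectivity statement for multilinear maps into $\F_p^2$ -- is genuinely different from the paper's (which inducts on $r$ by choosing $x_r$ outside the union of the two proper subgroups $E_r(U)$ and $\prod_{V\neq U}E_r(V)$, passing to $G/C_{\gamma_{r-1}(G)}(x_r)$, and using Theorem A of \cite{DF} as the base case), and it is not hopeless. But as written there is a genuine gap at the pivotal step. You claim that if no $\gamma_{r-1}$-value $x$ satisfies $[x,G]\gamma_r(G)^p=\gamma_r(G)$, then ``cross-comparing the alternatives across positions'' produces a \emph{single} functional $\theta$ with $\phi(x)=\theta\otimes v_x$ for \emph{every} $x\in\gamma_{r-1}(G)$, i.e.\ a maximal subgroup $H=\ker\theta$ of $G$ with $[\gamma_{r-1}(G),H]\le\gamma_r(G)^p$. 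The classification of rank-$\le 1$ subspaces of $\mathrm{Hom}(G,V)$ that you invoke applies only to \emph{subspaces}, whereas your hypothesis gives rank $\le 1$ only on the set $\{\phi(x)\mid x\in G_{\gamma_{r-1}}\}$, which is the image of an $(r-1)$-linear map and not a subgroup of $\mathrm{Hom}(G,V)$; a sum of two rank-one values $\theta_1\otimes v_1+\theta_2\otimes v_2$ with independent $\theta$'s and independent $v$'s has rank $2$ but need not be a $\gamma_{r-1}$-value, so no contradiction arises. The possible discrepancy between $G_{\gamma_{r-1}}$ and $\langle G_{\gamma_{r-1}}\rangle$ is exactly the difficulty the whole paper is about, and ``cross-comparing via multilinearity'' is not an argument; for $r\ge 3$ the different fixed tuples $(x_2,\ldots,x_{r-1})$ can a priori yield non-proportional $\theta$'s or land in the ``common image line'' alternative, and you give no mechanism to reconcile them. (For $r=2$ your argument does close, precisely because there $G_{\gamma_1}=\gamma_1(G)$ is a subgroup.)

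The final ingredient, the slice-surjectivity lemma for multilinear maps into $\F_p^2$, is in fact true, but your one-line inductive sketch has the same defect if read literally (images of multilinear maps in $\ge 2$ variables are not subspaces); the correct inductive step applies the rank-$\le 1$ classification to the genuine subspace $\{u(w_1,\cdot,\ldots,\cdot)\mid w_1\in W_1\}$ of multilinear maps, after the inductive hypothesis forces each of its elements to have image in a line. Notably, once that lemma is properly proved, it applies \emph{directly} to the full $r$-linear map $(g_1,\ldots,g_r)\mapsto[g_1,\ldots,g_r]\gamma_r(G)^p$ (multilinear by Corollary \ref{corollary separation} since $\gamma_{r+1}(G)\le\gamma_r(G)^p$) and yields the whole lemma in one stroke, making your ``single $\theta$'' detour unnecessary; the fact that your write-up routes through that unproved and probably stronger intermediate claim is the reason the proposal cannot be accepted as it stands.
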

\begin{proof} We may assume that $\Phi(\gamma_{r}(G))=1$, so using Lemma \ref{lemma C} (ii) we also have $\gamma_{r+1}(G)\le\gamma_r(G)^p=1$.
Notice that it suffices to find an integer $j$ and $x_1,\ldots,x_{j-1},x_{j+1},\ldots,x_r\in G$ such that
$$
\gamma_r(G)=\langle[x_1,\ldots,x_{j-1},g,x_{j+1},\ldots,x_r]\mid g\in G\rangle,
$$
since if $y_i\in x_i^G$, then $y_i=x_ih_i$ for some $h_i\in G'$, so it follows from Corollary \ref{corollary separation} that $[y_1,\ldots,y_{j-1},g,y_{j+1},\ldots,y_r]=[x_1,\ldots,x_{j-1},g,x_{j+1},\ldots,x_r]$.
 
We will proceed by induction on $r$.
If $r=2$, then the result is true by the aforementioned Theorem A of \cite{DF}.

Now, if there exists $x\in G_{\gamma_{r-1}}$ such that $\gamma_r(G)=[x,G]$ then we are done.
Hence, suppose $[x,G]<\gamma_r(G)$ for every $x\in G_{\gamma_{r-1}}$.
Observe that all subgroups $U$ such that $\gamma_r(G)^p\le U\le\gamma_r(G)$ are normal in $G$ by Lemma \ref{lemma C} (ii), so we have
$$
U\maxG\gamma_r(G)\ \ \text{ for every }\ \ U\max\gamma_r(G).
$$

If
$$
D=\prod_{V\max\gamma_r(G)} D_r(V)<\gamma_{r-1}(G),
$$
then we could choose a $\gamma_{r-1}$-value not belonging to $D$, which contradicts Lemma \ref{lemma DE}.
Therefore, assume
$$
\prod_{V\max\gamma_r(G)} D_r(V)=\gamma_{r-1}(G).
$$
Thus, by (i) and (ii) of Lemma \ref{lemma DE2}, there exists $U\max\gamma_r(G)$ such that $D_r(U)$ properly contains $\bigcap\{D_r(V)\mid V\max\gamma_r(G)\}$, and therefore, $[D_r(U),G]=U$.
Now, by Lemma \ref{lemma DE2} (iii), we have $[D_r(U),E_r(V)]=1$ for all $V\neq U$, and so
$$
\prod_{\substack{V\max\gamma_{r}(G)\\U\neq V}}E_r(V)\neq G.
$$
Hence, as $G$ can not be the union of two proper subgroups, we can choose
$$
x_r\in G\setminus \Big(E_r(U)\bigcup\prod_{\substack{V\max\gamma_{r}(G)\\U\neq V}}E_r(V)\Big),
$$
and observe that by Lemma \ref{lemma DE} we have
$$
\gamma_r(G)=[\gamma_{r-1}(G),x_r].
$$
Define now $C_{x_r}=C_{\gamma_{r-1}(G)}(x_r)$ and notice that $C_{x_r}$ is normal in $G$ since 
$$
[C_{x_r},G,x_r]\le [\gamma_{r-1}(G),G,x_r]\le\gamma_{r+1}(G)=1.
$$
Thus, we consider the quotient group $G/C_{x_r}$.
Since $\gamma_{r+1}(G)=1$ the map
\begin{equation*}
  \begin{split}
         \eta: \gamma_{r-1}(G)&\longrightarrow \gamma_{r}(G)\\
        g\ \ \ \ &\longmapsto [g,x_r]
    \end{split}
\end{equation*}
is a group epimorphism whose kernel is $C_{x_r}$, so 
$$
|\gamma_{r-1}(G/C_{x_r})|=p^2.
$$
Furthermore, since $\gamma_{r+1}(G)=1$, we have $C_{r-1}(G/C_{x_r})=G/C_{x_r}$.
By inductive hypothesis, there exist an integer $j$ with $1\le j\le r-1$ and $x_1,\ldots,x_{j-1},x_{j+1},\ldots,x_{r-1}\in G$ such that
$$
\gamma_{r-1}(G)=\langle[x_1,\ldots,x_{j-1},g,x_{j+1},\ldots,x_{r-1}]\mid g\in G\rangle C_{x_r}.
$$

Finally,
\begin{align*}
    \gamma_r(G)&=[\gamma_{r-1}(G),x_r]\\
    &=[\langle[x_1,\ldots,x_{j-1},g,x_{j+1},\ldots,x_{r-1}]\mid g\in G\rangle C_{x_r},x_r]\\
    &=\langle[x_1,\ldots,x_{j-1},g,x_{j+1},\ldots,x_{r-1},x_r]\mid g\in G\rangle,
\end{align*}
and this concludes the proof.
\end{proof}

\begin{theorem}
\label{theorem C=G}
Let $G$ be a finite $p$-group with $p$ odd and $d(\gamma_r(G))=2$.
If $C_r(G)=G$, then there exist an integer $j$ with $1\le j\le r$ and $x_1,\ldots,x_{j-1},$ $x_{j+1},\ldots,x_r\in G$ such that
$$
\gamma_r(G)=\{[x_1,\ldots,x_{j-1},g,x_{j+1},\ldots,x_r]\mid g\in G\}.
$$
\end{theorem}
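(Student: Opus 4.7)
The plan is to climb the filtration $\gamma_r(G)\ge\gamma_r(G)^p\ge\gamma_r(G)^{p^2}\ge\dots\ge 1$ by means of Lemma \ref{lemma linking}, starting from the bottom. First, I would invoke Lemma \ref{lemma generation C=G} to fix an integer $j$ with $1\le j\le r$ and elements $x_1,\dots,x_{j-1},x_{j+1},\dots,x_r\in G$ such that
$$
\gamma_r(G)=\langle [y_1,\dots,y_{j-1},g,y_{j+1},\dots,y_r]\mid g\in G\rangle
$$
for every $y_i\in x_i^G$. Since $\gamma_r(G)$ is $2$-generator and powerful by Lemma \ref{lemma powerful}, the quotient $\gamma_r(G)/\gamma_r(G)^p$ is elementary abelian of rank $2$, so I can pick $g_1,g_2\in G$ for which the commutators $u_i=[x_1,\dots,x_{j-1},g_i,x_{j+1},\dots,x_r]$ generate $\gamma_r(G)$ modulo $\gamma_r(G)^p$. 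Because $C_r(G)=G$ forces $\gamma_{r+1}(G)\le\gamma_r(G)^p$, an iterated use of Corollary \ref{corollary separation} will show that the same $g_1,g_2$ work uniformly for every $y_i\in x_i^G$.

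Writing $|\gamma_r(G)|=p^m$, I would then prove by downward induction on $k$, from $k=m$ to $k=0$, the containment
$$
\gamma_r(G)^{p^k}\subseteq\{[x_1,\dots,x_{j-1},g,x_{j+1},\dots,x_r]\mid g\in G\}.
$$
The base case $k=m$ is trivial, while the conclusion at $k=0$ is the theorem. For the inductive step I would apply Lemma \ref{lemma linking} with $L=\gamma_r(G)^{p^k}$ and $N=\gamma_r(G)^{p^{k+1}}$, where its hypothesis (ii) is precisely the inductive hypothesis. The substance of the proof thus lies in verifying hypothesis (i): that every coset of $N$ in $L$ is represented by some $[y_1,\dots,y_{j-1},g,y_{j+1},\dots,y_r]$ for every $y_i\in x_i^G$.

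To verify (i), I would combine Lemma \ref{lemma petrescohard} with the power-homomorphism property of powerful $p$-groups (item (v) in the list in Section \ref{section preliminaries}). Iterating Lemma \ref{lemma C}(iii) under $C_r(G)=G$ yields $\gamma_{r+s}(G)\le\gamma_r(G)^{p^s}$ for every $s\ge 1$, so $[\gamma_j(G),H,H,{}_{r-j}G]\le\gamma_{r+2}(G)\le\gamma_r(G)^{p^2}$ for any normal subgroup $H$ of $G$; this is exactly the "central of exponent $p$ modulo $N^p$" hypothesis of Lemma \ref{lemma petrescohard} with $N=\gamma_r(G)^p$. For each pair $(a,b)$ with $0\le a,b<p$ and $(a,b)\not\equiv(0,0)\pmod p$, setting $h=g_1^a g_2^b$ yields $[y_1,\dots,h,\dots,y_r]\equiv u_1^a u_2^b\pmod{\gamma_r(G)^p}$, so the subgroup $L_h=\langle[y_1,\dots,h,\dots,y_r]\rangle\gamma_r(G)^p$ satisfies $|L_h:\gamma_r(G)^p|=p$, and Lemma \ref{lemma petrescohard} gives
$$
[y_1,\dots,(g_1^a g_2^b)^{p^k},\dots,y_r]\equiv[y_1,\dots,g_1^a g_2^b,\dots,y_r]^{p^k}\pmod{\gamma_r(G)^{p^{k+1}}}.
$$
The power-homomorphism property then identifies the class of $[y_1,\dots,g_1^a g_2^b,\dots,y_r]^{p^k}$ in $\gamma_r(G)^{p^k}/\gamma_r(G)^{p^{k+1}}$ with $u_1^{ap^k}u_2^{bp^k}$. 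Since, by the same property, these classes together with the trivial class (covered by $g=1$) exhaust all cosets of $N$ in $L$, hypothesis (i) follows and the induction closes.

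The main obstacle is the granularity gap: Corollary \ref{corollary separation} produces congruences only modulo $\gamma_{r+1}(G)$, whereas hypothesis (i) of Lemma \ref{lemma linking} demands congruences modulo $\gamma_r(G)^{p^{k+1}}$ for potentially large $k$. The $2$-generator setting compounds this, because beyond the "pure" cosets $u_1^{ap^k}$ and $u_2^{bp^k}$ the "mixed" cosets $u_1^{ap^k}u_2^{bp^k}$ must also be realised as single commutators of the prescribed shape, not merely as products. I resolve this by applying Lemma \ref{lemma petrescohard} to every $h=g_1^a g_2^b$ rather than only to $h=g_1$ and $h=g_2$, and then transferring the resulting equalities through the power-homomorphism of the powerful group $\gamma_r(G)$.
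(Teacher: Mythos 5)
Your proposal is correct and rests on the same pillars as the paper's proof: Lemma \ref{lemma generation C=G} to fix $j$ and the $x_i$, Lemma \ref{lemma petrescohard} together with the power-homomorphism property of the powerful group $\gamma_r(G)$ to realise cosets as single commutator values, and an iterated application of Lemma \ref{lemma linking} to climb back up to $\gamma_r(G)$. The one genuine difference is organisational. The paper introduces the auxiliary subgroup $U=\langle[y_1,\ldots,y_{j-1},g_2,y_{j+1},\ldots,y_r]\rangle\gamma_r(G)^p$ (normal precisely because $C_r(G)=G$) and climbs the refined series $\cdots\le\gamma_r(G)^{p^k}\le U^{p^{k-1}}\le\gamma_r(G)^{p^{k-1}}\le\cdots$, so that every factor is cyclic and each layer is covered by powers of a single element $g_1$ or $g_2$; the mixed cosets are then handled implicitly by the two-step climb. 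You keep the coarse filtration $\gamma_r(G)^{p^k}$, whose factors are elementary abelian of rank at most $2$, and cover all $p^2$ cosets in one pass by applying Lemma \ref{lemma petrescohard} to each cyclic subgroup $L_h=\langle[y_1,\ldots,h,\ldots,y_r]\rangle\gamma_r(G)^p$ with $h=g_1^ag_2^b$ --- these are all normal, again because $C_r(G)=G$, so the lemma applies with $N=\gamma_r(G)^p$, and its hypothesis (i) holds since $[\gamma_j(G),H,H,{}_{r-j}\,G]\le\gamma_{r+2}(G)\le\gamma_r(G)^{p^2}=N^p$. Both routes ultimately need the same two inputs from Lemma \ref{lemma C}, namely $\gamma_{r+1}(G)\le\gamma_r(G)^p$ and the fact that $\gamma_r(G)^p$ is central of exponent $p$ modulo $\gamma_r(G)^{p^2}$. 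Yours dispenses with $U$ and with the interleaved series at the cost of $p^2-1$ applications of Lemma \ref{lemma petrescohard} per layer instead of two, and your explicit use of the epimorphism $\gamma_r(G)/\gamma_r(G)^p\to\gamma_r(G)^{p^k}/\gamma_r(G)^{p^{k+1}}$ to identify $[y_1,\ldots,h^{p^k},\ldots,y_r]$ with $u_1^{ap^k}u_2^{bp^k}$ correctly closes the granularity gap you point out.
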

\begin{proof}
By Lemma \ref{lemma generation C=G}, there exist exist an integer
$j$ with $1\le j\le r$ and $x_1,\ldots,x_{j-1},x_{j+1},\ldots,x_r\in G$ such that
$$
\gamma_r(G)=\langle[y_1,\ldots,y_{j-1},g,y_{j+1},\ldots,y_r]\mid g\in G\rangle
$$
for every $y_i\in x_i^G$.
Choose arbitrarily $y_i\in x_i^G$ for all $i$.
We have
\begin{equation*}
    \begin{split}
    \gamma_r(G)=&\langle[y_1,\ldots,y_{j-1},g_1,y_{j+1},\ldots,y_r],\\
    &\ \ \ [y_1,\ldots,y_{j-1},g_2,y_{j+1},\ldots,y_r]\rangle
\end{split}
\end{equation*}
for some $g_1,g_2\in G$.
Let
$$
U=\langle[y_1,\ldots,y_{j-1},g_2,y_{j+1},\ldots,y_r]\rangle\gamma_r(G)^p,
$$
and notice that it is normal in $G$ since $C_r(G)=G$.
Observe that $\gamma_{r+1}(G)\le\gamma_r(G)^p$, and $\gamma_r(G)^p$ is central of exponent $p$ modulo $\gamma_{r}(G)^{p^2}$ by 
(iii) of Lemma \ref{lemma C}.
Therefore, we apply Lemma \ref{lemma petrescohard} to both quotients
$$
\gamma_r(G)/U\ \ \text{ and }\ \ U/\gamma_r(G)^p
$$
and we get 
$$
\gamma_r(G)^{p^k}=\langle[y_1,\ldots,y_{j-1},g_1^{p^k},y_{j+1},\ldots,y_r]\rangle U^{p^k}
$$
and
$$
U^{p^k}=\langle[y_1,\ldots,y_{j-1},g_2^{p^k},y_{j+1},\ldots,y_r]\rangle \gamma_r(G)^{p^{k+1}}
$$
for every $k\ge 0$.
Furthermore, as $\gamma_{r+1}(G)\le \gamma_r(G)^p$, it follows from Corollary \ref{corollary separation} that 
$$
[y_1,\ldots,y_{j-1},g_1,y_{j+1},\ldots,y_r]^s\equiv [y_1,\ldots,y_{j-1},g_1^s,y_{j+1},\ldots,y_r]\pmod{U}
$$
and
$$
[y_1,\ldots,y_{j-1},g_2,y_{j+1},\ldots,y_r]^s\equiv [y_1,\ldots,y_{j-1},g_2^s,y_{j+1},\ldots,y_r]\hspace{-6pt}\pmod{\gamma_r(G)^p}
$$
for each integer $s$.
Thus, using Lemma \ref{lemma petrescohard} and the aforementioned property (v) of powerful $p$-groups it can be easily proved that
\begin{align*}
    [y_1,\ldots,y_{j-1},g_1^{p^k}&,y_{j+1},\ldots,y_r]^s\\
    &\equiv [y_1,\ldots,y_{j-1},g_1,y_{j+1},\ldots,y_r]^{sp^k}\\
    &\equiv ([y_1,\ldots,y_{j-1},g_1^s,y_{j+1},\ldots,y_r]u)^{p^k}\\
    &\equiv [y_1,\ldots,y_{j-1},g_1^{sp^k},y_{j+1},\ldots,y_r]\pmod{U^{p^k}},
\end{align*}
where $u\in U$, and similarly 
\begin{align*}
    [y_1,\ldots,y_{j-1},g_2^{p^k}&,y_{j+1},\ldots,y_r]^s\\
    &\equiv [y_1,\ldots,y_{j-1},g_2^{sp^k},y_{j+1},\ldots,y_r]\pmod{\gamma_r(G)^{p^{k+1}}}.
\end{align*}
Hence, for each $k\ge 0$ we have
$$
\gamma_r(G)^{p^k}\subseteq \bigcup_{g\in G}\gamma_r(y_1,\ldots,y_{j-1},g,y_{j+1},\ldots,y_r)U^{p^k}
$$
for every $y_i\in x_i^G,
$ and similarly $$U^{p^k}\subseteq\bigcup_{g\in G}\gamma_r(y_1,\ldots,y_{j-1},g,y_{j+1},\ldots,y_r)\gamma_r(G)^{p^{k+1}}$$ for every $y_i\in x_i^G$.

The result now follows by repeatedly applying Lemma \ref{lemma linking} to the subgroups of the series
$$1=\gamma_r(G)^{p^s} \le U^{p^{s-1}}\le \gamma_r(G)^{p^{s-1}}\le \dots\le \gamma_r(G)^{p^{i}}\le U^{p^{i-1}}\le\dots\le
\gamma_r(G),$$
wkere $p^s$ is the exponent of $\gamma_r(G)$.
\end{proof}

\section{Proof of Theorem A when $C_r(G)\neq G$}
\label{section non-cyclic CneqG}

To end the proof of Theorem A, we need a further technical definition.

\begin{definition}
Let $G$ be a finite $p$-group and let $r\ge 2$.
We define $C_r^r(G)=\gamma_r(G)^p$ and
$$
C_i^r(G)=C_{\gamma_i(G)}(G/C^r_{i+1}(G))
$$
for all $2\le i \le r-1$.
\end{definition}

As done in Section \ref{section non-cyclic C=G}, we start finding suitable generators for $\gamma_r(G)$.



\begin{lemma}
\label{lemma generation C neq G}
Let $G$ be a finite $p$-group with $d(\gamma_r(G))=2$ for some $r\ge 2$ and $C_r(G)\neq G$.
Let $U=\gamma_{r+1}(G)\gamma_r(G)^p$.
Then, there exist an integer $j$ with $2\le j\le r$, $x_1,\ldots,x_{j-1}\in G$ and $c\in C_r(G)$  such that
$$
\gamma_r(G)=\langle[y_1,\ldots,y_{j-1},c,g_{j+1},\ldots,g_r]\rangle U
$$
for every $y_k\in x_k^G$ with $k=1,\ldots,j-1$ and every $g_{j+1},\ldots,g_{r}\in G\setminus C_r(G)$.
Moreover, $[\gamma_{i}(G),C_r(G)]\le C^r_i(G)$ for every $j\le i\le r$.
\end{lemma}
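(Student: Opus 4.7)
The plan is to proceed by induction on $r$, extending the strategy of Lemma \ref{lemma generation C=G} to the present setting in which $U$ lies strictly between $\gamma_r(G)^p$ and $\gamma_r(G)$ as the unique normal subgroup of $G$ with this property (Lemma \ref{lemma C}(ii)). The main challenge will be to coordinate the choice of $j$ with the chain of containments $[\gamma_i(G),C_r(G)]\le C_i^r(G)$ while simultaneously exhibiting the desired $\gamma_r$-generator modulo $U$.

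For the base case $r=2$, the integer $j=2$ is forced, and one needs $x_1\in G$ and $c\in C_2(G)$ with $[x_1,c]\notin U$. Since $|\gamma_2(G):U|=p$ there exists a commutator $[g,h]\notin U$; if neither entry already belongs to $C_2(G)$, one exploits the fact that $G/C_2(G)$ is cyclic of order $p$ to write $h=g^k c$ with $c\in C_2(G)$, so that $[g,h]=[g,c]$. The passage to conjugates $y_1\in x_1^G$ is handled by Corollary \ref{corollary separation}, and the additional containment $[\gamma_2(G),C_2(G)]\le\gamma_2(G)^p$ is the defining property of $C_2(G)$.

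For the inductive step with $r\ge 3$, the argument splits according to whether $[\gamma_{r-1}(G),C_r(G)]\le U$ or not. In the easier branch, where $[\gamma_{r-1}(G),C_r(G)]\not\le U$, one takes $j=r$ and finds $c\in C_r(G)$ together with a $\gamma_{r-1}$-value $[x_1,\ldots,x_{r-1}]$ such that $[x_1,\ldots,x_{r-1},c]\notin U$, again concluding via Corollary \ref{corollary separation}. In the harder branch, where $[\gamma_{r-1}(G),C_r(G)]\le U$, one selects $x_r\in G\setminus C_r(G)$ for which $[\gamma_{r-1}(G),x_r]\not\le U$, isolates a single $\gamma_{r-1}$-value $[x_1,\ldots,x_{r-1}]$ with $[x_1,\ldots,x_r]\notin U$, and then applies the inductive hypothesis inside a suitable subquotient (modelled on the quotient $G/C_{x_r}$ used in the proof of Lemma \ref{lemma generation C=G}) to produce the required $c\in C_r(G)$ at an earlier position $j<r$. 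The companion containment $[\gamma_i(G),C_r(G)]\le C_i^r(G)$ is then verified by downward induction on $i$ starting from $i=r$, which is Lemma \ref{lemma C}(iii); the three subgroups lemma propagates it as long as one stays above the chosen $j$.

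The principal obstacle will be the delicate interplay between the placement of $c$ and the chain of containments in the harder branch. Whenever one rewrites a non-$C_r(G)$ entry in the form $x_r^k c$ with $c\in C_r(G)$, the correction terms generated by the identities of Lemma \ref{comm id} together with Corollary \ref{corollary separation} must be shown to lie in $U$, and this control is precisely what the containments $[\gamma_i(G),C_r(G)]\le C_i^r(G)$ provide. Consequently $j$ must be chosen as the largest index compatible with both the existence of the desired generator and the chain of containments, forcing a careful bookkeeping through the descending series $C_r^r(G)\supseteq C_{r-1}^r(G)\supseteq\cdots\supseteq C_j^r(G)$.
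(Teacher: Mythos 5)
Your base case and your dichotomy for the inductive step (according to whether $[\gamma_{r-1}(G),C_r(G)]\le U$ or not) agree with the paper, and the branch where $[\gamma_{r-1}(G),C_r(G)]\not\le U$, giving $j=r$, is handled correctly. The harder branch, however, contains a step that fails: the three subgroups lemma does not propagate the containments $[\gamma_i(G),C_r(G)]\le C_i^r(G)$ downward. Indeed, since $x\in C_i^r(G)$ means $[x,G]\le C_{i+1}^r(G)$, your propagation needs $[\gamma_i(G),C_r(G),G]\le C_{i+1}^r(G)$, and the three subgroups lemma only yields $[\gamma_i(G),C_r(G),G]\le [C_r(G),G,\gamma_i(G)]\,[\gamma_{i+1}(G),C_r(G)]$. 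The second factor is controlled by your downward induction, but the first is only bounded by $\gamma_{i+2}(G)$, and unravelling the definition of $C_{i+1}^r(G)$ shows that $\gamma_{i+2}(G)\le C_{i+1}^r(G)$ is equivalent to $\gamma_{r+1}(G)\le\gamma_r(G)^p$ --- which is precisely what fails when $C_r(G)\neq G$ by Lemma \ref{lemma C}(ii). So the propagation breaks already at its first step. The paper obtains these containments from the inductive construction instead: the inductive hypothesis in each quotient $G/C_g$ (with $C_g=C_{\gamma_{r-1}(G)}(g)$, $g\in G\setminus C_r(G)$) gives $[\gamma_i(G),C_r(G)]\le C_{i,g}$ where $C_{i,g}/C_g=C_i^{r-1}(G/C_g)$, and one then proves $\bigcap_g C_{i,g}\le C_i^r(G)$ by induction on $r-i$, the base case using that $G=\langle G\setminus C_r(G)\rangle$ forces $\bigcap_g C_g=C_{\gamma_{r-1}(G)}(G)$.

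A second, equally serious omission is that your harder branch does not deliver the quantifier ``for every $g_{j+1},\ldots,g_r\in G\setminus C_r(G)$'', which is the substance of the statement and is what makes the induction close. You select a single $x_r$ with $[\gamma_{r-1}(G),x_r]\not\le U$ and a single $\gamma_{r-1}$-value; the paper must instead show that $\gamma_r(G)=[\gamma_{r-1}(G),g]$ for \emph{every} $g\in G\setminus C_r(G)$ (via $E_r(V)\subseteq C_r(G)$ for all maximal $V$ and Lemma \ref{lemma DE}), that $C_{r-1}(G/C_g)=C_r(G)/C_g\neq G/C_g$ (so the inductive hypothesis applies and the element $c$ it produces genuinely lies in $C_r(G)$), and, crucially, that $C_g\gamma_r(G)$ is one and the same subgroup $U^*$ for all $g\in G\setminus C_r(G)$, so that a generator of $\gamma_{r-1}(G)$ modulo $C_g\gamma_r(G)$ found for one $g$ works uniformly for all of them; finally $j$ is taken to be $\max_g j_g$. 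None of these uniformity steps appears in your outline, and without them your argument only yields the generation statement for particular choices of the trailing entries.
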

\begin{proof}  
We proceed by induction on $r$.
Suppose first $r=2$ and take $x\in G\setminus C_2(G)$ arbitrary.
Since $C_2(G)$ is maximal in $G$ by Lemma \ref{lemma C} (i), we have $G=\langle x\rangle C_2(G)$.
Also, as $D_2(U)\le C_2(G)$ by Lemma \ref{lemma C2}, we have $x\not\in D_2(U)$.
Moreover, by Lemma \ref{lemma C} (ii), $U$ is the unique subgroup such that $U\maxG \gamma_r(G)$, so by Lemma \ref{lemma DE} we have $G=[x,G']$.
Thus we get
$$
G'=[x,G]=[x,\langle x\rangle C_2(G)]=[x,C_2(G)].
$$
In addition, $[G',C_2(G)]\le(G')^p=C^2_2(G)$, as desired.

Take then $r\ge 3$ and write $C=C_r(G)$ for simplicity.
We may assume $\gamma_r(G)^p=C^r_r(G)=1$.
Suppose first there exists $x_1,\ldots,x_{r-1}\in G$ such that $\gamma_r(G)=[x_1,\ldots,x_{r-1},C]$.
Since $[\gamma_r(G),C]=1$ and since $x_i^{g}=x_i[x_i,g]$ for evrey $g\in G$, it follows from Corollary \ref{corollary separation} that
$$
\gamma_r(G)=[y_1,\ldots,y_{r-1},C]
$$
for all $y_i\in x_i^G$.
Hence, we may assume there is no such an element.
In other words, if $x\in G_{\gamma_{r-1}}$, then $[x,C]\neq \gamma_{r}(G)$.
Note, however, that $[x,C]$ is normal in $G$ since, as above, $[x,C]^g=[x^g,C]=[x,C]$.
Since $U$ is the only non-trivial normal subgroup of $G$ properly contained in $\gamma_r(G)$, we get $[x,C]\le U$ for every $\gamma_{r-1}$-value $x$.
Since $\gamma_{r-1}(G)$ is generated by all $\gamma_{r-1}$-values, we have, then, $[\gamma_{r-1}(G),C]\le U$.
This, in particular, implies that $C\le E_r(U)$, and since $E_r(U)\neq G$ by Lemma \ref{lemma DE2}, we have $C=E_r(U)$.
Note that we have $V\max_{\gamma_{r-1}(G)}\gamma_r(G)$ for every $V\max\gamma_r(G)$ since
$$
[\gamma_r(G),\gamma_{r-1}(G)]\le[\gamma_r(G),G']\le[\gamma_r(G),G,G]=1.
$$
On the other hand,  $U=\gamma_{r+1}(G)$, so for every 
$V\max\gamma_r(G)$ with $V\neq U$ we have $[\gamma_r(G),E_r(V)]\le U\cap V=1$, and then, $E_r(V)\le C$.
Therefore,
$$
\bigcup \{E_r(V)\mid V\max\gamma_r(G)\}\subseteq C
$$
and then, by Lemma \ref{lemma DE}, we get
$$
\gamma_r(G)=[\gamma_{r-1}(G),g]
$$
for every $g\in G\setminus C$.

As $[\gamma_r(G),\gamma_{r-1}(G)]=1$, the map
\begin{equation*}
    \begin{split}
        \eta_g:\gamma_{r-1}(G)&\longrightarrow \gamma_{r}(G)\\
        x\ \ \ \ &\longmapsto [x,g]
    \end{split}
\end{equation*}
is a group epimorphism for every $g\in G\setminus C$ whose kernel is $C_{\gamma_{r-1}(G)}(g)$.
Choose an arbitrary $g\in G\setminus C$, write $C_g=C_{\gamma_{r-1}(G)}(g)$ for simplicity and note that 
$$
[C_g,G]=[C_g,\langle g\rangle C]=[C_g,C]\le [\gamma_{r-1}(G),C]\le U \le C_g,
$$
where the last equality holds since $U\le Z(G)$.
Thus, the subgroups $C_g$ are all normal in $G$, and we can consider the groups $G/C_g$.
Now, $\gamma_{r-1}(G/C_g)=\gamma_{r-1}(G)/C_g$ is isomorphic to $\gamma_{r}(G)$, so it has order $p^2$ and exponent $p$.
In addition $\gamma_r(G)\not\le C_g$ since otherwise $[\gamma_r(G),g]=1$, which contradicts the fact that $g\not\in C$.
Thus,
$$
G/C_g\neq C_{r-1}(G/C_g).
$$
Moreover, since $[\gamma_{r-1}(G),C]\le U\le C_g$, it follows that
$$
C_{r-1}(G/C_g)=C/C_g
$$
for all $g\in G\setminus C$.
By Lemma \ref{lemma C} (ii), there is only one normal subgroup $R$ of $G$  with $C_g< R<\gamma_{r-1}(G)$, so $R=C_g\gamma_r(G)$.

We apply now the inductive hypothesis to all groups $G/C_g$.
It follows that for each $g\in G\setminus C$, there exist $j_g\ge 1$,
$x_{1,g},\ldots,x_{j_g-1,g}\in G$ and $c_g\in C$ such that
$$
\gamma_{r-1}(G)=\langle[y_{1,g},\ldots,y_{j_g-1,g},c_g,g_{j_g+1},\ldots,g_{r-1}]\rangle C_g\gamma_r(G)
$$
for every $y_{i,g}\in x_{i,g}^G$, $i=1,\ldots,j_g-1$ and every $g_{j_g+1},\ldots,g_{r-1}\in G\setminus C$.
Moreover, if we define
$$C_{i,g}/C_g=C^{r-1}_i(G/C_g),$$
then we have $[\gamma_{i}(G),C]\le C_{i,g}$ for all $j_g\le i\le r-1$.

Define now
$$
U^*=\gamma_r(G)\prod_{g\in G\setminus C}C_g,
$$
which is, of course, normal in $G$.

We claim that $U^*=C_g\gamma_r(G)$ for all $g\in G\setminus C$.
For that purpose, fix $g\in G\setminus C$ and take $h\in G\setminus C$ arbitrary.
Then $C_gC_h$ is normal in $G$, so either $C_gC_h=\gamma_{r-1}(G)$ or $C_h\le C_g\gamma_r(G)$.
In the first case we would have
$$
\gamma_r(G)=[\gamma_{r-1}(G),h]=[C_hC_g,h]=[C_g,h]\le C_g,
$$
which is a contradiction since $[\gamma_r(G),g]\ne 1$.
Hence, $C_h\le C_g\gamma_r(G)$, and so $C_g\gamma_r(G)=C_hC_g\gamma_r(G)$.
Since this holds for all $h\in G\setminus C$, it follows that $C_g\gamma_r(G)=U^*$, and the claim is proved.

Take now $j=\max\{j_g\mid g\in G\setminus C\}$.
Then, there exist
$x_{1},\ldots,x_{j-1}\in G$ and $c\in C$ such that
$$
\gamma_{r-1}(G)=\langle[y_{1},\ldots,y_{j-1},c,g_{j+1},\ldots,g_{r-1}]\rangle U^*
$$
for every $y_i\in x_i^G$, $i=1,\ldots,j-1$ and every $g_{j+1},\ldots,g_{r-1}\in G\setminus C$.
Moreover, because of the choice of $j$, we have
$$
[\gamma_{i}(G),C]\le \bigcap_{g\in G\setminus C}C_{i,g}
$$
for all $j\le i\le r-1$.
Let us prove that $$\bigcap_{g\in G\setminus C}C_{i,g}\le C^r_{i}(G)\text{ for every }i\text{ such that }j\le i\le r-1.$$

We proceed by induction on $r-i$.
If $r-i=1$, that is, if $i=r-1$, then $C_{r-1,g}=C_g=C_{\gamma_{r-1}}(g)$, and since $G=\langle G\setminus C\rangle$, it follows that
$$
\bigcap_{g\in G\setminus C}C_g=C_{\gamma_{r-1}(G)}(G)=C^r_{r-1}(G).
$$
Assume now $i\le r-2$.
Then,
$$
\Big[\hspace{-2pt}\bigcap_{g\in G\setminus C}\hspace{-4pt}C_{i,g}\,,G\Big]\le \bigcap_{g\in G\setminus C}C_{i+1,g}\le C^r_{i+1}(G),
$$
 by the inductive hypothesis, and so,
$$
\bigcap_{g\in G\setminus C}C_{i,g}\le C^r_{i}(G)
$$
as claimed.

Since $[\gamma_r(G),C]=1=C^r_r(G)$, we have $[\gamma_i(G),C]\le C^r_i(G)$ for every $i$ such that $j\le i\le r$.

Finally, take $g_r\in G\setminus C$ arbitrary.
Observe that 
$$
[U^*,g_r]=[C_{g_r}\gamma_r(G),g_r]=[\gamma_r(G),g_r]=U,
$$
where the last equality holds since $1\neq[\gamma_r(G),g_r]\le \gamma_{r+1}(G)$.
Hence,
\begin{align*}
    \gamma_{r}(G)&=[\gamma_{r-1}(G),g_r]\\
    &=[\langle[y_{1},\ldots,y_{j-1},c,g_{j+1},\ldots,g_{r-1}]\rangle U^*,g_r]\\
    &=[\langle[y_{1},\ldots,y_{j-1},c,g_{j+1},\ldots,g_{r-1}]\rangle,g_r]U\\
    &=\langle[y_{1},\ldots,y_{j-1},c,g_{j+1},\ldots,g_r]\rangle U,
\end{align*}
and the proof is complete.
\end{proof}

\begin{theorem}
Let $G$ be a finite $p$-group with $p$ odd and $d(\gamma_r(G))=2$ for some $r\ge 2$.
If $C_r(G)\neq G$, then there exist an integer $j$ with $1\le j\le r$ and $x_1,\ldots,x_{j-1},x_{j+1},\ldots,x_r$ 
such that
$$
\gamma_r(G)=\{[x_1,\ldots,x_{j-1},c,x_{j+1},\ldots,x_r]\mid c\in C_r(G)\}.
$$
\end{theorem}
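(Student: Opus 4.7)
The plan is to mirror the proof of Theorem \ref{theorem C=G}, using Lemma \ref{lemma generation C neq G} in place of Lemma \ref{lemma generation C=G} and keeping the varying commutator slot inside $C_r(G)$. First, I would apply Lemma \ref{lemma generation C neq G} to fix $j$ with $2\le j\le r$, elements $x_1,\ldots,x_{j-1}\in G$, $c_1\in C_r(G)$, and $x_{j+1},\ldots,x_r\in G\setminus C_r(G)$ such that, writing $U=\gamma_{r+1}(G)\gamma_r(G)^p$ and $a_1=[x_1,\ldots,x_{j-1},c_1,x_{j+1},\ldots,x_r]$, we have $\gamma_r(G)=\langle a_1\rangle U$. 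By Lemma \ref{lemma C}(ii), $U$ is $G$-normal of index $p$ in $\gamma_r(G)$ and is the unique normal subgroup of $G$ strictly between $\gamma_r(G)^p$ and $\gamma_r(G)$. I would then produce a second element $c_2\in C_r(G)$ with $a_2:=[x_1,\ldots,x_{j-1},c_2,x_{j+1},\ldots,x_r]\in U\setminus\gamma_r(G)^p$, so that $\gamma_r(G)=\langle a_1,a_2\rangle$ (using powerfulness of $\gamma_r(G)$ from Lemma \ref{lemma powerful}). Since $[\gamma_r(G),C_r(G)]\le\gamma_r(G)^p$ by Lemma \ref{lemma C}(iii), one has $U/\gamma_r(G)^p=[\gamma_r(G),x_r]\gamma_r(G)^p/\gamma_r(G)^p$; combined with the commutator identities of Lemma \ref{comm id} and the normality $C_r(G)\trianglelefteq G$, one can take $c_2$ to be a suitable iterated commutator of $c_1$ with $x_r$ (still inside $C_r(G)$) to realize $a_2$ in the desired coset.

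With $a_1,a_2$ in hand, the argument closely follows Theorem \ref{theorem C=G}. Noting $U=\langle a_2\rangle\gamma_r(G)^p$, I would apply Lemma \ref{lemma petrescohard}(1) to both quotients $\gamma_r(G)/U$ (with $h=c_1$ and $N=U$) and $U/\gamma_r(G)^p$ (with $h=c_2$ and $N=\gamma_r(G)^p$). The hypothesis---that $[\gamma_j(G),H_i,H_i,{}_{r-j}G]$ is central of exponent $p$ modulo $N^p$, where $H_i=\langle c_i\rangle^G$---is verified by observing that $H_i\le C_r(G)$ and cascading down the chain $[\gamma_i(G),C_r(G)]\le C_i^r(G)$ from the \textit{moreover} clause of Lemma \ref{lemma generation C neq G}, via the recursive definition $C_i^r(G)=C_{\gamma_i(G)}(G/C_{i+1}^r(G))$, until reaching $C_r^r(G)=\gamma_r(G)^p$. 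Lemma \ref{lemma petrescohard} then yields, for every $k\ge 0$,
\[
\gamma_r(G)^{p^k}=\langle[x_1,\ldots,c_1^{p^k},\ldots,x_r]\rangle U^{p^k}\quad\text{and}\quad U^{p^k}=\langle[x_1,\ldots,c_2^{p^k},\ldots,x_r]\rangle\gamma_r(G)^{p^{k+1}},
\]
and the powers $c_1^{sp^k},c_2^{sp^k}$ remain in $C_r(G)$ for every integer $s$.

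Finally, I would apply Lemma \ref{lemma linking} iteratively from the bottom of the alternating descending series
\[
1=\gamma_r(G)^{p^s}\le U^{p^{s-1}}\le\gamma_r(G)^{p^{s-1}}\le\cdots\le U\le\gamma_r(G),
\]
where $p^s$ is the exponent of $\gamma_r(G)$; Corollary \ref{corollary separation} (together with Petresco) converts powers of $a_1,a_2$ into commutators $[x_1,\ldots,c_i^m,\ldots,x_r]$ modulo the next subgroup, and Lemma \ref{lemma linking} then promotes modular coverage to set-theoretic coverage (the argument of Lemma \ref{lemma linking} goes through verbatim with $c\in C_r(G)$ in place of $g\in G$, since $C_r(G)\trianglelefteq G$ is closed under the conjugations appearing in Lemma \ref{lemma separation}). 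This concludes $\gamma_r(G)=\{[x_1,\ldots,x_{j-1},c,x_{j+1},\ldots,x_r]\mid c\in C_r(G)\}$. The main obstacle is the construction of $c_2$ in the first paragraph: while Lemma \ref{lemma generation C neq G} directly supplies a single $c_1$ whose commutator generates $\gamma_r(G)$ modulo $U$, obtaining an independent $c_2\in C_r(G)$ whose commutator lies in $U\setminus\gamma_r(G)^p$ demands careful commutator manipulation to translate the $\gamma_{r+1}$-type generators of $U/\gamma_r(G)^p$ into $\gamma_r$-commutators of the prescribed form while keeping the varying slot inside $C_r(G)$.
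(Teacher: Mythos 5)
Your overall architecture --- the two-step filtration $\gamma_r(G)\ge U\ge\gamma_r(G)^p$ with $U=\gamma_{r+1}(G)\gamma_r(G)^p$, Lemma \ref{lemma petrescohard} applied to both quotients, and the climb up the alternating series via Lemma \ref{lemma linking} --- matches the paper's proof of this theorem. But the step you yourself flag as ``the main obstacle'', namely producing a second element of $C_r(G)$ whose commutator in the $j$-th slot generates $U$ modulo $\gamma_r(G)^p$, is precisely the substantive content of the paper's argument, and you have not supplied it. Your guess that $c_2$ should be ``a suitable iterated commutator of $c_1$ with $x_r$'' does not point at the right mechanism. What the paper does is apply the Hall--Witt identity to $x=[y_1,\ldots,y_{j-1}]$, $c$ and $g_{j+1}$ to obtain
$$
[x,c,g_{j+1},\ldots,g_r]\equiv[x,[c,g_{j+1}],g_{j+2},\ldots,g_r]\pmod{U},
$$
that is, it absorbs the \emph{first tail entry} $g_{j+1}$ into the $j$-th slot, shifting the frame; appending one further $g_{r+1}\in G\setminus C_r(G)$ then yields $U=\langle[x,[c,g_{j+1}],g_{j+2},\ldots,g_{r+1}]\rangle\gamma_r(G)^p$, an $r$-fold commutator of the required shape whose $j$-th entry $[c,g_{j+1}]$ lies in $C_r(G)$. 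Making the Hall--Witt error terms land in $U$ is exactly where the \emph{moreover} clause of Lemma \ref{lemma generation C neq G} is needed: one must push $[g_{j+1},x,c]\in[\gamma_j(G),C_r(G)]\le C_j^r(G)\cap\gamma_{j+1}(G)$ down the cascade $[C_i^r(G),G]\le C_{i+1}^r(G)$ to conclude that its further commutators with $g_{j+2},\ldots,g_r$ fall into $U$. You invoke that clause only to verify the hypothesis of Lemma \ref{lemma petrescohard}, not here, so this computation is absent from your proposal; without it the theorem is not proved.

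Two smaller points. The frame of the $U$-generator produced this way is $(g_{j+2},\ldots,g_{r+1})$ rather than $(g_{j+1},\ldots,g_r)$; since Lemma \ref{lemma generation C neq G} gives generation for \emph{every} choice of tail entries in $G\setminus C_r(G)$, the two frames can be made to coincide, but this must be observed before Lemma \ref{lemma linking} can be applied with a single tuple $x_{j+1},\ldots,x_r$. Also, for the quotient $U/\gamma_r(G)^p$ the paper checks the hypothesis of Lemma \ref{lemma petrescohard} via $[c,g_{j+1}]\in G'$ and $[\gamma_{j-1}(G),G',G',{}_{r-j}\,G]\le\gamma_{r+3}(G)\le U^p$, which is simpler than cascading through the $C_i^r(G)$ a second time. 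The remainder of your outline (powerfulness, the congruences converting powers into commutators, and the stability of $C_r(G)$ under the conjugations appearing in Lemma \ref{lemma separation}) is consistent with the paper.
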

\begin{proof}
Let $U=\gamma_{r+1}(G)\gamma_r(G)^p$ and write $C=C_r(G)$ 
for simplicity.
By Lemma \ref{lemma generation C neq G}, there exist an integer $j$ with $1\le j\le r$ and $x_1,\ldots,x_{j-1}\in G$,
 $c\in C$ such that
$$
\gamma_r(G)=\langle[y_1,\ldots,y_{j-1},c,g_{j+1},\ldots,g_r]\rangle U
$$
for every $y_i\in x_i^G$, $i=1,\ldots,j-1$ and every $g_{j+1},\ldots,g_{r}\in G\setminus C$.
Moreover, $[\gamma_{i}(G),C]\le C^r_i(G)$ for every $j\le i\le r$.

Write $x=[y_1,\ldots,y_{j-1}]$.
It follows from the Hall-Witt Identity and standard commutator calculus that
$$
[x,c,g_{j+1}]=[c,g_{j+1},x]^{-1}[g_{j+1},x,c]^{-1}z
$$
for some $z\in\gamma_{j+2}(G)$.
On the one hand, we have
$$
[z,g_{j+2},\ldots,g_r]\in \gamma_{r+1}(G)\le U.
$$
On the other hand,
$$
[g_{j+1},x,c]\in[\gamma_j(G),C]\le C^r_j(G)\cap\gamma_{j+1}(G),
$$
and since $[C^r_i(G),G]\le C^r_{i+1}(G)$ for every $i\le r-1$, we have
\begin{align*}
   [C^r_j(G)\cap\gamma_{j+1}(G),g_{j+2},\ldots,g_{r}]\le C^r_{r-1}(G)\cap\gamma_{r}(G)\le U,
\end{align*}
where the last inequality holds since $C^r_{r-1}(G)\cap\gamma_r(G)$ is normal in $G$ but $\gamma_r(G)\not\le C^r_{r-1}(G)$.
Thus,
$$
[x,c,g_{j+1},\ldots,g_r]\equiv[x,[c,g_{j+1}],g_{j+2},\ldots,g_r]\pmod{U},
$$
so in particular
$$
\gamma_r(G)=\langle [x,[c,g_{j+1}],g_{j+2},\ldots,g_r]\rangle U.
$$
Take now $g_{r+1}\in G\setminus C$ arbitrary.
Since, clearly, we have $[U,g_{r+1}]\le\gamma_r(G)^p$, it follows that
$$
U=\langle [x,[c,g_{j+1}],g_{j+2},\ldots,g_{r+1}]\rangle\gamma_r(G)^p.
$$

Now, observe that on the one hand we have
\begin{align*}
    [\gamma_{j-1}(G),C,C,_{r-j}\,G]&\le[\gamma_j(G),C,_{r-j}\,G]\\
    &\le [C^r_{j}(G),_{r-j}\,G]\\
    &\le C^r_r(G)=\gamma_r(G)^p,
\end{align*}
which is central of exponent $p$ modulo $U^{p}$, and on the other hand we have
$$
[\gamma_{j-1}(G),G',G',_{r-j}\,G]\le\gamma_{r+3}(G)\le U^p,
$$
which is central of exponent $p$ modulo $\gamma_r(G)^{p^2}$.
Therefore, we can apply Lemma \ref{lemma petrescohard} to both quotients
$$
\gamma_r(G)/U\ \ \text{ and }\ \ U/\gamma_r(G)^p
$$
and we conclude in the same way as in the proof of Theorem \ref{theorem C=G}.
\end{proof}

\section{Proof of Theorem B}
\label{section profinite}

Now we prove Theorem B using a similar idea as in Theorem B of \cite{DF} and Theorem A$'$ and Theorem B$'$ of \cite{D}.

\begin{proof}[Proof of Theorem B]
We first claim that there exists $1\le j\le r$ such that for every $N\trianglelefteq_{\mathrm{o}} G$ there exist $g_{N,1},\ldots,g_{N,j-1},g_{N,j+1},\ldots,g_{N,r}\in G$ such that
$$
\gamma_r(G)N/N=\{[g_{N,1},\ldots,g_{N,j-1},g,g_{N,j+1},\ldots,g_{N,r}]\mid g\in G\}.
$$
For every $N\trianglelefteq_{\mathrm{o}} G$, write $j_N$ for the smallest integer such that there exist $g_{N,1},\ldots,g_{N,j_N-1},g_{N,j_N+1},\ldots,g_{N,r}\in G$ 
such that
$$
\gamma_r(G)N/N=\{[g_{N,1},\ldots,g_{N,j_N-1},g,g_{N,j_N+1},\ldots,g_{N,r}]\mid g\in G\}.
$$
Note that the existence of $j_N$ is guaranteed by Theorem A.

Let $M$ be an open normal subgroup of $G$ for which $j_M$ is maximal in the set $\{j_N\mid N\trianglelefteq_{\mathrm{o}} G\}$.
We will prove that $j=j_M$ has the required property.
Indeed, take $N\trianglelefteq_{\mathrm{o}} G$ arbitrary and consider the intersection $N\cap M$, which is also open and normal in $G$.
Now, as $N\cap M\le M$, we have $j_M\le j_{N\cap M}$, and by maximality, it follows that $j_M=j_{N\cap M}$.
Again, since $N\cap M\le N$, we have
$$
\gamma_r(G)N/N=\{[g_{N,1},\ldots,g_{N,j_M-1},g,g_{N,j_M+1},\ldots,g_{N,r}]\mid g\in G\},
$$
and the claim is proved.

Now, for every $N\trianglelefteq_{\mathrm{o}} G$, write
\begin{align*}
X_N=\big\{(g_1,\ldots,g_{j-1}&,g_{j+1},\ldots,g_r)\in G\times\overset{r-1}{\ldots}\times G\mid\\ &\hspace{-4pt}\gamma_r(G)N/N=\{[g_1,\ldots,g_{j-1},g,g_{j+1},\ldots,g_r]N\mid g\in G\} \big\}.
\end{align*}
Clearly, the family $\{X_N\}_{N\trianglelefteq_{\mathrm{o}}G}$ has the finite intersection property, and since $G\times\overset{r-1}{\ldots}\times G$ is compact,
$$
\bigcap_{N\trianglelefteq_{\mathrm{o}} G} \, X_N \ne \varnothing.
$$
Thus, if $(g_1,\ldots,g_{j-1},g_{j+1},g_r)$ belongs to this intersection, write
$$
\mathcal{K}(G)=\{[g_1,\ldots,g_{j-1},g,g_{j+1},\ldots,g_r]\mid g\in G\},
$$
so that we have
$$
\gamma_r(G)N/N=\mathcal{K}(G)N/N
$$
for all $N\trianglelefteq_{\mathrm{o}} G.$

Now, note that $\mathcal{K}(G)$ is closed in $G$, being the image of a continuous function from $G$ to $G$.
Thus,


$$\gamma_r(G)=\bigcap_{N\trianglelefteq_{\mathrm{o}}G}\gamma_r(G)N=\bigcap_{N\trianglelefteq_{\mathrm{o}}G}\mathcal{K}(G)N=\Cl_{G}(\mathcal{K}(G))=\mathcal{K}(G)$$
and the proof is complete.
\end{proof}

\end{document}